\theoremstyle{plain}
\newtheorem{theorem}{Theorem}[section]
\newtheorem{prop}[theorem]{Proposition}
\newtheorem{lemma}[theorem]{Lemma}
\newtheorem{notation}[theorem]{Notation}
\newtheorem{observation}{Observation}
\newtheorem{cor}[theorem]{Corollary}
\newtheorem{defin}[theorem]{Definition}
\theoremstyle{definition}
\newtheorem{remark}[theorem]{Remark}
\newtheorem{cond}[theorem]{Condition} 
\newcommand{\PP}{\mathcal{P}}
\newcommand{\XX}{\mathscr{X}}
 \DeclareFontFamily{U}{wncy}{}
 \DeclareFontShape{U}{wncy}{m}{n}{<->wncyr10}{}
 \DeclareSymbolFont{mcy}{U}{wncy}{m}{n}
 \DeclareMathSymbol{\Sh}{\mathord}{mcy}{"58}
\begin{document}

    \title[Local-global principles]{Local-global principles for multinorm tori over semi-global fields}

	\author{Sumit Chandra Mishra}

	\address{Indian Institute of Science Education and Research Mohali, Knowledge City, Sector 81, Mohali 140 306, India}

	\email{sumitcmishra@gmail.com}

	\thanks{The author would like to acknowledge the support of IIT Bombay, India for the institute postdoctoral fellowship during this work. The author is currently an institute postdoctoral fellow at IISER Mohali, India.}
	\date{25-05-2023}
	\subjclass[2010]{11E72, 12G05, 14G05, 14H25, 20G15, 14G27}
	
	\keywords{local-global principles, multinorm tori, semi-global fields}

	\begin{abstract}
		Let $K$ be a complete discretely valued field with
the residue field $\kappa$. 
Assume that cohomological dimension of $\kappa$ is less than or equal to  $1$ (for example, $\kappa$ is an algebraically closed field  or a finite field). 
Let $F$ be the function field of a curve over $K$.
Let $n$ be a squarefree positive integer not divisible by char$(\kappa)$.
Then for any two degree $n$ abelian extensions,
we prove that the local-global principle holds for
the associated multinorm torus with respect to discrete valuations. 
Let $\XX$ be a regular proper model of $F$ such that
the reduced special fibre $X$ is a union of regular curves
with normal crossings. Suppose that $\kappa$ is algebraically closed with $char(\kappa)\neq 2$. If the graph associated to
$\XX$ is a tree (e.g. $F = K(t)$) then we show that the same local-global principle holds for the multinorm torus
associated to finitely many abelian extensions where
one of the extensions is quadratic and others are of degree not divisible by $4$.

	\end{abstract}

	\maketitle

\section{Introduction}

Let $F$ be a field and $\Omega_F$ be
the set of all discrete valuations on $F$.
For $\nu \in \Omega_F,$ let $F_\nu$ denote the completion of $F$ at $\nu$.
For a linear algebraic group $G$ over $F$,
one says that the {\it local-global principle} holds
for $G$ if
for any $G$-torsor $X$,
$X$ has a rational point over $F$
if it has a rational point over
$F_\nu$ for all $\nu \in \Omega_F$.
In case of number fields, we also consider
the completions at archimedean places
while discussing local-global
principles for algebraic groups.
The local-global principle for linear algebraic groups does not always hold.
Let $F$ be a semi-global field, i.e., function field of a curve over a complete
discretely valued field.
The first example of a linear algebraic group $G$ over a semi-global field
for which such a local-global principle fails was that of a multinorm torus given by
Colliot-Th\'el\`ene, Parimala and Suresh
(\cite[Section 3.1. \& Proposition 5.9.]{colliot2016lois}).

Let $L/F$ be an \'etale algebra. Then $\displaystyle L =  \prod_{i=1}^{m} L_i$, where $L_i/F$ are finite separable field extensions for $1 \leq i \leq m$.  We say that an element $\lambda \in F^{\times}$ is a norm from $L$ to $F$ if $\lambda$ is a product of norms from the extensions $L_i$ to $F$. Let  $T_{L/F}$ to denote the
multinorm torus associated to the extensions $L_i/F$. Saying that the local-global principle holds for the multinorm torus $T_{L/F}$ is equivalent to the statement that for every $\lambda \in F^{\times}$, $\lambda $ is a norm from $L$ to $F$ if and only if $\lambda $ is a norm from $L\otimes_F F_{\nu}$ to $F_{\nu}$ for all $\nu \in F_{\nu}$. 
In this article, we study local-global principles for multinorm tori
over semi-global fields.\\

In case of number fields, this has been
studied quite extensively, in particular by
Hürlimann (\cite{hurlimann1984algebraic}),
Colliot-Thélène and Sansuc (unpublished),
Platonov and Rapinchuk (\cite[Section 6.3.]{platonov1993algebraic}),
Prasad and Rapinchuk (\cite[Section 4]{prasad2008local}),
Pollio and Rapinchuk (\cite{pollio2012multinorm}),
Demarche and Wei (\cite{demarche2014hasse}),
Pollio (\cite{pollio2014multinorm}),
Bayer-Fluckiger, Lee and Parimala(\cite{bayer2019hasse}).
Multinorm tori are natural generalizations of norm one tori.
The local-global principle for norm one tori has been studied in \cite{mishra2021local}.\\

Let $K$  be a complete discretely valued field with residue field $\kappa$. 
Let $F$ be the function field of a curve over $K$. 
Let $p$ be a prime not equal to char$(\kappa)$ 
and $L/F$ be a degree $p$ cyclic extension. 
It follows from \cite[Theorem 4.3(ii)]{colliot2012patching} and \cite[Corollary 4.7.5]{philippe2006central} that local global principle holds for norm one torus associated to $L/F$. 
In this article, we have considered the case of multinorm tori 
associated to two separable extensions of degree $p$, and to a finite number of quadratic extensions. 
We have also obtained some results for more general multinorm tori.\\

We say that a field $\kappa$ satisfies Condition \ref{A} with respect to a prime $p$ if for all finite extensions $\kappa'/\kappa$ and any two degree $p$ cyclic extensions $l_1,l_2$ of $\kappa'$, $T_{l_1\times l_2/\kappa'}(\kappa')/R = \{1\}$. We note that if $\kappa$ is a field of cohomological dimension $\leq 1$, for example an algebraically closed field or a finite field, and of characteristic different from $p$ then $\kappa$ satisfies Condition \ref{A} with respect to $p$ (See Remark \ref{cd_leq_one}).\\

We prove the following results$\colon$

\begin{theorem}[Theorem \ref{lgp_two_extenions_of_same_degree_squarefree_degree}]

Let $K$ be a complete discretely valued field
with residue field $\kappa.$
Let $F$ be the function field of a curve over $K$.
Let $L_1$ and $L_2$ be two abelian extensions of $F$ of degree $n$,
where $n$ is a squarefree integer not divisible by $\text{char}(\kappa)$. Let $L= L_1 \times L_2.$
If $\kappa$ satisfies Condition \ref{A} with respect to all $p$ dividing $n$, then the local-global principle holds for the multinorm torus $T_{L/F}$ with respect to discrete valuations.

\end{theorem}

\begin{theorem}[Theorem \ref{lgp_multinorm_p_general_case}]

Let $K$ be a complete discretely valued field
with residue field $\kappa.$
Let $F$ be the function field of a curve over $K$.
Let $L_1$ be a degree $p$ separable extension of $F$.
Let $L_2$ be a Galois extension of $F$ of degree $n$ with Galois group $G$.
Assume that $p^2$ does not divide $n$.
If $p$ divides $n$ then also assume that $G$ has an index $p$ subgroup $H$.
Assume that  $\text{char}(\kappa)$ does not divide $pn$.
Let $L= L_1 \times L_2.$
If $\kappa$ satisfies Condition \ref{A} with respect to $p$, then the local-global principle holds for $T_{L/F}$ with respect to discrete valuations.

\end{theorem}

Let $\XX$ be a regular proper model of $F$
such that the reduced special fibre $X$ is a union of
regular curves with normal crossings. 
Now we consider product of norms from more than two extensions. 
There exist examples of three quadratic extensions over
semi-global fields such that the associated multinorm torus does not satisfy
local-global principle (please see \cite{colliot2016lois}). 
However these examples were
over semi-global field $F$ with graph associated to $\XX$ not a tree.
Assuming that the graph associated to $\XX$ is a tree,
we prove the following$\colon$

\begin{theorem}[Theorem \ref{Sha_dvr_vanishes_for_multinorm}]
Let $K$ be a complete discretely valued field
with residue field $\kappa$ algebraically closed.
Let $F$ be the function field of a curve over $K$. 
Let $\XX$ be a regular proper model of $F$
such that the reduced special fibre $X$ is a union of
regular curves with normal crossings. 
Let $L_i/F, 1\leq i \leq m,$ be
quadratic extensions.
Assume that $\text{char}(\kappa) \neq 2$.
Let $\displaystyle L = \prod_{i=1}^{m} L_i$.
If the graph associated to $\XX$ is a tree
then the local-global principle holds for the multinorm torus $T_{L/F}$
with respect to discrete valuations.

\end{theorem}

As a corollary, we obtain the following result$\colon$

\begin{cor}[Theorem \ref{Sha_dvr_vanishes_for_multinorm_quadratic_general}]
Let $K$ be a complete discretely  valued field with residue field $\kappa$.
Assume that $\kappa$ is algebraically closed. 
Let $F$ be function field of a curve over $K$. 
Let $\XX$ be a regular proper model of $F$
such the reduced special fibre $X$ is a union of
regular curves with normal crossings.
Let $L_0$ be a quadratic extension of $F$.
For $i= 1, 2, \dots, m$, let $L_i$ be a Galois extension of
$F$ of degree $n_i$ with Galois group $G_i$.
Assume that for $i$, $1\leq i \leq m$, $4$ does not divide $n_i$.
Assume that $2$ and $n_i$ are not divisible by \text{char}$(\kappa)$.
Let $\displaystyle L = \prod^{m}_{i=0} L_i.$
If the graph associated to $\XX$ is a tree then the local-global principle holds for $T_{L/F}.$

\end{cor}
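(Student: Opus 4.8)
The plan is to prove that $\Sh^1(F,T_{L/F})$, the kernel of $H^1(F,T_{L/F})\to\prod_{\nu\in\Omega_F}H^1(F_\nu,T_{L/F})$, vanishes: since $H^1(F,T_{L/F})$ classifies $T_{L/F}$-torsors and such a torsor has an $F$-point precisely when its class is trivial, this is exactly the assertion of the corollary, and it lets us reduce to the all-quadratic case covered by Theorem~\ref{Sha_dvr_vanishes_for_multinorm}.

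\textbf{Step 1: a reduction principle.} First I would record the following: \emph{if $L=\prod_j L_j$ and some factor $L_{j_0}/F$ is Galois, then $\Sh^1(F,T_{L/F})$ is annihilated by $[L_{j_0}:F]$.} Indeed $T_{L/F}\times_F L_{j_0}\cong T_{(L\otimes_F L_{j_0})/L_{j_0}}$, and since $L_{j_0}/F$ is Galois the algebra $L_{j_0}\otimes_F L_{j_0}$ is a product of copies of $L_{j_0}$; hence among the fields defining $T_{(L\otimes_F L_{j_0})/L_{j_0}}$ there appears a copy of $L_{j_0}$ itself, which makes that torus quasi-trivial, so $\Sh^1\big(L_{j_0},T_{(L\otimes_F L_{j_0})/L_{j_0}}\big)=0$ by Hilbert~90. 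Since $\mathrm{res}_{L_{j_0}/F}$ and $\mathrm{cor}_{L_{j_0}/F}$ carry $\Sh^1$ into $\Sh^1$ (every nontrivial discrete valuation of a finite extension of $F$ restricts to a nontrivial discrete valuation of $F$) and $\mathrm{cor}_{L_{j_0}/F}\circ\mathrm{res}_{L_{j_0}/F}=[L_{j_0}:F]$, the claim follows. Applying this to $L_0$, which is Galois since it is quadratic and $\text{char}(\kappa)\neq2$, shows $\Sh^1(F,T_{L/F})$ is $2$-primary. If some $n_i$ is odd, applying it to that $L_i$ shows $\Sh^1(F,T_{L/F})$ is also killed by an odd integer, hence zero; so we may assume every $n_i$ is even, say $n_i=2m_i$ with $m_i$ odd.

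\textbf{Step 2: passing to quadratic subfields.} For each $i$ I would set $E_i=L_i^{H_i}$, a quadratic extension of $F$ inside $L_i$ with $[L_i:E_i]=m_i$ odd, and let $L^{(i)}$ be $L$ with the factor $L_i$ replaced by $E_i$. From $N_{L_i/F}=N_{E_i/F}\circ N_{L_i/E_i}$ one obtains a surjection of tori $\phi_i\colon T_{L/F}\to T_{L^{(i)}/F}$ (apply $N_{L_i/E_i}$ in the $i$-th coordinate, the identity elsewhere), whose kernel is $R_{E_i/F}\big(R^{1}_{L_i/E_i}\Gm\big)$, the Weil restriction of the norm-one torus of $L_i/E_i$. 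Because $m_i$ is odd, the inclusion $E_i\hookrightarrow L_i$ combined with the $m_i$-th power map on the remaining coordinates defines a morphism $\psi_i\colon T_{L^{(i)}/F}\to T_{L/F}$ with $\phi_i\circ\psi_i=[m_i]$. Now $H^1\big(F,R_{E_i/F}(R^{1}_{L_i/E_i}\Gm)\big)\cong H^1\big(E_i,R^{1}_{L_i/E_i}\Gm\big)\cong E_i^{\times}/N_{L_i/E_i}(L_i^{\times})$ is killed by $m_i$ (since $a^{m_i}=N_{L_i/E_i}(a)$ for $a\in E_i^{\times}$), so it has no $2$-torsion. Combining this with the exact sequence $1\to R_{E_i/F}(R^{1}_{L_i/E_i}\Gm)\to T_{L/F}\xrightarrow{\phi_i}T_{L^{(i)}/F}\to1$ shows that $\phi_{i,*}$ is injective on the $2$-primary part of $H^1$, while $\psi_i$ shows it is surjective onto it; since $\phi_{i,*}$ and $\psi_{i,*}$ respect $\Sh^1$, this gives an isomorphism $\Sh^1(F,T_{L/F})\{2\}\;\xrightarrow{\ \sim\ }\;\Sh^1(F,T_{L^{(i)}/F})\{2\}$.

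\textbf{Step 3: conclusion.} Iterating Step~2 over $i=1,\dots,m$ replaces $L$ by $L^{\mathrm{fin}}=L_0\times\prod_{i=1}^{m}E_i$, a product of quadratic extensions of $F$, with $\Sh^1(F,T_{L/F})\{2\}\cong\Sh^1(F,T_{L^{\mathrm{fin}}/F})\{2\}$. By Step~1 applied to $L_0$ (still a factor of $L^{\mathrm{fin}}$) both of these equal the whole of $\Sh^1$, so $\Sh^1(F,T_{L/F})\cong\Sh^1(F,T_{L^{\mathrm{fin}}/F})$, which is zero by Theorem~\ref{Sha_dvr_vanishes_for_multinorm} because $\kappa$ is algebraically closed with $\text{char}(\kappa)\neq2$ and the graph associated to $F$ is a tree. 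Hence $\Sh^1(F,T_{L/F})=0$ and the local-global principle holds for $T_{L/F}$. I expect the main obstacle to be the two constructions in Step~2 --- the exact sequence with kernel $R_{E_i/F}(R^{1}_{L_i/E_i}\Gm)$ and the morphism $\psi_i$ with $\phi_i\circ\psi_i=[m_i]$ --- together with checking that restriction and corestriction interact correctly with $\Sh^1$ over semi-global fields; the remainder is formal.
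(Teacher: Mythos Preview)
Your argument is correct and reaches the same endpoint as the paper --- reducing to the all-quadratic case and invoking Theorem~\ref{Sha_dvr_vanishes_for_multinorm} --- but by a genuinely different route. The paper works directly with the description $H^1(F,T_{L/F})\cong F^{\times}/\prod_i N_{L_i/F}(L_i^{\times})$: given $\alpha$ locally a product of norms from $L$, it passes to the quadratic subfields $L_i'=L_i^{H_i}$, applies Theorem~\ref{Sha_dvr_vanishes_for_multinorm} to write $\alpha$ as a product of norms from $L_0\times\prod L_i'$, and deduces that $\alpha^{r}$ (with $r=\mathrm{lcm}(r_i)$ odd) is a product of norms from $L$; separately, using odd-index subfields $L_i''$, it shows $\alpha^{2}$ is a product of norms from $L$, and concludes by $\gcd(2,r)=1$. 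Your approach is more structural: you use restriction--corestriction along the Galois factor $L_0$ to pin down the $2$-primary part, and then the exact sequence with kernel $R_{E_i/F}(R^{1}_{L_i/E_i}\Gm)$ together with the section-up-to-$[m_i]$ map $\psi_i$ to transport $\Sh^{1}\{2\}$ to the all-quadratic torus. The paper's argument is shorter and entirely elementary (no exact sequences, no Shapiro, no res--cor); yours is more conceptual and would adapt more readily to other primes or to situations where the explicit norm description is less convenient. The one place you should be slightly more careful is the compatibility of corestriction with $\Sh^{1}$ over all discrete valuations (you need that $E\otimes_F F_\nu\cong\prod_{w\mid\nu}E_w$ identifies the local corestriction with the sum of the $\mathrm{cor}_{E_w/F_\nu}$), but this is standard.
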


As in \cite{mishra2021local}, we use the field patching methods developed by
Harbater, Hartmann and Krashen (\cite{harbater2015local}).
We now mention the plan of this article.
In Section \ref{Prelims}, we mention basic facts and prove lemmas about $R$-equivalence for
multinorm tori.
In Section \ref{Semi-global fields and Patching}, we discuss the semi-global fields, their overfields and their models and their associated graph. We recall some theorems from \cite{harbater2015local} and \cite{colliot2020local} which will be used in the proofs.
In Section \ref{Sha}, we study the relationship between the two obstructions, one coming from all discrete
valuations and one coming from the field patching setup. We show that just as in the norm one tori case, our problem
reduces to studying the obstructions coming from the patching set up. Finally, in the Section \ref{LGP_1} and Section \ref{LGP_2}, we the prove
the local-global results as discussed above.

\section{Preliminaries}
\label{Prelims}

 Let $F$ be a field and $\mathbb{G}_m$ be the multiplicative group. 
 For a finite extension $E/F$, let $R_{E/F}\mathbb{G}_m$ denote the Weil restriction of $\mathbb{G}_m$ (considered as an algebraic group over $E$). 
 Let $L_i,$ $1 \leq i \leq m$, be finite separable extensions of $F.$ 
 Let $\displaystyle L= \prod^m_{i=1} L_i$. 
We define the multinorm torus associated to the extensions $L_i/F$ (or associated to $L/F$), denoted by $T_{L/F}$, as the kernel of the map $$\displaystyle \prod^{m}_{i=1} R_{L_i/F} \mathbb{G}_m \xrightarrow{\displaystyle \prod^{m}_{i=1}N_{L_i/F}} \mathbb{G}_m,$$ 
where $N_{L_i/F}$ is the map induced by the usual norm map $N_{L_i/F}: L^{\times}_i \rightarrow F^{\times}$ for $1 \leq i \leq m$.\\

Thus we have the following short exact sequence:\\
$$\displaystyle 
1 \rightarrow 
T_{L/F} \rightarrow 
\left(\prod_{i=1}^{m}R_{L_i/F} \mathbb{G}_m \right) 
\xrightarrow{\displaystyle \prod_{i=1}^{m}N_{L_i/F}} 
\mathbb{G}_m 
\rightarrow 1.$$

By taking Galois cohomology, we get$\colon$\\

$$\displaystyle 
1 \rightarrow 
T_{L/F}(F) \rightarrow 
\left(\prod_{i=1}^{m}(L^{\times}_i)  \right) 
\xrightarrow{\displaystyle \prod_{i=1}^{m}N_{L_i/F}} 
F^{\times} 
\rightarrow H^1(F,T_{L/F}) \rightarrow 1$$
since $H^1(F,R_{L/F}\mathbb{G}_m) = \{ 1\}$ 
(by Hilbert 90 and Shapiro's lemma).

Hence $\displaystyle H^1(F,T_{L/F}) 
\simeq  F^{\times}/ \left( \prod_{i=1}^{m}N_{L_i/F}(L^{\times}_i)\right).$
Thus, the isomorphism classes of $T_{L/F}$-torsors 
correspond to the equivalence classes of 
$\displaystyle F^{\times}/\left( \prod_{i=1}^{m}N_{L_i/F}(L^{\times}_i)\right),$ with 
the trivial torsor corresponding to 
the equivalence class of $1$. 
\\

The rational points of $T_{L/F}$ over $F$ are given by 
$$T_{L/F}(F)= \{ (a_1, \dots , a_m) \in \prod^{m}_{i=1} L_i^{\times}\; |\; \prod^{m}_{i=1} N_{L_i/F}(a_i) =1 \}.$$

If $L/F$ is a finite separable extension, then $T_{L/F}$ is just the norm one torus associated to $L/F$.\\

\subsection{R-equivalence}

\label{Requivalence}

We refer the reader to \cite{colliot1977r}
and \cite{colliot1987principal} for more details
about $R$-equivalence.

Let $X$ be a variety over a field $F.$
 For a field extension $L$ of $F,$
 let $X(L)$ be the set of $L$-points of $X.$
 We say that two points $x_0, x_1 \in X(L)$ are
 {\it elementary $R$-equivalent}, denoted by
 $x_0 \sim x_1,$ if there is a rational map
 $f \colon \mathbb{P}^1(L) \dashrightarrow X(L)$
 such that $f(0) = x_0$ and $f(1) = x_1.$
 The equivalence relation generated by $\sim$
  is called {\it $R$-equivalence}.
 When $X = G$ is an algebraic group defined
 over $F$ with the identity element $e,$
 we define $RG(L) = $ $\{$ $ x \in G(L) \mid$
 $x$ is $R$-equivalent to $e \}.$
 The elements of $RG(L)$ are called
 $R$-trivial elements. It is well-known
 that $RG(L)$ is a normal subgroup of
 $G(L)$ (cf. \cite[p-1]{gille2010lectures}). 
 Sometimes, we denote
 $G(L)/RG(L)$ by $G(L)/R$.
 Let $L/F$ be a Galois extension
 with Galois group $G,$
 and $T_{L/F}$ be the norm $1$
 torus associated to the extension $L/F.$
 Then for any extension $N/F,$  $RT_{L/F}(N)$
 is the subgroup generated by the set
 $\{ a^{-1}\sigma (a) \mid
 a \in (L\otimes_F N)^{\times}, \sigma \in G \}$
(\cite[Proposition 15]{colliot1977r}). \\

We note a well-known fact here:\\

\begin{lemma}
\label{R_trivial_elements_cyclic_norm_one_tori}
Let $F$ be a field and $L/F$ be a
finite cyclic extension.
Then $T_{L/F}(F)=RT_{L/F}(F).$

\end{lemma}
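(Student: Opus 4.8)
The plan is to deduce the statement directly from Hilbert's Theorem 90 together with the explicit description of $RT_{L/F}(F)$ recalled just above the lemma. Write $G = \mathrm{Gal}(L/F) = \langle \sigma \rangle$, cyclic of order $n = [L:F]$. Since $L \otimes_F F = L$, the group $T_{L/F}(F)$ is identified with $\{a \in L^\times \mid N_{L/F}(a) = 1\}$, the kernel of the norm map $N_{L/F} \colon L^\times \to F^\times$, so the claim is that every norm-one element of $L^\times$ is $R$-trivial.

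First I would invoke the cyclic form of Hilbert's Theorem 90: since $H^1(G, L^\times) = 0$ and $G$ is cyclic, Tate cohomology is periodic of period $2$, so $\widehat{H}^{-1}(G, L^\times) = 0$; concretely this says that every $a \in L^\times$ with $N_{L/F}(a) = 1$ can be written as $a = b^{-1}\sigma(b)$ for some $b \in L^\times$. (Equivalently, $\ker N_{L/F}$ equals the image of the endomorphism $b \mapsto b^{-1}\sigma(b)$ of $L^\times$; this image is already a subgroup because $L^\times$ is abelian.) Then I would combine this with the description recalled in the excerpt, namely that $RT_{L/F}(F)$ is the subgroup of $T_{L/F}(F)$ generated by $\{a^{-1}\tau(a) \mid a \in L^\times,\ \tau \in G\}$. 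Each element $b^{-1}\sigma(b)$ is one of these generators, hence lies in $RT_{L/F}(F)$; by Hilbert 90 every element of $T_{L/F}(F)$ has this form, so $T_{L/F}(F) \subseteq RT_{L/F}(F)$. The reverse inclusion $RT_{L/F}(F) \subseteq T_{L/F}(F)$ is immediate since $RG(N) \subseteq G(N)$ always. This yields $T_{L/F}(F) = RT_{L/F}(F)$.

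There is essentially no obstacle here: the only points requiring care are the identification $T_{L/F}(F) = \ker N_{L/F}$ and the passage from the vanishing of $H^1$ to the vanishing of $\widehat{H}^{-1}$ via the periodicity of the cohomology of a cyclic group, both of which are standard. I note that the same argument in fact shows more generally that $T_{L/F}(N) = RT_{L/F}(N)$ for every field extension $N/F$ for which $L \otimes_F N$ is a field (and, with a little extra bookkeeping, when it is a product of copies of a cyclic extension of $N$), but only the stated case $N = F$ is needed in what follows.
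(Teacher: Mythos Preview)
Your argument is correct: Hilbert's Theorem 90 gives every norm-one element the form $b^{-1}\sigma(b)$, which is one of the generators of $RT_{L/F}(F)$ in the description recalled just before the lemma. The paper does not actually prove this lemma but simply refers to Gille's lecture notes; your proof is the standard unfolding of that reference, so the approaches coincide.
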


\begin{proof}
Please see \cite[Section 2, p-2]{gille2010lectures}.

\end{proof}

Now we discuss few basic results about
$R$-equivalence on norm one tori
and multinorm tori. We start with a generalization of \cite[Lemma 1.1(i)]{bayer2019hasse}$\colon$ \\

\begin{prop}
\label{only_nonisomorphic_fields_matter_multinorm_mod_R}
Let $F$ be a field and let
$L_1,L_2, \dots, L_m$ be finite
separable extensions of $F$.
Let $\displaystyle L=\prod_{i=1}^{m} L_i$.
Let $r_{i}$ be positive integers for
$i$, $1 \leq i \leq m.$ Let
$\displaystyle L' = \prod_{i=1}^{m}
L_i^{r_i}$.
Then $T_{L'/F}(F)/R  \simeq T_{L/F}(F)/R.$

\end{prop}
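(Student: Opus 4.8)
The plan is to reduce the general statement to the case where all but one of the repetition factors $r_i$ equals $1$, and then to a single "doubling" step $L_i \rightsquigarrow L_i \times L_i$, which one can analyze directly. More precisely, writing things out one factor at a time, it suffices to show that for a fixed index, say $i=1$, and $L' = (\prod^{r_1} L_1) \times L_2 \times \cdots \times L_m$, we have $T_{L/F}(F)/R \simeq T_{L'/F}(F)/R$; iterating over $i$ and over the number of copies then gives the full claim. So the heart of the matter is the identity
\[
T_{(\prod^{r}L_1)\times L_2\times\cdots\times L_m / F}(F)/R \;\simeq\; T_{L_1\times L_2\times\cdots\times L_m/F}(F)/R,
\]
and by an immediate induction on $r$ it is enough to treat $r=2$, i.e.\ to compare the multinorm torus for $(L_1,L_1,L_2,\dots,L_m)$ with the one for $(L_1,L_2,\dots,L_m)$.

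For this I would argue at the level of exact sequences. Recall $T_{L/F}$ sits in $1 \to T_{L/F} \to R_{L/F}\Gm \xrightarrow{N} \Gm \to 1$ where $N$ is the product of the norm maps $N_{L_i/F}$, and similarly for $L'$. The diagonal inclusion $F \hookrightarrow L_1$ (landing in the first of the two copies, say) together with the identity on the remaining factors induces a map $R_{L/F}\Gm \to R_{L'/F}\Gm$; composing with the splitting $R_{L_1/F}\Gm \to R_{L_1\times L_1/F}\Gm$ built from the two projections, one checks the norm maps are compatible, so there is an induced homomorphism $\varphi\colon T_{L/F}\to T_{L'/F}$, and in the other direction a homomorphism $\psi\colon T_{L'/F}\to T_{L/F}$ coming from collapsing the two copies of $L_1$ via the multiplication $L_1\times L_1 \to L_1$ on coordinates (more precisely, $(a,b,c_2,\dots)\mapsto (ab,c_2,\dots)$), which sends the norm-one condition to the norm-one condition because $N_{L_1/F}(ab)=N_{L_1/F}(a)N_{L_1/F}(b)$. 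One then shows $\psi\circ\varphi = \mathrm{id}$ on $F$-points, so on $F$-points $T_{L/F}(F)$ is a direct factor of $T_{L'/F}(F)$; the complementary factor is the kernel of $\psi$ on $F$-points, which is $\{(a,a^{-1},1,\dots,1): a\in L_1^\times,\ N_{L_1/F}(a)=1\}$ — visibly a copy of $T_{L_1/F}(F)$ sitting inside $T_{L'/F}(F)$ via a "difference" embedding that is manifestly $R$-trivial (it is the image of $\mathbb{G}_m(L_1)\ni a \mapsto (a,a^{-1},1,\dots)$, a rational parametrization through the identity). Hence passing to $R$-equivalence kills this factor and $\varphi$ induces the desired isomorphism $T_{L/F}(F)/R \simeq T_{L'/F}(F)/R$.

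The main obstacle I anticipate is not the construction of $\varphi$ and $\psi$ but the bookkeeping needed to verify that the complementary factor $\ker(\psi)(F)$ is genuinely contained in $RT_{L'/F}(F)$, and conversely that $\varphi$ does not collapse anything it shouldn't modulo $R$ — i.e.\ that $\varphi^{-1}(RT_{L'/F}(F)) = RT_{L/F}(F)$. For the first point the cited description of $R$-trivial elements as generated by $a^{-1}\sigma(a)$ (for Galois $L_i$) — or, in the general separable case, the generalization of \cite[Lemma 1.1(i)]{bayer2019hasse} referred to just before the statement — should apply directly once one exhibits the relevant rational curve $t\mapsto(t\text{-scaling of one copy},\ t^{-1}\text{-scaling of the other})$. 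For the surjectivity/injectivity-mod-$R$ of $\varphi$ one uses that $\psi$ splits it, so it comes down to checking $\psi(RT_{L'/F}(F)) \subseteq RT_{L/F}(F)$, which is automatic since $\psi$ is a morphism of algebraic groups and hence sends $R$-trivial points to $R$-trivial points. Assembling these, the induction on $r$ and on the index $i$ closes the argument.
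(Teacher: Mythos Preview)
Your approach is correct in essence and coincides with the paper's once a small slip is fixed. The error is in your description of $\ker(\psi)$: the condition $N_{L_1/F}(a)=1$ is superfluous, since for $(a,a^{-1},1,\dots,1)$ the product of norms is $N_{L_1/F}(a)N_{L_1/F}(a^{-1})=1$ automatically. Thus $\ker(\psi)\simeq R_{L_1/F}\mathbb{G}_m$, not $T_{L_1/F}$. This matters, because $T_{L_1/F}$ need not be rational, so calling it ``the image of $\mathbb{G}_m(L_1)$ under a rational parametrization through the identity'' would be inconsistent if the norm constraint were actually present. With the correct identification, $R_{L_1/F}\mathbb{G}_m$ is rational and its image in $T_{L'/F}$ lies in $RT_{L'/F}(F)$, exactly as you intend.

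The paper dispenses with the induction and the $\varphi,\psi$ bookkeeping by doing everything in one stroke: the change of coordinates on $\prod_i (R_{L_i/F}\mathbb{G}_m)^{r_i}$ that replaces the last entry in each block by the product of all entries in that block is an automorphism of that ambient torus, and it carries $T_{L'/F}$ isomorphically onto $T_{L/F}\times\prod_i (R_{L_i/F}\mathbb{G}_m)^{r_i-1}$ as algebraic $F$-groups. Rationality of the Weil restrictions plus the additive property of $R$-equivalence under products then finishes immediately. In particular, your worry about whether $\varphi^{-1}(RT_{L'/F}(F))=RT_{L/F}(F)$ dissolves once one observes that $\psi\circ\varphi=\mathrm{id}$ already holds at the level of algebraic $F$-groups, giving $T_{L'/F}\simeq T_{L/F}\times\ker\psi$ as $F$-groups; the additive property applies directly and no pointwise bookkeeping is needed.
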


\begin{proof} Let $\phi$ be the automorphism of $\displaystyle \prod^{m}_{i=1} (R_{L_i/F}\mathbb{G}_m)^{r_i}$ which is determined by mapping $$((a_{1,1}, a_{1,2}, \dots , a_{1,r_1-1},  a_{1,r_1}), \dots , (a_{m,1},\dots, a_{m,r_m-1}, a_{m,r_m}))$$ to $$
((a_{1,1}, a_{1,2}, \dots , a_{1,r_1-1},  \prod^{r_1}_{i=1} a_{1,i}),\dots, (a_{m,1},\dots, a_{m,r_m-1}, \prod^{r_m}_{i=1}a_{m,i})).$$ In the above map, it is understood that
if $r_j=1$ for some $j$ then $$(a_{j,1},\dots, a_{j,r_j-1}, \prod^{r_j}_{i=1}a_{j,i}) = (a_{j,1}).$$

Consider the following commutative diagram with exact rows: \\

\xymatrixcolsep{4.5pc}\xymatrix{1
\ar[r] 
&T_{L'/F} 
\ar[d]^{\phi|_{T_{L'/F}}} 
\ar[r] 
&\displaystyle \prod^{m}_{i=1} (R_{L_i/F} \mathbb{G}_m)^{r_i} 
\ar[d]^{\phi} 
\ar[r]^{\mu_1} 
&\mathbb{G}_m 
\ar[d]^{||} 
\ar[r]
&1 \\
1 
\ar[r] 
&\displaystyle \prod^{m}_{i=1} (R_{L_i/F} \mathbb{G}_m)^{r_i-1} \times T_{L/F} 
\ar[r] 
&\displaystyle \prod^{m}_{i=1}(R_{L_i/F} \mathbb{G}_m)^{r_i} 
\ar[r]^{\mu_2} 
&\mathbb{G}_m 
\ar[r] 
&1}
where 
the map $\mu_1$ and $\mu_2$ are the maps given by mapping $$((a_{1,1}, a_{1,2}, \dots , a_{1,r_1-1},  a_{1,r_1}), \dots , (a_{m,1},\dots, a_{m,r_m-1}, a_{m,r_m}))$$ to 
$\displaystyle \prod^{m}_{i=1} N_{L_i/F}(a_{i,1}\cdots a_{i,r_i})$ and $\displaystyle \prod^{m}_{i=1} N_{L_i/F}(a_{i,r_i})$, respectively. Then $\phi_{T_{L'/F}}$ is an isomorphism. Since $R_{L_i/F} \mathbb{G}_m$ are rational, $R_{L_i/F}(\mathbb{G}_m)/R = \{1\}$ by \cite[Corollary 1.6.]{gille2010lectures}. Now the result follows by additive property of $R$-equivalence (see \cite[Property(1),p-1]{gille2010lectures}). 
\end{proof}

\begin{lemma}
\label{torus_F_times_separable_algebra_is _R_trivial}
Let $F$ be a field and $L_i, 1\leq i \leq m,$ be
finite separable extensions of $F$ and let
$L = F \times \prod_{i=1}^{m}L_i$.
Then $T_{L/F}(F) = RT_{L/F}(F)$.

\end{lemma}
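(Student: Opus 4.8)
The plan is to identify $T_{L/F}$ with an honest Weil restriction torus and then quote the rationality of such tori. Write $L = F \times L'$ with $L' = \prod_{i=1}^{m} L_i$, so that $R_{L/F}\mathbb{G}_m = \mathbb{G}_m \times R_{L'/F}\mathbb{G}_m$ and, since $N_{F/F} = \mathrm{id}$, the norm map $N_{L/F}$ sends a point $(a,b)$ to $a\cdot N_{L'/F}(b)$. Hence $T_{L/F} = \ker N_{L/F}$ is exactly the graph $\{\,(N_{L'/F}(b)^{-1},\, b)\,\}$, and I claim the second projection is an isomorphism of $F$-group schemes $T_{L/F} \xrightarrow{\ \sim\ } R_{L'/F}\mathbb{G}_m$: its inverse is $b \mapsto (N_{L'/F}(b)^{-1}, b)$, which is a morphism because it is assembled from $N_{L'/F}$, inversion on $\mathbb{G}_m$, and the diagonal, and one checks directly that both composites are the identity and that both maps are multiplicative. (Equivalently, the inclusion of the $F$-factor of $L$ provides a section $\mathbb{G}_m \to R_{L/F}\mathbb{G}_m$ of $N_{L/F}$, splitting the exact sequence $1 \to T_{L/F} \to R_{L/F}\mathbb{G}_m \to \mathbb{G}_m \to 1$, so $R_{L/F}\mathbb{G}_m \simeq T_{L/F} \times \mathbb{G}_m$, after which one could instead invoke the additivity of $R$-equivalence together with the rationality of $R_{L/F}\mathbb{G}_m$.)

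Once this identification is in place the rest is immediate. Each $R_{L_i/F}\mathbb{G}_m$ is a nonempty open subscheme of the affine space $R_{L_i/F}\mathbb{A}^1 \simeq \mathbb{A}^{[L_i:F]}_F$, hence rational, and a finite product of rational varieties is rational; so $R_{L'/F}\mathbb{G}_m \simeq \prod_{i=1}^{m} R_{L_i/F}\mathbb{G}_m$ is rational. By \cite[Corollary 1.6.]{gille2010lectures} a rational connected linear algebraic $F$-group $G$ satisfies $G(F) = RG(F)$; applying this to $R_{L'/F}\mathbb{G}_m$ and transporting the conclusion along the isomorphism above gives $T_{L/F}(F) = RT_{L/F}(F)$.

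There is no genuinely hard step here. The one point that deserves care is checking that $b \mapsto (N_{L'/F}(b)^{-1}, b)$ is a morphism of $F$-group schemes and not merely a bijection on $F$-rational points, since it is the scheme-theoretic isomorphism — not just a bijection on points — that licenses transporting $R$-triviality. I should also keep in mind that $L'$ is in general only an étale $F$-algebra rather than a field, but Weil restriction commutes with finite products, so $R_{L'/F}\mathbb{G}_m$ really does decompose as the displayed product of the rational tori $R_{L_i/F}\mathbb{G}_m$ and the argument goes through verbatim.
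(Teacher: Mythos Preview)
Your proof is correct and essentially identical to the paper's: the map $b \mapsto (N_{L'/F}(b)^{-1}, b)$ you write down is precisely the isomorphism $f\colon \prod_{i=1}^{m} R_{L_i/F}\mathbb{G}_m \to T_{L/F}$, $(a_1,\dots,a_m) \mapsto (\prod_i N_{L_i/F}(a_i)^{-1}, a_1,\dots,a_m)$, that the paper uses, after identifying $R_{L'/F}\mathbb{G}_m$ with $\prod_i R_{L_i/F}\mathbb{G}_m$. Both arguments then conclude by observing that this torus is rational and invoking the fact that rational groups are $R$-trivial.
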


\begin{proof}

Let us consider the map $\displaystyle f \colon
\prod_{i=1}^{m} R_{L_i/F}\mathbb{G}_m
\to T_{L/F}$ which sends
$(a_1, \dots, a_m)$ to $\displaystyle
( \prod_{i=1}^{m} N_{L_i/F}(a_i)^{-1}, a_1, \dots, a_m)$.
Then $f$ is an isomorphism of algebraic groups.
Since $R_{L_i/F}(\mathbb{G}_m)$ are rational, $T_{L/F}$ is rational. Hence
$T_{L/F}(F) = RT_{L/F}(F)$ by
\cite[p.1-2]{gille2010lectures}.

\end{proof}

\begin{lemma}
\label{nthpower}
 Let $L/F$ be a finite Galois extension
 of degree $n$ and $N/F$ be any field extension.
 If $\alpha \in (L \otimes_F N)^{\times},$
 then $ (N_{L \otimes_F N/N }(\alpha)^{-1}) \alpha^n
 \in RT_{L/F}(N).$
\end{lemma}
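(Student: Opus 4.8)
The plan is to reduce the statement directly to the explicit description of $R$-triviality recalled above. Write $G = \mathrm{Gal}(L/F)$, so that $|G| = n$, and let $G$ act on $L \otimes_F N$ through the first tensor factor. By \cite[Proposition 15]{colliot1977r}, $RT_{L/F}(N)$ is the subgroup of $(L \otimes_F N)^{\times}$ generated by the elements $a^{-1}\sigma(a)$ with $a \in (L \otimes_F N)^{\times}$ and $\sigma \in G$. So it suffices to exhibit $N_{L\otimes_F N/N}(\alpha)^{-1}\alpha^{n}$ as a product of such generators.

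First I would use the standard formula for the norm in a Galois extension, valid also after the base change to $N$: for $\alpha \in (L\otimes_F N)^{\times}$ one has $N_{L\otimes_F N/N}(\alpha) = \prod_{\sigma \in G}\sigma(\alpha)$. Since there are exactly $n = |G|$ factors in this product and $(L\otimes_F N)^{\times}$ is commutative,
$$N_{L\otimes_F N/N}(\alpha)^{-1}\,\alpha^{n} \;=\; \Big(\prod_{\sigma \in G}\sigma(\alpha)\Big)^{-1}\alpha^{|G|} \;=\; \prod_{\sigma \in G}\big(\alpha\,\sigma(\alpha)^{-1}\big).$$
Putting $a = \alpha^{-1} \in (L\otimes_F N)^{\times}$, each factor equals $\alpha\,\sigma(\alpha)^{-1} = a^{-1}\sigma(a)$, which is one of the generators of $RT_{L/F}(N)$. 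Hence the product lies in $RT_{L/F}(N)$, as claimed. In particular this also re-proves that $N_{L\otimes_F N/N}(\alpha)^{-1}\alpha^{n} \in T_{L/F}(N)$, since $RT_{L/F}(N) \subseteq T_{L/F}(N)$.

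There is no genuine obstacle here; the computation is essentially forced. The one point worth isolating is the bookkeeping step that the single expression $N_{L\otimes_F N/N}(\alpha)^{-1}\alpha^{n}$ splits into exactly $|G|$ factors of the shape $a^{-1}\sigma(a)$ --- this is precisely why the exponent in the statement is $n = [L:F]$ and not something else --- after which membership in $RT_{L/F}(N)$ is immediate from its generating set.
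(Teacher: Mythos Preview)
Your proof is correct and follows essentially the same route as the paper: expand the norm as $\prod_{\sigma\in G}\sigma(\alpha)$, rewrite $N_{L\otimes_F N/N}(\alpha)^{-1}\alpha^{n}$ as $\prod_{\sigma\in G}\alpha\,\sigma(\alpha)^{-1}$, and observe each factor lies in $RT_{L/F}(N)$ by \cite[Proposition~15]{colliot1977r}. The only cosmetic difference is that you substitute $a=\alpha^{-1}$ to match the generator $a^{-1}\sigma(a)$ exactly, whereas the paper invokes the proposition directly for $\sigma(\alpha)^{-1}\alpha$.
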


\begin{proof}
 Let $G$ be the Galois group of $L/F.$
 Since $\displaystyle N_{L \otimes_F N/N}(\alpha)
 = \prod_{\sigma \in G} \sigma(\alpha),$
 we have
 $$\displaystyle (N_{L \otimes_F N/N}(\alpha)^{-1}) \alpha^n =
 {[\prod_{\sigma \in G} \sigma(\alpha)]}^{-1}{\alpha^n} =
 \prod_{\sigma \in G} \left({[\sigma(\alpha)]}^{-1}{\alpha}\right).$$
 Since
 $[\sigma(\alpha)]^{-1}{\alpha} \in RT_{L/F}(N)$ by \cite[Proposition 15]{colliot1977r}, $(N_{L \otimes_F N/N}(\alpha)^{-1}) \alpha^n
 \in RT_{L/F}(N).$
\end{proof}

\section{Semi-global fields and Patching}
\label{Semi-global fields and Patching}

Let $F$ be a field and $\Omega _{F}$ be the set of all
equivalence classes of discrete valuations $\nu$ on $F.$
For $\nu \in \Omega _{F},$ let $F_{\nu }$ denote
the completion of $F$ at $\nu$ and $\kappa(\nu)$
the residue field at $\nu.$
For an algebraic group $G$ over $F,$ let
 $$\Sh (F, G) \colonequals \mathsf{ker} \left( H^1(F, G) \rightarrow
\displaystyle \prod_{\nu \in \Omega_{F}} H^1(F_{\nu }, G) \right).$$\\

In this article, we are concerned with
a special class of fields called semi-global fields.\\

\begin{defin}
\label{SGF}
A {\it semi-global field} is the function field of
a curve over a complete discretely valued field.

\end{defin}

\textbf{Setting}: Let $T$ be a complete discretely valued ring with
fraction field $K$ and residue field $\kappa$.
Let $t \in T$ be a parameter.
Let $F$ be a function field of a curve over $K.$
Thus $F$ is a semi-global field.
Then there exists a regular $2$-dimensional integral
scheme $\XX$ which is proper over $T$ with function
field $F.$ We call such a scheme $\XX$ a
{\it regular proper model} of $F.$
Further there exists a regular proper model
of $F$ with the reduced special fibre
a union of regular curves with only normal crossings.
Let $\XX$ be a regular proper model of $F$
with the reduced special fibre $X$ a union of regular
curves with only normal crossings.
Such a model always exists by \cite[p-193]{lipman1975introduction}.

\subsection{Overfields of a semi-global field}
\label{prelim_patching_overfields}

For a semi-global field $F$ along with a regular proper model $\XX$ as in the previous paragraph, one can
associate three different kinds of overfields of $F$.
We describe them below and discuss how they are
related to each other. \\

 For any point $x$ of $\XX$,
 let $R_x$ be the local ring at $x$ on $\XX,$
 $\hat{R}_x$ the completion of the local ring $R_x,$
 $F_x$ the fraction field of $\hat{R}_x$ and
 $\kappa(x)$ the residue field at $x.$ \\

For any non-empty open subset $U$ of $X$ that is properly contained
in an irreducible component of $X$,
let $R_U$ be the subring of $F$
consisting of the rational functions which are
regular at every point of $U.$
Let $\hat{R_U}$ be the $t$-adic completion
of $R_U$ and $F_U$ the fraction field of $\hat{R_U}.$\\

Let $\eta \in X$ be a codimension zero point and $P \in X$
a closed point such that $P$ is in $X_{\eta}$ where $X_\eta$ 
 is the closure of $\eta$. 
Such a pair $(P,\eta )$ is called a {\it branch}.
For a branch $(P,\eta ),$ we define $F_{P,\eta }$
to be the completion of $F_P$ at the discrete valuation
of $F_P$ associated to $\eta .$
We call such fields {\it branch fields}.
If $\eta$ is a codimension zero point of $X$,
$U\subset X_\eta$ an open subset of $X$ as above and
$P \in X_\eta$ a closed point, then
we will use $(P,U)$ to denote the
branch $(P,\eta)$ and $F_{P,U}$ to denote
the field $F_{P,\eta}.$ \\

With $P, U, \eta $ as above,
there are natural inclusions of
$F_P,$ $F_U$ and $F_{\eta }$
into $F_{P,\eta }=F_{P,U}.$
Also, there is a natural inclusion
of $F_{U}$ into $F_{\eta }.$ \\

Let $\PP$ be a nonempty finite set of closed points of $X$
that contains all the closed points of $X$,
where distinct irreducible components of $X$ meet.
Let $\mathcal{U}$ be the set of connected components
of the complement of $\PP$ in $X$ and let $\mathcal{B}$
be the set of branches $(P, U)$ with
$P \in \PP$ and $U \in \mathcal{U}$
with $P$ in the closure of $U$.

\subsection{The associated graph}

 We have a graph $\Gamma(\XX,\PP)$ associated to 
 $\XX$ and $\PP$ whose 
 vertices are elements of $\PP \cup \mathcal{U}$ 
 and there is an edge connecting $P \in \PP$ and $U\in \mathcal{U}$ if $(P,U)$ is a branch. 
There are no edges between any vertices 
which are in $\PP$ (resp. $\mathcal{U}$). Thus
$\Gamma(\XX, \PP)$ is a finite bipartite  graph with 
parts $\PP $ and $\mathcal{U}$. 
If $\PP'$ is another non-empty finite set of closed points of $X$ 
containing all the closed points of 
$X$ where  distinct irreducible components 
of $X$ meet, then $\Gamma(\XX,\PP)$ is a tree is and only if 
$\Gamma(\XX, \PP')$ is a tree (\cite[Remark 6.1(b)]{harbater2015local}). 
Hence if $\Gamma(\XX,\PP)$ is a tree for some $\PP$ as above, 
then we say that  the graph associated to $\XX$ is a tree.\\

We start with a basic fact about finite bipartite trees.

\begin{lemma}
\label{factorization_in_graph_multinorm}

Let $\Gamma$ be a finite bipartite tree. Let
$V$ be the set of vertices with parts $V_1$
and $V_2$, and $E$ be the set of edges.
Suppose that for every
edge $e \in E,$ we have an abstract group
$G_e$ and for every vertex $v \in V$, we have
an abstract group $G_v$ such that
for all $v \in V$ and edges $e$ with
$v$ as one of the vertex, we have a
surjective group homomorphism
$f_{v,e} \colon G_{v} \rightarrow G_{e}$.
Then for every tuple $\displaystyle (g_{e})_{e \in E} \in \prod_{e \in E} G_e$,
there exists a tuple $\displaystyle (g_v)_{v\in V} \in \prod_{v \in V} G_v$ such that
if $e$ is the edge joining the vertices $v_1\in V_1$ and $v_2\in V_2$,
$$ g_{e} = f_{v_1,e}(g_{v_1}) \cdot f_{v_2,e}(g_{v_2}).$$
\end{lemma}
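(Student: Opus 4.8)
The plan is to induct on the number of edges of $\Gamma$. If $E = \emptyset$ there is nothing to prove: the empty tuple is trivially a solution (or, if one prefers, take $g_v = e$ for all $v$). So suppose $\Gamma$ has at least one edge. Since $\Gamma$ is a finite tree, it has a leaf, i.e.\ a vertex $w$ incident to exactly one edge $e_0$; say $e_0$ joins $w$ to its unique neighbour $u$. Let $\Gamma'$ be the graph obtained by deleting $w$ and the edge $e_0$; then $\Gamma'$ is again a finite bipartite tree (connected, acyclic, and bipartite as a subgraph of a bipartite graph), with edge set $E' = E \setminus \{e_0\}$ and vertex set $V' = V \setminus \{w\}$. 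The data $(G_v)_{v \in V'}$, $(G_e)_{e \in E'}$ and the homomorphisms $f_{v,e}$ for $e \in E'$ restrict to $\Gamma'$, so the inductive hypothesis applies to $\Gamma'$.

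Given a target tuple $(g_e)_{e \in E}$, first apply the inductive hypothesis to $\Gamma'$ with the target tuple $(g_e)_{e \in E'}$: this produces elements $(g'_v)_{v \in V'}$ with $g_e = f_{v_1,e}(g'_{v_1})\cdot f_{v_2,e}(g'_{v_2})$ for every edge $e \in E'$ joining $v_1$ and $v_2$. Now I need to extend this to a solution on $\Gamma$ by choosing $g_w$, possibly after adjusting $g_u$. The constraint at the edge $e_0$ is $g_{e_0} = f_{u,e_0}(g_u)\cdot f_{w,e_0}(g_w)$. Using surjectivity of $f_{w,e_0}\colon G_w \to G_{e_0}$, choose $g_w \in G_w$ with
\[
f_{w,e_0}(g_w) = f_{u,e_0}(g'_u)^{-1}\cdot g_{e_0},
\]
and set $g_v = g'_v$ for all $v \in V'$ (in particular $g_u = g'_u$). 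Then the equation at $e_0$ holds by construction, and the equations at the edges $e \in E'$ are unaffected since none of them involves $w$ and the values at the vertices of $\Gamma'$ are unchanged. This gives the required tuple $(g_v)_{v \in V}$.

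The only subtlety to watch — and the main place a naive argument could go wrong — is the interaction between the leaf edge and the rest of the graph: one must be sure that fixing $g_w$ to satisfy the $e_0$-equation does not force a change in $g_u$ that would break an already-satisfied equation at another edge incident to $u$. Here the tree structure is exactly what saves us: after removing the leaf $w$, the vertex $u$ still sits in $\Gamma'$, its value $g_u = g'_u$ is already pinned down by the inductive solution, and we solve the $e_0$-equation by moving only $g_w$, which appears in no other edge equation. (Bipartiteness, incidentally, is not essential for this argument — only the tree property and the surjectivity of the $f_{v,e}$ are used — but it costs nothing to state it in the bipartite form in which it will be applied.) So the induction goes through cleanly, and there is no real obstacle beyond setting up the leaf-deletion bookkeeping carefully.
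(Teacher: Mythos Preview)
Your proof is correct and takes essentially the same approach as the paper: both argue by leaf-deletion induction on the tree, using surjectivity of $f_{w,e_0}$ at the deleted leaf $w$ to extend the inductive solution without disturbing the already-fixed value at the neighbour $u$. The only cosmetic differences are that the paper inducts on the number of vertices rather than edges (equivalent, since removing a leaf drops both counts by one) and first reduces to the connected case, whereas your edge-count induction handles that implicitly.
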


\begin{proof}

Without loss of generality,
we may assume that $\Gamma$
is a connected tree.
We prove the lemma by induction
on the number of vertices of $\Gamma$.
Suppose that $\Gamma$ has only one vertex.
Then there is nothing to prove. \\

Suppose that $\Gamma$ has $n$
vertices, where $n > 1$. Let $\displaystyle
(g_e)_{e \in E} \in \prod_{e \in E} G_e.$
Since $\Gamma$ is a finite connected tree, there
exists a vertex $v_0$ with exactly one edge
$\theta$ at $v_0$.
Without loss of generality, we may assume that
$v_0 \in V_1$. Let $\Gamma'$ denote the graph
obtained by deleting the vertex $v_0$ and the edge
$\theta$. Then $\Gamma'$ is again a finite bipartite
tree with $n-1$ vertices. Thus, by induction hypothesis,
there exists a tuple $\displaystyle (g_v)_{v \in V \setminus \{v_0\}}
\in \prod_{v \in V\setminus \{v_0\}} G_{v}$ such that
whenever $e \in E\setminus \{\theta \}$ is an edge
between some vertices $v_1$ and $v_2$ in $V \setminus \{v_0\}$,
we have $g_{e} = f_{v_1,e}(g_{v_1}) \cdot f_{v_2,e}(g_{v_2}).$\\

Let $v'_{0} \in V_2$ be the other vertex
of the edge $\theta$. Choosing $g_{v_0}$
to be an element in $f^{-1}_{v_0,\theta}
(g_{\theta} \cdot f_{v'_{0},\theta}(g_{v'_0})^{-1})$,
we see that $(g_{v})_{v \in V}$ satisfies the
required property. Hence we are done.

\end{proof}

\subsection{Tate-Shafarevich groups}

Let $G$ be a linear algebraic group over $F.$ Let us define
 $$\displaystyle \Sh _{\XX, \PP}(F, G) \colonequals
 \mathsf{ker}\left( H^1(F,G) \rightarrow
 \prod _{\xi \in \PP \cup \mathcal{U}} H^1 (F_{\xi }, G) \right).$$\\

 If $\XX$ is understood, we write 
 $\Sh _{ \PP}(F, G)$ for $\Sh _{\XX, \PP}(F, G)$.\\

 Similarly, let us define $$\displaystyle \Sh _{\XX, X}(F, G) \colonequals
 \mathsf{ker}\left( H^1(F,G) \rightarrow
 \prod _{P \in X} H^1(F_P,G)\right).$$\\

Again, if $\XX$ is understood,
we write $\Sh _{X}(F,G)$ for $\Sh _{\XX, X}(F, G)$.\\

 We have a bijection(\cite[Corollary 3.6.]{harbater2015local})
 $\colon$ \hfill\\
 \begin{center}
 $\displaystyle \prod_{U\in \mathcal{U}} G(F_U)
 \backslash\prod_{(P,U) \in \mathcal{B}} G(F_{P,U})/
 \prod_{P\in \PP} G(F_P) \rightarrow \Sh _{\PP}(F, G)$
 \end{center}

For an $F$-torus $T$, a new double coset representation of $\Sh _{\PP}(F, T)$ is obtained in \cite{colliot2020local}:

\begin{theorem}{\cite[Theorem 3.1(b)]{colliot2020local}}
\label{Sha_p_in_terms_of_mod_R} There is an isomorphism of abelian groups\\

 $\displaystyle
 \prod_{U\in \mathcal{U}} T(F_U)/R\, \backslash \,
 \prod_{(P,U) \in \mathcal{B}} T(F_{P,U})/R \, /\, \prod_{P\in \PP} T(F_P)/R
 \simeq \Sh _{\PP}(F, T)
.$

\end{theorem}

\vspace{0.05in}

We recall basic results by Harbater, Hartmann and Krashen on the 
relationship between the various Tate-Shafarevich groups:\\

\begin{theorem}{\cite[Corollary 5.9.]{harbater2015local}}
\label{HHK_theorem1}
$\displaystyle \Sh _{X}(F, G) = \bigcup \Sh _{\PP}(F, G)$,
where union ranges over all finite subsets
$\PP$ of closed points of $\XX$
which contain all the singular points of $X.$

\end{theorem}

\begin{theorem}{\cite[Proposition 8.2.]{harbater2015local}}
\label{HHK_theorem2}
$\Sh _{X}(F, G) \subseteq \Sh (F, G)$.
\end{theorem}

As a corollary to Lemma \ref{factorization_in_graph_multinorm}, we get the following:

\begin{cor}
\label{surjective_from_P_and_U_implies_ShaP_trivial}

Let $K$ be a complete discretely valued field
with residue field $\kappa$ and $F$ be
the function field of a curve over $K.$ 
Let $\XX$ be a regular proper model of $F$
with the reduced special fibre $X$ a union of regular
curves with only normal crossings. 
Let $T$ be a torus defined over $F$. Let $\PP$ be a non-empty finite set of closed points of $X$ containing
all the nodal points .

Assume that$\colon$\\

$(i)$ the graph associated to $\XX$
is a tree,\\

$(ii)$ the natural map $T(F_{U})
\rightarrow T(F_{P,U})/R$ is surjective
for all possible $U$ and branches $(P,U)$,
where $U$ is one of
the components of $X\setminus {\PP}$, and \\

$(iii)$ the natural map $T(F_{P})
\rightarrow T(F_{P,U})/R$ is surjective
for all possible $P$ and branches $(P,U)$.\\

Then $\Sh_{\PP}(F,T)= 0.$

\end{cor}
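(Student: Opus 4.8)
The plan is to use the double coset description of $\Sh_{\PP}(F,T)$ from \cite[Theorem 3.1(b)]{colliot2019local}, namely
\[
\Sh_{\PP}(F,T) \simeq \prod_{U\in \mathcal{U}} T(F_U)/R \,\backslash\, \prod_{(P,U)\in\mathcal{B}} T(F_{P,U})/R \,/\, \prod_{P\in\PP} T(F_P)/R,
\]
and show that every class in the middle term is trivial, i.e. every tuple $(\bar g_{P,U})_{(P,U)\in\mathcal{B}}$ can be written as a product of something coming from the $U$'s and something coming from the $P$'s. First I would set up the bipartite graph $\Gamma$ associated to $\XX$ and $\PP$: vertices $V_1 = \mathcal{U}$ (the components of $X\setminus\PP$), vertices $V_2 = \PP$, and edges $E = \mathcal{B}$, with an edge $(P,U)$ joining $U\in V_1$ to $P\in V_2$. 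By hypothesis $\Gamma$ is a tree, so it is a finite bipartite tree and Lemma~\ref{factorization_in_graph_multinorm} applies.

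Next I would assign the abstract groups: to each edge $e=(P,U)$ put $G_e = T(F_{P,U})/R$; to each vertex $U\in V_1$ put $G_U = T(F_U)$ (or its image, but $T(F_U)$ works as an abstract group) and to each vertex $P\in V_2$ put $G_P = T(F_P)$. The homomorphisms $f_{v,e}$ are the natural maps induced by the inclusions $F_U \hookrightarrow F_{P,U}$ and $F_P \hookrightarrow F_{P,U}$ followed by passage to $R$-equivalence classes; these are group homomorphisms since $RT$ is a normal subgroup functorially. The second and third bullet hypotheses say precisely that each $f_{v,e}\colon G_v \to G_e$ is surjective. Note that $G_e$ is abelian (it is a quotient of the $F_{P,U}$-points of a torus), which is all Lemma~\ref{factorization_in_graph_multinorm} needs on the edge groups, and the multiplicativity $g_e = f_{v_1,e}(g_{v_1})\cdot f_{v_2,e}(g_{v_2})$ is exactly the statement that the double coset of $(g_e)_e$ is trivial.

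So given any double coset representative $(\bar g_{P,U})_{(P,U)\in\mathcal{B}} \in \prod_{(P,U)} T(F_{P,U})/R$, Lemma~\ref{factorization_in_graph_multinorm} produces $(g_U)_{U\in\mathcal{U}} \in \prod_U T(F_U)$ and $(g_P)_{P\in\PP} \in \prod_P T(F_P)$ with $\bar g_{P,U} = f_{U,(P,U)}(g_U)\cdot f_{P,(P,U)}(g_P)$ for every branch $(P,U)$; translating back through the isomorphism above, this says the class of $(\bar g_{P,U})$ in $\Sh_{\PP}(F,T)$ is trivial. Since the representative was arbitrary, $\Sh_{\PP}(F,T)=0$. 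The only point requiring a little care — and the main thing to verify rather than a genuine obstacle — is that the groups and maps feeding into Lemma~\ref{factorization_in_graph_multinorm} are genuinely the ones appearing in the Colliot-Thélène–Harari double coset, in particular that the quotient maps $T(F_U)\to T(F_U)/R$ and $T(F_P)\to T(F_P)/R$ are compatible with the edge maps so that surjectivity onto $T(F_{P,U})/R$ is preserved; this is immediate from functoriality of $R$-equivalence (as recorded before Lemma~\ref{R_trivial_elements_cyclic_norm_one_tori}).
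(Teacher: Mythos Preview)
Your proposal is correct and follows essentially the same approach as the paper: set up the bipartite graph on $\PP \cup \mathcal{U}$ with edge set $\mathcal{B}$, invoke the double coset description of $\Sh_{\PP}(F,T)$ from \cite[Theorem 3.1(b)]{colliot2019local}, and apply Lemma~\ref{factorization_in_graph_multinorm} with $G_{P,U}=T(F_{P,U})/R$, $G_P=T(F_P)$, $G_U=T(F_U)$. The paper's proof is terser but identical in substance; your additional remarks on functoriality of $R$-equivalence and the structure of the edge groups are correct elaborations rather than departures.
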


\begin{proof}

By Theorem \ref{Sha_p_in_terms_of_mod_R}, we have an isomorphism of abelian groups

 $$\displaystyle
 \prod_{U\in \mathcal{U}} T(F_U)/R\, \backslash \,
 \prod_{(P,U) \in \mathcal{B}} T(F_{P,U})/R \, /\, \prod_{P\in \PP} T(F_P)/R
 \simeq \Sh _{\PP}(F, T)
.$$
Note that we have a finite bipartite tree $\Gamma$ with
$V_1 = \PP$, $V_2 = \mathcal{U}$
and  $E= \mathcal{B}$, where
$(P,U) \in \mathcal{B}$ is the edge joining
$P \in \PP$ and $U \in V_2$. Now considering the groups
$G_{P,U} = T(F_{P,U})/R$, $G_{P} = T(F_{P})$ and
$G_{U} = T(F_{U})$ and using assumptions $(i)-(iii)$, the result follows
from Lemma \ref{factorization_in_graph_multinorm}.

\end{proof}

\section{$\Sh$ vs $\Sh_X$}
\label{Sha}

We compare the groups $\Sh$ and
$\Sh_X$ for multinorm tori.\\

\begin{notation}

For a regular proper model $\XX$
of a semi-global field $F$ and
a field extension $L/F$, we use
$ram_{\XX}(L/F)$ to denote the
ramification locus for the extension
$L/F$ with respect to $\XX$. Also,
for $\lambda \in F^{\times}$,
we use $supp_{\XX}(\lambda)$
to denote the support of $\lambda$ in $\XX$.
For any field $F$ with a discrete valuation $\nu$, by abuse of notation, we would say that an element $x \in F$ is a unit if $\nu(x)=0$.
\end{notation}

\begin{theorem}
\label{branch_to_P_multinorm}
Let $A$ be a complete regular local ring of dimension
$2$ with the residue field $\kappa$ and the fraction
field $F$. Let $L_1, L_2, \dots, L_m$ be finite Galois
extensions of $F$ with $[L_i \colon F] = n_i$. Assume that $\text{char}(\kappa)$ does not divide $n_i$ for $1 \leq i \leq m$.
Let $\mathfrak{m} = (\pi_1, \pi_2)$ be the maximal
ideal of $A$. Assume that $L_i/F$ are unramified on $A$
except possibly at $\pi_1$, $\pi_2$.
Let $\lambda = u \pi^r_1 \pi^s_2 \in F$, where
$u \in A$ is a unit and $r, s$ are integers.
Suppose that $\lambda$ is a norm
from $\displaystyle \prod^{m}_{i=1} L_i\otimes_F F_{\pi_1}$ to $F_{\pi_1}$.
Then $\lambda$ is a norm from
$\displaystyle \prod^{m}_{i=1} L_i$ to $F$.
\end{theorem}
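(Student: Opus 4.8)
The plan is to translate the statement into the vanishing of a Galois cohomology class and then exploit the rigidity coming from the fact that $\lambda$ and the $L_i/F$ are ``supported'' on the two coordinate divisors $V(\pi_1),V(\pi_2)$.

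Set $T=T_{L/F}$ with $L=\prod_{i=1}^m L_i$. From $1\to T\to\prod_i R_{L_i/F}\mathbb{G}_m\xrightarrow{N}\mathbb{G}_m\to 1$, Shapiro's lemma and Hilbert 90 give $H^1(F,T)\cong F^\times/\prod_i N_{L_i/F}(L_i^\times)$, and likewise over $F_{\pi_1}$; so the conclusion is ``$[\lambda]=0$ in $H^1(F,T)$'' and the hypothesis is ``$[\lambda]$ dies in $H^1(F_{\pi_1},T)$''. I would first reduce to $m=1$. Write the hypothesis as $\lambda=\prod_i\mu_i^{(1)}$ in $F_{\pi_1}^\times$ with $\mu_i^{(1)}$ a norm from $L_i\otimes_F F_{\pi_1}/F_{\pi_1}$. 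Since $\mathrm{char}(\kappa)\nmid n_i$, each $L_i\otimes_F F_{\pi_1}$ is a product of tamely ramified extensions of $F_{\pi_1}$, whence $1+\mathfrak{m}_{F_{\pi_1}}$ lies in every such local norm group; and since $\pi_2$ is a unit of $A_{(\pi_1)}$ whose image is a uniformizer of the discrete valuation ring $A/(\pi_1)$, the subgroup of $F^\times$ consisting of the $(\text{unit of }A)\cdot\pi_1^{a}\pi_2^{b}$ already surjects onto $F_{\pi_1}^\times/(1+\mathfrak{m}_{F_{\pi_1}})$. So one can pick, for $i<m$, elements $\mu_i\in F^\times$ of that shape with $\mu_i\equiv\mu_i^{(1)}\pmod{1+\mathfrak{m}_{F_{\pi_1}}}$, and set $\mu_m:=\lambda/\prod_{i<m}\mu_i$; then (as $A$ is a UFD) each $\mu_i\in F^\times$ has divisor supported on $V(\pi_1)\cup V(\pi_2)$, each is a norm from $L_i\otimes_F F_{\pi_1}/F_{\pi_1}$, and $\prod_i\mu_i=\lambda$. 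Applying the case $m=1$ to each $\mu_i$ and $L_i/F$ finishes it. Thus we may assume $L/F$ is a single Galois extension of degree $n$ (prime to $\mathrm{char}\,\kappa$), unramified on $A$ off the axes, and $\mu=u\pi_1^r\pi_2^s$ is a norm from $L\otimes_F F_{\pi_1}/F_{\pi_1}$.

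For $L/F$ \emph{cyclic} this is a Brauer-group assertion: $\mu\in N_{L/F}(L^\times)$ iff $\alpha:=(\mu)\cup\chi_L=0$ in ${}_n\mathrm{Br}(F)$. Writing $\chi_L\leftrightarrow c=w_0\pi_1^{c_1}\pi_2^{c_2}\bmod F^{\times n}$ with $w_0\in A^\times$ (possible because $L/F$ is unramified off the axes and $A$ is a UFD; after adjoining suitable roots of unity, or working with twisted coefficients in general), expanding the symbol bilinearly gives $\alpha=(u,w_0)+(\pi_1,A_1)+(\pi_2,A_2)+(rc_2-sc_1)(\pi_1,\pi_2)$ in ${}_n\mathrm{Br}(F)$ with $A_1,A_2\in A^\times$ explicit. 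Now $\alpha$ is unramified away from $V(\pi_1)\cup V(\pi_2)$, and the vanishing of $\alpha$ over $F_{\pi_1}$ forces exactly: $n\mid(sc_1-rc_2)$ (so the $(\pi_1,\pi_2)$-term drops globally); $A_1\in A^{\times n}$ (from the residue at $(\pi_1)$); $A_2\in A^{\times n}$ (from the further residue — along the uniformizer $\bar\pi_2$ — of the unramified part of $\alpha|_{F_{\pi_1}}$ over $\kappa_1:=\mathrm{Frac}(A/(\pi_1))$); and $(u,w_0)=0$ in $\mathrm{Br}(\kappa)$ (using $\mathrm{Br}(\kappa)[n]\hookrightarrow\mathrm{Br}(\kappa_1)[n]\hookrightarrow\mathrm{Br}(F_{\pi_1})$). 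But these are precisely the conditions under which $\alpha$ is unramified on all of $A$ and equals the image of $(u,w_0)=0$; since $\mathrm{Br}(A)[n]\cong\mathrm{Br}(\kappa)[n]$ for the complete local ring $A$, we get $\alpha=0$ over $F$. (The mechanism turning ``vanishing at $(\pi_1)$'' into a statement over $\kappa$ is the reciprocity law on the regular $2$-dimensional complete local ring $A$: a family of residues is coherent iff the second residues at the closed point sum to zero.)

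The main obstacle is the general Galois, non-cyclic, single-extension case, where ``$\mu$ is a product of norms'' is not a Brauer-group condition (as biquadratic extensions already show). Here one must either reduce to the cyclic case by d\'evissage along a tower of subextensions of $L/F$ with cyclic layers — delicate, because the norm map is not multiplicative across such a tower and the ``unramified off the axes'' hypothesis must survive each step — or, more robustly, run the residue-and-reciprocity analysis directly for the torus $T=R^1_{L/F}\mathbb{G}_m$, which has good reduction on $\mathrm{Spec}\,A\smallsetminus(V(\pi_1)\cup V(\pi_2))$, using the arithmetic (Gersten-type) resolution for such a torus over $\mathrm{Spec}\,A$ together with the explicit structure of the local norm groups of the tame extensions $L\otimes_F F_{\pi_j}$. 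Identifying the residues of $[\mu]\in H^1(F,T)$ at $(\pi_1)$ and $(\pi_2)$, showing that triviality at $(\pi_1)$ forces the $(\pi_2)$-residue to descend to a class over $\kappa$ and then to vanish, and pinning down the correct reciprocity for $T$ on $A$, is the part I expect to require the most care.
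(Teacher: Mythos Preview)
Your reduction to the single-extension case $m=1$ is correct and is precisely the paper's strategy. The paper writes each local norm factor $\beta_i$ as $u_i\pi_1^{r_i}\pi_2^{s_i}\cdot b_i^{n}$ (via \cite[Lemma~4.7]{mishra2021local}), observes that $\prod_i b_i^{n}$ lies in $F$ and is an $n$-th power in $F_{\pi_1}$, hence in $F$ by \cite[Corollary~5.5]{parimala2018local}, and discards it; your approximation modulo $1+\mathfrak{m}_{F_{\pi_1}}$ achieves the same end by an equally valid mechanism (tameness plus Hensel in place of the $n$-th power lifting lemma). Either way one is left with factors $\mu_i=u_i\pi_1^{r_i}\pi_2^{s_i}\in F^\times$, each a local norm at $\pi_1$ from the single extension $L_i$.

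Where your proposal and the paper diverge is the single-extension case itself. The paper does \emph{not} reprove it: it simply invokes \cite[Theorem~4.9]{mishra2021local}, the author's earlier result on norm-one tori, which already says that an element of the shape $u\pi_1^{r}\pi_2^{s}$ that is a norm from $L\otimes_F F_{\pi_1}$ is a norm from $L/F$, for any finite Galois $L/F$ unramified on $A$ off $(\pi_1,\pi_2)$ with degree prime to $\mathrm{char}(\kappa)$. So the ``main obstacle'' you flag---the non-cyclic general Galois case---is exactly the content of that cited theorem and is treated here as a black box. Your Brauer-group sketch in the cyclic case is in the right spirit (and is close in flavour to how such results are proved), but there is no need to develop the torus-cohomology/reciprocity machinery for general $L/F$ in this paper: once one is willing to cite the earlier work, the proof is complete immediately after your reduction step.
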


\begin{proof}

Let us consider $\displaystyle n = \prod_{i=1}^{m} n_i$.
By our assumption, $\text{char}(\kappa)$ does not divide $n$.
Let $\displaystyle \lambda = \prod_{i=1}^{m} \beta_i$,
where $\beta_i\in F_{\pi_1}^{\times}$, and $\beta_i = N_{L_i\otimes_F F_{\pi_1}/F_{\pi_1}}(\alpha_i)$
for some $\alpha_i \in (L_i\otimes_F F_{\pi_1})^{\times}$.
By \cite[Lemma 4.7]{mishra2021local}, we can write
$\beta_i = u_i \pi_1^{r_i} \pi_2^{s_i} {b_i}^{n}$
for some $u_i \in A$ a unit, $b_i \in F_{\pi_1}$ and
integers $r_i, s_i$.

Let $\displaystyle b = \prod_{i=1}^{m} (b_i)^n$. Then
$\displaystyle b = \lambda  \prod_{i=1}^{m} (u_i \pi_1^{r_i} \pi_2^{s_i})^{-1}$.
Hence $b \in F$. Since $b$ is a $n^{\rm{th}}$ power in $F_{\pi_1}$,
it is a $n^{\rm{th}}$ power in $F$ by considering $m=1$ case in \cite[Corollary 5.5.]{parimala2018local} and using the identification $H^1(F,\mu_n) \simeq F^{\times}/F^{\times n}$. Hence $b$ is a norm from $L_i/F$. Thus we can assume that $b_i = 1$ and $\beta_i \in F$.
Since $\beta_i = u_i \pi^{r_i}_1 \pi^{s_i}_2$ is a norm from $L_i\otimes_F F_{\pi_1}/F_{\pi_1}$, by \cite[Theorem 4.9]{mishra2021local},
$\beta_i$ is a norm from $L_i/F$ and hence
$\lambda = \prod \beta_i$ is a norm from $\displaystyle \prod^{m}_{i=1} L_i$ to $F$.

\end{proof}

\begin{theorem}
\label{union_of_Sha_X_equals_Sha_dvr_multinorm}

Let $K$ be a complete discretely valued field with
residue field $\kappa.$
Let $F$ be the function field of a curve over $K$.
Let $L_i/F$ be Galois field extensions of degrees not divisible by $\text{char}(\kappa)$ for $i=1, 2, \dots , m.$
Let $\displaystyle L = \prod_{i=1}^{m} L_i$.
Let $\XX_0$ be a regular proper model of $F$.
Then $$\displaystyle \Sh (F,T_{L/F}) =
\bigcup_{X} \Sh _X (F,T_{L/F}),$$
where $X$ runs over the reduced special fibres
of regular proper models $\XX$ of $F$ which are
obtained as a sequence of blow-ups of
$\XX_0$ centered at closed points.

\end{theorem}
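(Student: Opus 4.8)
The plan is to prove the two inclusions separately. The inclusion $\bigcup_X \Sh_X(F,T_{L/F}) \subseteq \Sh(F,T_{L/F})$ is immediate from the fact, recalled in the excerpt, that $\Sh_X(F,G) \subseteq \Sh(F,G)$ for every regular proper model $\XX$ with reduced special fibre $X$; this holds for an arbitrary linear algebraic group $G$ and in particular for $T_{L/F}$. So the real content is the reverse inclusion: given a class $\alpha \in \Sh(F,T_{L/F})$, we must produce a regular proper model $\XX$, obtained from $\XX_0$ by a sequence of blow-ups at closed points, whose reduced special fibre $X$ has the property that $\alpha \in \Sh_X(F,T_{L/F})$, i.e. $\alpha$ dies in $H^1(F_P, T_{L/F})$ for every closed point $P$ of $X$.

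First I would fix, starting from $\XX_0$, a regular proper model $\XX$ of $F$ such that the reduced special fibre $X$ is a union of regular curves with normal crossings, and moreover $X$ contains the ramification locus $ram_\XX(L_i/F)$ of each extension $L_i/F$ as a subset of its components (together with the loci $supp_\XX(\lambda)$ of whatever finitely many elements $\lambda \in F^\times$ turn out to be relevant); by the theory of regular proper models and repeated blow-ups at closed points this can be arranged, and the blow-ups do not change the generic fibre, so $\XX$ is still a model of $F$ obtained from $\XX_0$ by blow-ups at closed points. Next I would translate the $R$-equivalence / torsor statement into a norm statement. Since $H^1(F,T_{L/F})$ for a multinorm torus is computed, via the standard exact sequence $1 \to T_{L/F} \to \prod_i R_{L_i/F}\mathbb{G}_m \to \mathbb{G}_m \to 1$, by the group $F^\times / \prod_i N_{L_i/F}(L_i^\times)$, an element $\alpha \in \Sh(F,T_{L/F})$ corresponds to the class of some $\lambda \in F^\times$ which is a product of local norms from the $L_i$ at every discrete valuation of $F$; and $\alpha \in \Sh_X(F,T_{L/F})$ amounts to $\lambda$ being a product of norms from the extensions $L_i \otimes_F F_P / F_P$ for every closed point $P$ of $X$. (One has to be a little careful that $H^1(F_P, \prod_i R_{L_i/F}\mathbb{G}_m)$ vanishes so that the comparison is clean; this is Hilbert 90 over $F_P$.)

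With this reformulation, the argument is local at each closed point $P \in X$ and splits into two cases according to whether $P$ is a regular point of a single component or a nodal point. If $P$ lies on a unique component $\eta$ of $X$ with the other components through $P$ absent, then $\widehat{R}_P$ is a regular local ring in which $\lambda$ has support contained in a single regular parameter, and each $L_i \otimes_F F_P$ is unramified except at that parameter; here the hypothesis that $\lambda$ is a product of norms locally at the discrete valuation $\eta$ of $F_P$ feeds into \cite[Theorem 4.9]{mishra2021local} (or its reduction via \cite[Lemma 4.7, Corollary 5.5]{parimala2018local} as in the proof of Theorem \ref{branch_to_P_multinorm} above) to conclude $\lambda$ is a product of norms from $L_i \otimes_F F_P / F_P$. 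If $P$ is a nodal point, then $\widehat{R}_P$ is a complete regular local ring of dimension two whose maximal ideal $\mathfrak{m} = (\pi_1,\pi_2)$ is generated by the two branches through $P$, the extensions $L_i \otimes_F F_P$ are unramified away from $\pi_1,\pi_2$ by our choice of $\XX$, and $\lambda = u\pi_1^r\pi_2^s$ with $u$ a unit; now we know $\lambda$ is a product of local norms at the branch $\pi_1$ (since that corresponds to a discrete valuation of $F_P$), so Theorem \ref{branch_to_P_multinorm} applies verbatim and gives that $\lambda$ is a product of norms from $L_i \otimes_F F_P / F_P$. Running over all finitely many closed points $P$ of $X$ yields $\alpha \in \Sh_X(F,T_{L/F})$, completing the reverse inclusion.

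The main obstacle I expect is purely bookkeeping rather than conceptual: ensuring a single model $\XX$ (obtained from $\XX_0$ by blow-ups at closed points) simultaneously makes all the extensions $L_i/F$ have normal-crossings ramification locus supported on components of $X$ \emph{and} makes the support of the chosen $\lambda \in F^\times$ contained in the components of $X$, so that at \emph{every} closed point of $X$ we land in the hypotheses of Theorem \ref{branch_to_P_multinorm}; one also has to check that the choice of $\lambda$ representing $\alpha$ can be made compatibly, and that at regular points of $X$ lying at the intersection of the strict transform with an exceptional divisor the local picture is still of the form covered above. None of these steps is deep, but they must be assembled carefully, exactly as in the norm-one-torus case treated in \cite{mishra2021local}.
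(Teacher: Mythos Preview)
Your approach is essentially identical to the paper's: represent the class by some $\lambda \in F^\times$, blow up $\XX_0$ so that $\operatorname{supp}_\XX(\lambda)$, the ramification loci $\operatorname{ram}_\XX(L_i/F)$, and the reduced special fibre form a normal-crossings divisor, and then at each closed point invoke Theorem~\ref{branch_to_P_multinorm} using that $\lambda$ is a product of norms at the discrete valuation given by the generic point of a component through $P$. Two small slips to fix: the product defining $\Sh_X$ runs over \emph{all} points of $X$, so you must also treat the generic points of $X$ (trivial, since these are divisorial valuations of $F$ and $\alpha \in \Sh$); and $X$ has infinitely many closed points, not finitely many, though of course the argument at each one is uniform. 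Your regular/nodal case split is harmless but unnecessary, since Theorem~\ref{branch_to_P_multinorm} handles both at once.
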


\begin{proof}

Let $\displaystyle x \in \Sh (F,T_{L/F}) \subset H^1(F, T_{L/F}).$
Since $$\displaystyle H^1(F, T_{L/F}) \simeq
F^{\times}/(\prod_{i=1}^{m} N_{L_{i}/F}(L_{i}^{\times})),$$
we may choose a lift $\lambda \in F^{\times}$ of $x$.
By \cite[p-193]{lipman1975introduction},
there exists a  sequence of blow-ups $\XX \to \XX_0$
centered at closed points such that the union of
$supp_{\XX}(\lambda ),$ $ram_{\XX}({L_i}/F)$
and the reduced special fiber $X$ of $\XX$
is a union of regular curves with normal crossings.
We show that $\displaystyle x \in \Sh _{X}(F,T_{L/F}).$\\

Let $P\in X.$
Suppose $P$ is a generic point of $X.$
Then $P$ gives a discrete valuation
$\nu$ of $F$ with $F_\nu = F_P.$
Since $\displaystyle x \in \Sh (F,T_{L/F}),$ $x$ maps to
$0$ in $H^1(F_P, T_{L/F}).$\\

 Suppose that $P$ is a closed point.
 Let $\eta_1$ be the generic point of
 an irreducible component of $X$ containing $P.$
 Let $\mathcal{O}_{\XX,P}$ be the local ring at $P$
 and $\mathfrak{m}_{\XX,P}$ be its maximal ideal.
 Then by our choice of $\XX,$
 $\mathfrak{m}_{\XX,P}= (\pi _1, \pi _2)$, where
 $\pi _1$ is a prime defining $\eta_1$ at $P,$
 $\lambda = u \pi _1^r \pi _2^s$ for some unit
 $u\in \mathcal{O}_{\XX,P}$ and integers $r,s,$ and
 all $L_i/F$ are unramified on $\mathcal{O}_{\XX,P}$
 except possibly at $\pi _1, \pi _2.$
 Since $L_i/F$ are  Galois extensions,
 $\displaystyle {L_i}\otimes _F F_{P} = \prod (L_i)_P$
 for some Galois extensions $(L_i)_P/F_P.$
 Since $L_i/F$ are unramified on
 $\mathcal{O}_{\XX,P}$ except possibly at $\pi _1, \pi _2,$
 $(L_i)_P/F_P$ is unramified on $\hat{\mathcal{O}}_{\XX,P} $
 except possibly at $\pi _1, \pi _2.$
 Since $\lambda $ is a lift of $\displaystyle x \in \Sh (F,T_{L/F}),$
 $\lambda $ is a norm from
 $L_i\otimes _F F_{\eta_1}$ to $F_{\eta_1}.$
 Since $F_{\eta_1} \subset F_{P, \eta_1},$
 $\lambda$ is a norm from
 $L_i\otimes _F F_{P, \eta_1}$ to $F_{P, \eta_1 }.$
 Hence $\lambda $ is a norm from
 $(L_i)_P \otimes F_{P, \eta_1}$ to $F_{P, \eta_1}.$
 Thus, by Theorem \ref{branch_to_P_multinorm},
 $\lambda$ is a norm from $(L_i)_P$ to $F_P$ and
 $x$ maps to $0$ in $H^1(F_P, T_{L/F}).$
 Therefore $\displaystyle x \in \Sh _X (F,T_{L/F}).$
 By \ref{HHK_theorem2}, we have $\displaystyle \Sh (F,T_{L/F}) = \bigcup _{X} \Sh _X (F,T_{L/F}),$
 where $X$ runs over the reduced special fibres
 of regular proper models of $F$ which are
obtained as a sequence of blow-ups of
$\XX_0$ centered at closed points of $\XX_0$.

\end{proof}

\begin{remark}
\label{divisorial_multinorm}
The proof of Theorem \ref{union_of_Sha_X_equals_Sha_dvr_multinorm}
also works if we just consider divisorial discrete valuations
instead of considering all discrete valuations on $F$.
\end{remark}

\section{Local-Global Principles for product of norms from two extensions}
\label{LGP_1}

We will be using the following notation
throughout this section and the next section.

\begin{notation}

For a semi-global field $F$ and
field extensions $L_i/F$, we
use $L_{i,\eta}$, $L_{i,P}$,
$L_{i,P,\eta}$ and $L_{i,P,U}$
to denote
$L_i \otimes_F F_{\eta}$,
$L_i \otimes_F F_P$,
$L_i \otimes_F F_{P,\eta}$, and
$L_i \otimes_F F_{P,U}$ respectively.

\end{notation}

Let $K$ be a complete discretely valued field
with residue field $\kappa.$
Let $F$ be the function field of a curve over $K$ and
let $L_1$ and $L_2$ be two cyclic
extensions of $F$ each of degree $p$
for some prime $p$.
Let $L = L_1 \times L_2$ and $T_{L/F}$ denote the
associated multinorm torus.
Under some assumptions on $\kappa$, for example, assuming $\kappa$
is either algebraically closed or a finite field with $p \neq \text{char}(\kappa)$, we prove that local-global principle holds for such multinorm tori with respect to discrete valuations. We prove this by proving first that for a regular proper model $\XX$ of $F$ with the reduced special fibre $X$ a union of
regular curves with normal crossings, and for any branch $F_{P,\eta}$, we have 
$T_{L/F}(F_{P,\eta})/R = \{1\}$. For two quadratic extensions, we also show that no assumptions on the residue field is required except that $\text{char}(\kappa) \neq 2.$\\

\begin{observation} 
\label{coprime_degrees}
Let $F$ be a semi-global field and 
let $L_i, 1\leq i \leq m,$ be
finite separable extensions of $F$. Let $\displaystyle L=\prod^{m}_{i=1}L_i$. Let $n_i$ denote the degree of the field extension $L_i/F$ for $1\leq i \leq m$. Assume that 
$\mathsf{gcd}(n_1,\dots ,n_m)=1$. 
For any $\lambda \in F^{\times}$, 
$\lambda^{n_i} \in N_{L_i/F}(L^{\times}_i)$ for $1 \leq i \leq m$. 
Since $\mathsf{gcd}(n_1,\dots ,n_m)=1$, this implies that $\lambda $ is a norm from $L$ to $F$. Thus the local-global principle trivially holds for the multinorm torus $T_{L/F}$. 

\end{observation}

We start with a basic result about
multinorm tori over arbitrary fields.

\begin{lemma}
\label{R_trivial_elements}
Let $L_i, 1\leq i \leq m,$ be
finite separable extensions of a given field
$F$ with $[L_i \colon F]=n_i$.
Let $\displaystyle n\colonequals  \mathsf{lcm} (n_i \mid 1 \leq i \leq m).$
Let $\displaystyle L = \prod_{i=1}^{m} L_i$.
Let $\alpha_i \in F^{\times}$ for all $i, 1 \leq i \leq m-1$.
Then the element
$$\displaystyle (\alpha^{n/n_1}_1, \alpha^{n/n_2}_2, \dots,
\alpha^{n/n_{m-1}}_{m-1},
\prod_{i=1}^{m-1}\alpha^{-n/n_{m}}_i)
\in RT_{L/F}(F).$$

\end{lemma}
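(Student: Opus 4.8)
The plan is to realize the given element as an explicit product of "elementary" norm-type elements, each of which lies in $RT_{L/F}(F)$ by the results already established. Write $T_{L/F} \subset R_{L/F}\mathbb{G}_m = \prod_{i=1}^m R_{L_i/F}\mathbb{G}_m$, so that a point of $T_{L/F}(F)$ is a tuple $(x_1,\dots,x_m)$ with $x_i \in L_i^\times$ and $\prod_i N_{L_i/F}(x_i) = 1$. The element in question is the tuple whose $i$-th coordinate for $i \leq m-1$ is $\alpha_i^{n/n_i} \in F^\times \subset L_i^\times$, and whose $m$-th coordinate is $\prod_{i=1}^{m-1}\alpha_i^{-n/n_m} \in F^\times \subset L_m^\times$; one first checks this is indeed in $T_{L/F}(F)$, since $N_{L_i/F}(\alpha_i^{n/n_i}) = \alpha_i^{n}$ for $i \leq m-1$ while $N_{L_m/F}\big(\prod \alpha_i^{-n/n_m}\big) = \prod \alpha_i^{-n}$, and the total product is $1$.

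Next I would factor this tuple as a product, over $i = 1,\dots,m-1$, of the tuples $\varepsilon_i$ which have $\alpha_i^{n/n_i}$ in the $i$-th slot, $\alpha_i^{-n/n_m}$ in the $m$-th slot, and $1$ elsewhere; the product of the $\varepsilon_i$ is exactly the element we want (the $m$-th coordinates multiply, the others are isolated). Since $RT_{L/F}(F)$ is a subgroup of $T_{L/F}(F)$, it suffices to show each $\varepsilon_i \in RT_{L/F}(F)$. But $\varepsilon_i$ is supported on the two factors $L_i$ and $L_m$, so it lies in the image of $T_{L_i \times L_m/F}(F) \hookrightarrow T_{L/F}(F)$ (put $1$'s in all the other coordinates), and this inclusion carries $R$-trivial elements to $R$-trivial elements by the additive property of $R$-equivalence (\cite[Property(1),p-1]{gille2010lectures}), since $T_{L/F} \cong T_{L_i\times L_m/F} \times \prod_{j\neq i,m} R_{L_j/F}\mathbb{G}_m$ with the last factors $R$-trivial. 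So it reduces to the two-factor case: $(\alpha^{n/n_i}, \alpha^{-n/n_m}) \in RT_{L_i\times L_m/F}(F)$ where now $\alpha = \alpha_i$.

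For the two-factor reduction I would use Lemma \ref{nthpower} together with a change of the defining parameter. Observe that $(\alpha^{n/n_i}, \alpha^{-n/n_m}) = (\alpha^{n/n_i}, 1)\cdot(1, \alpha^{-n/n_m})$ is not separately admissible since neither factor alone lies in $T_{L_i\times L_m/F}$; instead write it as $\big(N_{L_i/F}(\alpha^{?})\cdots\big)$-style corrections. Concretely, apply Lemma \ref{nthpower} with $N = F$, $L = L_i$ of degree $n_i$, and the element $\alpha \in L_i^\times$ (viewed in $L_i \otimes_F F = L_i$): it gives $N_{L_i/F}(\alpha)^{-1}\alpha^{n_i} \in RT_{L_i/F}(F)$, i.e. $(\alpha^{n_i}N_{L_i/F}(\alpha)^{-1},\, 1) \in RT_{L_i\times L_m/F}(F)$ after recording that the second coordinate must be adjusted by $N_{L_i/F}(\alpha)/\alpha^{n_i}$... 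The cleaner route, which I expect to be the actual argument, is: the element $(\beta, 1, \dots, 1, N_{L_i/F}(\beta)^{-1})$ with the nontrivial entries in positions $i$ and $m$ lies in $RT_{L/F}(F)$ for any $\beta \in L_i^\times$ — this is precisely a "norm relation" element and is $R$-trivial by the description of $RT$ for norm one tori combined with Proposition \ref{only_nonisomorphic_fields_matter_multinorm_mod_R} / Lemma \ref{torus_F_times_separable_algebra_is _R_trivial} applied to the algebra $L_i \times F \times \cdots$; then take $\beta = \alpha_i^{n/n_i} \in F^\times \subset L_i^\times$, for which $N_{L_i/F}(\beta) = \beta^{n_i} = \alpha_i^{n}$, giving the $m$-th coordinate $\alpha_i^{-n}$, not $\alpha_i^{-n/n_m}$. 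To land on $\alpha_i^{-n/n_m}$ rather than $\alpha_i^{-n}$ one uses instead $\beta = \alpha_i^{n/n_i} \in F^\times$ viewed in $L_m^\times$ via the $m$-th factor and exploits that $n/n_m$ divides $n$ with $L_m$-norm multiplying by $n_m$: this is where I must be careful, and I expect the bookkeeping of exponents $n/n_i$ versus $n/n_m$ and the compatibility $n_i \cdot (n/n_i) = n = n_m\cdot(n/n_m)$ to be the only real content. The main obstacle is thus purely combinatorial — arranging the telescoping product of elementary $R$-trivial norm relations so that the exponents collapse correctly — rather than anything geometric; once the two-factor, single-exponent statement "$(\gamma, N_{L_i/F}(\gamma)^{-1}) \in RT_{L_i\times L_m/F}(F)$ for $\gamma \in F^\times$" is isolated, Lemma \ref{nthpower} and the additivity of $R$-equivalence finish it.
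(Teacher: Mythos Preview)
Your reduction to the two-factor case is sound in spirit: the inclusion $T_{L_i\times L_m/F}\hookrightarrow T_{L/F}$ (by inserting $1$'s) is a morphism of algebraic groups and hence preserves $R$-triviality. (Your justification via an isomorphism $T_{L/F}\cong T_{L_i\times L_m/F}\times\prod_{j\neq i,m}R_{L_j/F}\mathbb{G}_m$ is not correct as stated, but you don't need it.)

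The gap is in the two-factor case itself, which you never actually prove. Your candidate building block $(\gamma,\,N_{L_i/F}(\gamma)^{-1})$ for $\gamma\in F^\times$ is not even an element of $T_{L_i\times L_m/F}(F)$: since $\gamma\in F$ one has $N_{L_i/F}(\gamma)=\gamma^{n_i}$, and then the product of norms is $\gamma^{n_i}\cdot\gamma^{-n_in_m}\neq 1$ in general. Likewise, Lemma~\ref{nthpower} concerns the norm-one torus of a \emph{single} Galois extension and produces elements of $T_{L_i/F}$, not of $T_{L_i\times L_m/F}$; there is no evident way to splice two such elements together to hit $(\alpha^{n/n_i},\alpha^{-n/n_m})$ without already knowing the result. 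So the difficulty is not ``purely combinatorial bookkeeping'' --- the elementary $R$-trivial elements you have on hand simply do not sit in the right torus with the right exponents.

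The paper's proof bypasses all of this with a one-line construction: it writes down the rational map
\[
f(t)=\Big(\big(\tfrac{t+\alpha_1}{t+1}\big)^{n/n_1},\dots,\big(\tfrac{t+\alpha_{m-1}}{t+1}\big)^{n/n_{m-1}},\ \prod_{i=1}^{m-1}\big(\tfrac{t+1}{t+\alpha_i}\big)^{n/n_m}\Big)\in T_{L/F}(F(t)),
\]
checks that $f(0)$ is the element in question and $f(\infty)=(1,\dots,1)$, and concludes directly from the definition of $R$-equivalence. No reduction, no prior lemmas. If you want to salvage your approach, the same rational curve (with a single $\alpha$) handles the two-factor case you were stuck on.
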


\begin{proof}

Let us consider $f(t) \in T_{L/F}(F(t))$ given by
$$\displaystyle f(t) =
\left( \left(\frac{t+\alpha_1}{t+1}\right)^{n/{n_1}},
\left( \frac{t+\alpha_2}{t+1}\right)^{n/{n_2}},
\dots , \left(\frac{t+\alpha_{m-1}}{t+1}\right)^{n/{n_{m-1}}},
\prod_{i=1}^{m-1}\left(\frac{t+1}{t+\alpha_i}\right)^{n/{n_{m}}}\right) .$$\\

Then for $t=0$, we get $$\displaystyle f(0) =
(\alpha^{n/n_1}_1, \alpha^{n/n_2}_2, \dots,
\alpha^{n/n_{m-1}}_{m-1},
\prod_{i=1}^{m-1}\alpha^{-n/n_{m}}_i) $$
and for $\displaystyle t= \infty$, we get $f(\infty) = (1,1, \dots, 1)$.\\

Hence $\displaystyle (\alpha^{n/n_1}_1, \alpha^{n/n_2}_2, \dots,
\alpha^{n/n_{m-1}}_{m-1},
\prod_{i=1}^{m-1}\alpha^{-n/n_{m}}_i)$
belongs to $RT_{L/F}(F).$

\end{proof}

Now we study $R$-trivial elements
of $T_{L/F}(F)$ for complete discretely
valued fields $F$ where $L$ is product of
two degree $p$ cyclic extensions.\\

\begin{lemma}
\label{1unramified_1ramified_ext_tori_mod_R_trivial}

Let $F$ be a complete discretely valued field
with residue field $\kappa$.
Let $L_1/F$ be
a degree $p$ unramified cyclic extension and
$L_2/F$ be a degree $p$ ramified cyclic extension.
Assume that $p \neq \text{char}(\kappa).$
Let $L = L_1 \times L_2$. Then
$T_{L/F}(F)/R =\{1\}.$

\end{lemma}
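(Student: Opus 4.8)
The plan is to take an arbitrary element $(a_1,a_2) \in T_{L/F}(F)$, so $a_i \in L_i^\times$ with $N_{L_1/F}(a_1)\,N_{L_2/F}(a_2) = 1$, and show it lies in $RT_{L/F}(F)$. The main structural tool is the additive property of $R$-equivalence together with the explicit generators of $RT_{L/F}(F)$: products of the form $(\sigma_1(b_1) b_1^{-1}, 1)$ and $(1, \sigma_2(b_2) b_2^{-1})$ coming from the two cyclic factors (via \cite[Proposition 15]{colliot1977r}), and the elements produced by Lemma \ref{R_trivial_elements} (with $m=2$, $n_1=n_2=p$, $n=p$), namely $(\alpha, \alpha^{-1})$ for any $\alpha \in F^\times$. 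So modulo $R$ we are free to multiply $a_1$ by norms from $L_1$, multiply $a_2$ by norms from $L_2$, and to shift a scalar from the first coordinate to the second.

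First I would normalize using the unramified factor. Since $L_1/F$ is unramified cyclic of degree $p$ and $p \neq \mathrm{char}(\kappa)$, the norm map $L_1^\times \to F^\times$ is surjective onto units and hits the value group $p\mathbf{Z}$; more precisely, by Lemma \ref{R_trivial_elements_cyclic_norm_one_tori} applied to $L_1/F$, every norm-one element of $L_1$ is $R$-trivial in $T_{L_1/F}$, so modulo $R$ the first coordinate $a_1$ may be replaced by any element with the same norm $N_{L_1/F}(a_1) \in F^\times$. Using the scalar shift $(\alpha,\alpha^{-1})$ from Lemma \ref{R_trivial_elements} I can further arrange, modulo $R$, that $a_1 = 1$ and hence $N_{L_2/F}(a_2) = 1$, i.e. the problem reduces to showing $(1,a_2) \in RT_{L/F}(F)$ whenever $a_2 \in L_2^\times$ has norm $1$. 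But $(1,a_2)$ is $R$-trivial in $T_{L/F}$ exactly when $a_2$ is $R$-trivial in $T_{L_2/F}$, and $L_2/F$ is cyclic, so Lemma \ref{R_trivial_elements_cyclic_norm_one_tori} finishes it.

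The step I expect to be the genuine content — and the main obstacle — is justifying the reduction "$a_1 = 1$ modulo $R$", which requires knowing that the norm from the \emph{unramified} extension $L_1/F$ is surjective enough: concretely that $N_{L_1/F}(L_1^\times) \supseteq \mathcal{O}_F^\times$ and contains a uniformizer to the $p$-th power, so that any $\lambda \in F^\times$ can be written as $N_{L_1/F}(b_1) \cdot \alpha$ with $\alpha$ a suitable correction that the scalar-shift $(\alpha,\alpha^{-1})$ absorbs into the second coordinate. Surjectivity of the norm on units for an unramified extension of a complete discretely valued field with $p \neq \mathrm{char}(\kappa)$ is standard (Hensel's lemma reduces it to surjectivity of the norm for the residue extension $\kappa'/\kappa$, which holds because finite extensions of such residue fields — in our setting, or in general by a counting/Lang-type argument — have surjective norm on the relevant quotient, and one only needs the image modulo $p$-th powers). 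Once that surjectivity is in hand, the rest is bookkeeping with the three families of $R$-trivial generators, and the conclusion $T_{L/F}(F)/R = \{1\}$ follows.
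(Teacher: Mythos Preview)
Your overall strategy --- reduce $(a_1,a_2)$ modulo $R$ to an element with first coordinate $1$, then invoke Lemma~\ref{R_trivial_elements_cyclic_norm_one_tori} for the cyclic extension $L_2/F$ --- is sound and is exactly the outline of the paper's proof. The gap is in how you justify the reduction to $a_1=1$. With the tools you list (the scalar shift $(\alpha,\alpha^{-1})$ from Lemma~\ref{R_trivial_elements} and $R$-triviality of $T_{L_1/F}$), getting the first coordinate to $1$ amounts to finding $\alpha\in F^\times$ with $\alpha^p\,N_{L_1/F}(a_1)=1$, i.e.\ one needs $N_{L_1/F}(a_1)\in F^{\times p}$. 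Surjectivity of $N_{L_1/F}$ on units does not give this, and in any case is \emph{false} for general residue fields $\kappa$: for an unramified degree-$p$ extension it reduces to surjectivity of $N_{\ell_1/\kappa}$ on $\kappa^\times$, which fails already for $\kappa=\mathbf{R}$, $p=2$, and fails modulo $p$-th powers for many local $\kappa$. Since the lemma is stated for arbitrary $\kappa$ with $p\neq\mathrm{char}(\kappa)$, this route cannot work as written.

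The correct input comes from the \emph{ramified} side, and this is what the paper uses. First, comparing valuations (unramifiedness of $L_1$ forces $\mathrm{val}_F(N_{L_1/F}(a_1))\in p\mathbf{Z}$, hence $\mathrm{val}_{L_2}(a_2)\in p\mathbf{Z}$) one reduces via a shift $(\pi^{q},\pi^{-q})$ to the case where $a_1,a_2$ are units. Then, because $L_2/F$ is totally ramified with $p\neq\mathrm{char}(\kappa)$, every unit of $L_2$ is a $p$-th power times an element of $F$ (Hensel on the shared residue field), so $N_{L_2/F}(a_2)\in F^{\times p}$. Hence $N_{L_1/F}(a_1)=N_{L_2/F}(a_2)^{-1}\in F^{\times p}$, and now your scalar shift plus Lemma~\ref{R_trivial_elements_cyclic_norm_one_tori} finishes exactly as you planned. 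Swap this argument in for your norm-surjectivity claim and your proof coincides with the paper's.
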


\begin{proof}

By \cite[Lemma 2.4]{parimala2018local}, there exists $\pi$ a parameter in $F$ such that $L_2 = F(\sqrt[p]{\pi})$
. Then
$\pi$ is also a parameter in $L_1$.
Let $(\mu_1, \mu_2) \in T_{L/F}(F)$.
We can write $\mu_1 = u_1 \pi^{q_1}$
and $\mu_2 = u_2  (\sqrt[p]{\pi})^{q_2}$
where $u_1 \in L_1$ and $u_2 \in L_2$ are
units and $q_1, q_2$ are integers. 
Note that $N_{L_2/F}(\sqrt[p]{\pi})=(-1)^{n_p}\pi$ where
$n_p = 1$ if $p=2$ and $n_p = 2$ if $p>2.$ We have $1 = N_{L_1/F}(\mu_1) N_{L_2/F}(\mu_2)
= N_{L_1/F}(u_1) \pi^{pq_1}
N_{L_2/F}(u_2) (-1)^{n_p q_2}\pi^{q_2}$. Then $q_2 = -pq_1$, and $(\mu_1,\mu_2)=(u_1\pi^{q_1}, u_2\pi^{-q_1})$. Also, $(-1)^{n_p q_2} = 1$
for any prime $p$. \\

By Lemma \ref{R_trivial_elements}, we have 
$(\pi^{q_1},\pi^{-q_1}) \in RT_{L/F}(F).$
Thus it is enough to consider the case when
$\mu_1$ and $\mu_2$ are units in $L_1$
and $L_2$ respectively. 
Let $\ell_2$ be the residue field of $L_2$. Since
$L_2/F$ is totally ramified, there exists $\mu_3\in F$ a unit such that $\overline{\mu_3}=\overline{\mu_2} \in \ell_2$. Since $F$ is complete, $L_2$ is also complete. Thus $\mu_2\mu^{-1}_3$ is a $p^{\rm th}$ power in $L_2$. Thus $\mu_2=\mu_3\mu^{p}_{4}$ for some unit $\mu_4 \in L_2$.
Thus we have $1 = N_{L_1/F}(\mu_1)
[N_{L_2/F}(\mu_3)\mu_4]^{p}.$
Let $\alpha = N_{L_2/F}(\mu_3)\mu_4 \in F$.
Then $(\alpha, \alpha^{-1}) \in RT_{L/F}(F)$ by Lemma \ref{R_trivial_elements}.
Hence $(\mu_1 \alpha, \mu^{p}_3\mu_4 \alpha^{-1})$
is $R$-equivalent to $(\mu_1,\mu_2)$.
We have $N_{L_1/F}(\mu_1 \alpha)= 1$ and
$N_{L_2/F}(\mu^{p}_3\mu_4 \alpha^{-1})=1$.
Now, since $L_1/F$ and $L_2/F$ are cyclic extensions,
by Lemma \ref{R_trivial_elements_cyclic_norm_one_tori},
$T_{L_1/F}(F)/R =\{1\}$ and $T_{L_2/F}(F)/R=\{1\}.$
Thus $(\mu_1 \alpha, \mu^{p}_3\mu_4 \alpha^{-1})$
is in $RT_{L/F}(F)$ and we are done.

 \end{proof}

\begin{lemma}
\label{both_ramified_ext_tori_mod_R_trivial}

Let $F$ be a complete discretely valued field
with residue field $\kappa$.
Let $L_1/F$ and $L_2/F$ be
two degree $p$ ramified cyclic extensions.
Assume that $p \neq \text{char}(\kappa).$
Let $L = L_1 \times L_2$. Then
$T_{L/F}(F)/R =\{1\}.$

\end{lemma}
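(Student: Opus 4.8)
The plan is to prove directly that $T_{L/F}(F)=RT_{L/F}(F)$. If $L_1$ and $L_2$ are isomorphic as $F$-algebras then $L=L_1\times L_2\cong L_1\times L_1$, so Proposition~\ref{only_nonisomorphic_fields_matter_multinorm_mod_R} (applied with the single field $L_1$ taken with multiplicity $2$) together with Lemma~\ref{R_trivial_elements_cyclic_norm_one_tori} give $T_{L/F}(F)/R\cong T_{L_1/F}(F)/R=\{1\}$; hence I may assume $L_1\neq L_2$. The heart of the argument will be to show that the intersection of norm groups satisfies $N_{L_1/F}(L_1^\times)\cap N_{L_2/F}(L_2^\times)=F^{\times p}$. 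Granting this: given $(\mu_1,\mu_2)\in T_{L/F}(F)$, we have $N_{L_1/F}(\mu_1)=N_{L_2/F}(\mu_2)^{-1}$, which lies in that intersection, so $N_{L_1/F}(\mu_1)=\alpha^p$ for some $\alpha\in F^\times$; then $(\mu_1,\mu_2)$ is the product of $(\mu_1\alpha^{-1},1)$, $(1,\mu_2\alpha)$ and $(\alpha,\alpha^{-1})$. The first two elements have trivial norm in the relevant factor, so they lie in $T_{L_1/F}(F)\times\{1\}$ and $\{1\}\times T_{L_2/F}(F)$; these subgroups are $R$-trivial by Lemma~\ref{R_trivial_elements_cyclic_norm_one_tori} and map into $RT_{L/F}(F)$ by functoriality of $R$-equivalence for the homomorphism $T_{L_1/F}\times T_{L_2/F}\to T_{L/F}$, while $(\alpha,\alpha^{-1})\in RT_{L/F}(F)$ by Lemma~\ref{R_trivial_elements}. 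Thus $(\mu_1,\mu_2)\in RT_{L/F}(F)$.

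For the norm computation I would first record the tame structure. Since $p\neq\text{char}(\kappa)$, $p$ is a unit of $\mathcal{O}_F$ and each $L_i/F$ is totally and tamely ramified, so (exactly as in the proof of Lemma~\ref{1unramified_1ramified_ext_tori_mod_R_trivial}) I may write $L_i=F(\theta_i)$ with $\theta_i^p=\pi_i$ a uniformizer of $F$ and $\theta_i$ a uniformizer of $L_i$, and Hensel's lemma gives $\mathcal{O}_{L_i}^\times=(\mathcal{O}_{L_i}^\times)^p\cdot\mathcal{O}_F^\times$ as well as $1+\mathfrak{m}_F\subseteq(\mathcal{O}_F^\times)^p$. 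The first identity yields $N_{L_i/F}(\mathcal{O}_{L_i}^\times)=(\mathcal{O}_F^\times)^p$, and the second that a unit of $\mathcal{O}_F$ lies in $(\mathcal{O}_F^\times)^p$ whenever its residue lies in $\kappa^{\times p}$. Next set $c:=\pi_2/\pi_1\in\mathcal{O}_F^\times$: if $c$ were a $p$-th power in $F$ then, since $\mu_p\subseteq L_1$ ($L_1/F$ being Galois), $\theta_2/\theta_1$ would lie in $L_1$, forcing $\theta_2\in L_1$ and $L_1=L_2$, a contradiction; so $c\notin F^{\times p}$, and because $F^\times/F^{\times p}$ has exponent $p$ the class of $c$ has order exactly $p$ there, i.e. $c^{k}\in(\mathcal{O}_F^\times)^p$ if and only if $p\mid k$.

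Now the norm intersection can be computed. The inclusion $F^{\times p}\subseteq N_{L_1/F}(L_1^\times)\cap N_{L_2/F}(L_2^\times)$ holds because $z^p=N_{L_i/F}(z)$ for $z\in F^\times$. Conversely let $y$ lie in the intersection, written $y=N_{L_1/F}(v_1\theta_1^{k_1})=N_{L_2/F}(v_2\theta_2^{k_2})$ with $v_i\in\mathcal{O}_{L_i}^\times$; since the residue extensions are trivial, the $F$-valuation of $y$ equals $k_1$ and also $k_2$, so $k_1=k_2=:k$. Using $N_{L_i/F}(\theta_i)=(-1)^{p+1}\pi_i$ and $N_{L_i/F}(v_i)\in(\mathcal{O}_F^\times)^p$, dividing the two expressions for $y$ gives $c^{k}\in(\mathcal{O}_F^\times)^p$, hence $p\mid k$ by the previous paragraph, and so $y=N_{L_1/F}(v_1)\big((-1)^{p+1}\pi_1\big)^{k}\in F^{\times p}$. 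This gives $N_{L_1/F}(L_1^\times)\cap N_{L_2/F}(L_2^\times)=F^{\times p}$, which, with the first paragraph, finishes the proof.

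The only genuinely delicate point is this norm-group computation, and inside it the deduction that $L_1\neq L_2$ forces $\pi_2/\pi_1$ to be nontrivial modulo $p$-th powers — which is what makes the ``ramification exponents'' $k$ above automatically divisible by $p$. The remaining manipulations with $R$-trivial elements are routine and parallel those in Lemma~\ref{1unramified_1ramified_ext_tori_mod_R_trivial}.
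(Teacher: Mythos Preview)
Your proof is correct and follows essentially the same route as the paper's. Both arguments dispose of the isomorphic case via Proposition~\ref{only_nonisomorphic_fields_matter_multinorm_mod_R} and Lemma~\ref{R_trivial_elements_cyclic_norm_one_tori}, then exploit the same two facts about totally tamely ramified degree-$p$ extensions: that norms of units are $p$-th powers, and that the valuation constraint together with $\pi_2/\pi_1\notin F^{\times p}$ forces the uniformizer exponent to be divisible by $p$. The only organizational difference is that you package these facts into the clean standalone statement $N_{L_1/F}(L_1^\times)\cap N_{L_2/F}(L_2^\times)=F^{\times p}$ and then decompose $(\mu_1,\mu_2)$ in one step, whereas the paper carries the element $(\mu_1,\mu_2)$ through the valuation and unit reductions directly before invoking the same endgame as in Lemma~\ref{1unramified_1ramified_ext_tori_mod_R_trivial}. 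Your formulation is arguably tidier and makes the reason for $R$-triviality more transparent, but the mathematical content is the same.
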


\begin{proof}

By \cite[Lemma 2.4]{parimala2018local}, there exists a parameter $\pi \in F$ such that 
$L_1 = F(\sqrt[p]{\pi})$. Similarly, $L_2 = F(\sqrt[p]{v \pi})$ for some unit $v \in F^{\times}.$
We can assume that $v \notin F^{\times p}$, 
otherwise both extensions are isomorphic
and by Proposition \ref{only_nonisomorphic_fields_matter_multinorm_mod_R},
$T_{L/F}(F)/R \simeq T_{L_1/F}(F)/R.$
By Lemma \ref{R_trivial_elements_cyclic_norm_one_tori},
since $L_1/F$ is cyclic,
we have $T_{L_1/F}(F)/R =  \{1\}$.
Hence we are done.\\

Let $(\mu_1, \mu_2) \in T_{L/F}(F).$
We can write $\mu_1 = u_1 (\sqrt[p]{\pi})^{q_1}$
and $\mu_2 = u_2 (\sqrt[p]{v \pi})^{q_2}$ where
$u_1 \in L_1$ and $u_2 \in L_2$ are units and
$q_1, q_2$ are integers.
Note that $N_{L_1/F}(\sqrt[p]{\pi})=(-1)^{n_p}\pi$ and 
$N_{L_2/F}(\sqrt[p]{v\pi})=(-1)^{n_p}v\pi$, where $n_p=1$ if $p=2$ and $n_p=2$ if $p>2$. 
Thus we have $$1 = N_{L_1/F}(u_1) (-1)^{n_pq_1}\pi^{q_1}
N_{L_2/F}(u_2) (-1)^{n_pq_2}v^{q_2} \pi^{q_2}.$$
This implies $q_2 = -q_1.$
Consequently, $(-1)^{n_pq_1}(-1)^{n_pq_2} = 1.$
Also, since the extensions are totally
ramified, $N_{L_1/F}(u_1), N_{L_2/F}(u_2)
\in F^{\times p}$. Hence $v^{q_2} \in F^{\times p}$
and $p$ divides $q_2$ since $v \notin F^{\times p}.$
Let $q_1 = pq$ for some integer $q$.\\

By Lemma \ref{R_trivial_elements}, we have
$((v\pi)^{q}, (v\pi)^{-q}) \in RT_{L/F}(F).$
Thus it is enough to show that
$(\mu_1 (v\pi)^{-q}, \mu_2 (v\pi)^{q})
= (u_1v^{-q}, u_2) \in RT_{L/F}(F).$ Thus
we can assume that $\mu_1$ and $\mu_2$
are units in $L_1$ and $L_2$ respectively.
Furthermore, since
$L_2/F$ is totally ramified, we have
$\mu_2 = \mu_3^{p}\mu_4$ where
$\mu_3 \in L_2$ is a unit and $\mu_4\in F$
is a unit. Continuing as in the last half of the proof of
Lemma \ref{1unramified_1ramified_ext_tori_mod_R_trivial},
we get that $(\mu_1, \mu_2) \in RT_{L/F}(F).$

\end{proof}

\begin{lemma}
\label{both_unramified_ext_tori_mod_R_trivial}
 Let $F$ be a complete discretely valued field with residue field $\kappa$. Let $L_1/F$ and $L_2/F$ be two unramified cyclic extensions of degree $p$. Assume that $p \neq \text{char}(\kappa)$. Let $L=L_1\times L_2$. For $i=1, 2$, let $l_i$ denote the residue field of $L_i$. Let $l= l_1 \times l_2$. 
 Then $T_{L/F}(F)/R \simeq  T_{\ell/\kappa}(\kappa)/R.$ 
 In particular, if $T_{l/\kappa}(\kappa)/R = \{1\}$, then $T_{L/F}(F)/R = \{1\}.$

\end{lemma}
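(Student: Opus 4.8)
The plan is to reduce the statement over $F$ to the analogous statement over the residue field $\kappa$, using the structure theory of units in the unramified extensions $L_1, L_2$ and the fact that the local-global behaviour of $R$-triviality is governed by residues. First I would fix a parameter $\pi$ of $F$, which is simultaneously a parameter of $L_1$ and $L_2$ since both extensions are unramified. Given $(\mu_1, \mu_2) \in T_{L/F}(F)$, write $\mu_i = u_i \pi^{q_i}$ with $u_i \in L_i^\times$ a unit and $q_i \in \mathbf{Z}$. The norm relation $N_{L_1/F}(\mu_1) N_{L_2/F}(\mu_2) = 1$ gives, on comparing $\pi$-valuations, $p q_1 + p q_2 = 0$, hence $q_2 = -q_1$. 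By Lemma \ref{R_trivial_elements} the element $(\pi^{q_1}, \pi^{-q_1}) \in RT_{L/F}(F)$, so modulo $R$-equivalence we may assume $\mu_1, \mu_2$ are units in $L_1, L_2$ respectively, and then $N_{L_1/F}(u_1) N_{L_2/F}(u_2) = 1$ is a relation among units of $F$.

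Next I would pass to residues. Reduction modulo $\pi$ gives residue fields $l_1, l_2$ of $L_1, L_2$ (which are degree $p$ cyclic extensions of $\kappa$ since $L_i/F$ is unramified), and the reductions $\bar u_i \in l_i^\times$ satisfy $N_{l_1/\kappa}(\bar u_1) N_{l_2/\kappa}(\bar u_2) = 1$, i.e. $(\bar u_1, \bar u_2) \in T_{l/\kappa}(\kappa)$ where $l = l_1 \times l_2$. By hypothesis $T_{l/\kappa}(\kappa)/R = \{1\}$, so $(\bar u_1, \bar u_2)$ is $R$-trivial over $\kappa$; concretely, by the description of $R$-trivial elements it is a product of terms of the form $(\text{image of }a_j)$ coming from a chain of elementary $R$-equivalences, or more usefully it lies in the subgroup generated by $\{(\tau(a)^{-1} a, 1) : a \in l_1, \tau \in \mathrm{Gal}(l_1/\kappa)\}$ and the analogous terms for $l_2$ together with the diagonal $(\beta, \beta^{-1})$ with $\beta \in \kappa^\times$, up to the precise generating set given in Section \ref{Requivalence}. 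The point is to lift each such residual generator to an $R$-trivial element of $T_{L/F}(F)$: a relation $\bar u_1 = \bar v$ with $\bar v \in RT_{l_1/\kappa}(\kappa)$ lifts because units of $L_i$ surject onto units of $l_i$, and $R$-trivial elements, being generated by explicit rational curves, lift to $R$-trivial elements after choosing lifts of the parameters/coordinates.

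Concretely I would argue as follows. Write $(\bar u_1, \bar u_2) = (\bar w_1, \bar\beta)\cdot(\bar\beta^{-1}\bar u_1 \bar w_1^{-1} \cdot \text{(stuff)}, \ldots)$ — rather than this, it is cleaner to use Lemma \ref{R_trivial_elements_cyclic_norm_one_tori} twice: since $T_{l_i/\kappa}(\kappa) = RT_{l_i/\kappa}(\kappa)$, the hypothesis $T_{l/\kappa}(\kappa)/R = \{1\}$ tells us $(\bar u_1, \bar u_2)$ lies in the subgroup of $T_{l/\kappa}(\kappa)$ generated by $RT_{l_1/\kappa}(\kappa)\times\{1\}$, $\{1\}\times RT_{l_2/\kappa}(\kappa)$, and the image of the diagonal $\beta\mapsto(\beta,\beta^{-1})$, $\beta\in\kappa^\times$. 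Each generator lifts: lift $\beta$ to a unit $b\in F^\times$, use Lemma \ref{R_trivial_elements} to see $(b, b^{-1})\in RT_{L/F}(F)$; lift an element $\tau(a)^{-1}a$ of $RT_{l_1/\kappa}(\kappa)$ by lifting $a$ to a unit of $L_1$, noting $(\sigma(\tilde a)^{-1}\tilde a, 1)\in RT_{L/F}(F)$ by \cite[Proposition 15]{colliot1977r}. Multiplying by the appropriate such $R$-trivial lifts, we reduce $(\mu_1,\mu_2)$ to an element $(\mu_1', \mu_2')\in T_{L/F}(F)$ with $\mu_i'$ a unit in $L_i$ reducing to $1$ in $l_i$, that is, $\mu_i' \equiv 1 \pmod{\pi}$.

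The remaining step, which I expect to be the main obstacle, is to show that an element $(\mu_1', \mu_2')\in T_{L/F}(F)$ with each $\mu_i'$ a unit congruent to $1$ modulo $\pi$ is already $R$-trivial. Here I would invoke completeness: units congruent to $1$ mod $\pi$ form a pro-$p$ (or more precisely a $\pi$-adically complete) group, and since $p\neq\mathrm{char}(\kappa)$, such a unit is a $p$-th power, say $\mu_1' = \rho_1^p$ with $\rho_1\in L_1$, $\rho_1\equiv 1\pmod\pi$; then $N_{L_1/F}(\mu_1') = N_{L_1/F}(\rho_1)^p$ and similarly $\mu_2' = \rho_2^p$, so $N_{L_1/F}(\rho_1)^p N_{L_2/F}(\rho_2)^p = 1$. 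Since $F$ is complete with $p$-th roots of $1$-units unique, $N_{L_1/F}(\rho_1) N_{L_2/F}(\rho_2)$ is a $p$-th root of $1$ among $1$-units, hence equals $1$; set $\gamma = N_{L_2/F}(\rho_2) = N_{L_1/F}(\rho_1)^{-1}$. By Lemma \ref{R_trivial_elements}, $(\gamma,\gamma^{-1})\in RT_{L/F}(F)$, and after multiplying we are reduced to the case $N_{L_1/F}(\mu_1') = N_{L_2/F}(\mu_2') = 1$ separately; then cyclicity of $L_1/F$ and $L_2/F$ together with Lemma \ref{R_trivial_elements_cyclic_norm_one_tori} finish it exactly as in the proofs of Lemmas \ref{1unramified_1ramified_ext_tori_mod_R_trivial} and \ref{both_ramified_ext_tori_mod_R_trivial}. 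The delicate point throughout is making sure the chosen lifts are compatible with the norm relations so that one genuinely stays inside $T_{L/F}$ at each stage; I would handle this by always correcting with a diagonal $(\gamma,\gamma^{-1})$ term from Lemma \ref{R_trivial_elements} after each lift.
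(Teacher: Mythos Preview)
Your approach is genuinely different from the paper's. The paper's proof is a short cohomological argument: it takes a flasque resolution $1 \to S \to Q \to T_{l/\kappa} \to 1$, uses the identifications $T_{l/\kappa}(\kappa)/R \simeq H^1(\kappa,S)$ and $T_{L/F}(F)/R \simeq H^1(F,S)$ from \cite{colliot1987principal}, and then invokes the specialization isomorphism $H^1(F,S)\simeq H^1(\kappa,S)$ for flasque tori over a complete discrete valuation ring (from \cite{colliot2019local}). No elementwise manipulation is needed.

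Your elementwise strategy is natural, and the reduction to units, the passage to residues, and the final paragraph handling $1$-units via $p$-th powers and Lemma~\ref{R_trivial_elements_cyclic_norm_one_tori} are all fine. The gap is in the lifting step. You assert that the hypothesis $T_{l/\kappa}(\kappa)/R=\{1\}$ forces $(\bar u_1,\bar u_2)$ to lie in the subgroup generated by $T_{l_1/\kappa}(\kappa)\times\{1\}$, $\{1\}\times T_{l_2/\kappa}(\kappa)$, and the diagonals $(\beta,\beta^{-1})$. But any element of that subgroup has first coordinate with norm in $\kappa^{\times p}$, whereas a general $(\bar u_1,\bar u_2)\in T_{l/\kappa}(\kappa)$ need only satisfy $N(\bar u_1)N(\bar u_2)=1$. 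For instance, over a finite field $\kappa$ (where the hypothesis holds automatically, since $H^1(\kappa,S)=0$ for any torus $S$) the norm $N_{l_1/\kappa}$ is surjective and $\kappa^{\times p}$ has index $p$ in $\kappa^\times$, so most elements of $T_{l/\kappa}(\kappa)$ do not lie in your subgroup. The description of $RT$ as generated by $a^{-1}\sigma(a)$ from \cite[Proposition 15]{colliot1977r} applies to $R^1_{E/F}\mathbb{G}_m$ for a single Galois \emph{field} extension $E/F$; it does not give you this decomposition for the multinorm torus $T_{l_1\times l_2/\kappa}$.

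What you actually need is that the reduction map $RT_{L/F}(F)\to RT_{l/\kappa}(\kappa)$ is surjective, i.e.\ that $R$-triviality over $\kappa$ lifts to $R$-triviality over $F$. This can be done, but it requires lifting rational curves in $T_{l/\kappa}$ to rational curves in the smooth $A$-model of the torus over the complete ring $A$, which is precisely the content of the specialization isomorphism the paper invokes. Without that input, your argument does not close.
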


\begin{proof}
Let $\nu$ be the discrete valuation on $F$. Let $A= \{x\in F| \nu(x)\geq 0\}$.
Then $A$ is a complete discrete valuation ring with fraction field $F$ and residue field $\kappa$. In particular, $A$ is a complete regular local ring. Since $L_i/F$ are unramified extensions, the multinorm torus $T_{L/F}$ is defined over $A$. Now the result follows from \cite[Proposition 2.2]{gille2004specialization}.

\end{proof}

\begin{cor}
\label{residue_to_cdvf}
Let $K$ be a complete discretely valued field with residue field $\kappa$. Let $K_1,K_2$ be two degree $p$ \'etale algebras over $K$.  Assume that 
$\text{char}(\kappa)\neq p$. If for every choice of two degree $p$ Galois extensions $\kappa_1,\kappa_2$ of $\kappa$, 
$T_{\kappa_1\times \kappa_2/\kappa}(\kappa)/R=\{1\}$, then 
$T_{K_1\times K_2/K}(K)/R=\{1\}$.
\end{cor}

\begin{proof}
If either of the \'etale $K$-algebras, say $K_1$, is not a field extension over $K$, then $K_1$ is a product of $p$ copies of $K$. Then $T_{K_1\times K_2/K}(K)/R=\{1\}$. Thus we may assume that both $K_1$ and $K_2$ are degree $p$ field extensions. If exactly one of these extensions is unramified then we have the result by Lemma \ref{1unramified_1ramified_ext_tori_mod_R_trivial}.  If both of these extensions are ramified, we have the result by Lemma \ref{both_ramified_ext_tori_mod_R_trivial}. If both the extensions are unramified, the result follows from Lemma \ref{both_unramified_ext_tori_mod_R_trivial}.

\end{proof}

\begin{prop}
\label{branch}

Let $K$ be a complete discretely valued field with
residue field $\kappa $ and
$F$ be
the function field of a curve over $K.$
Let $\XX$ be a regular proper model of $F$ with
the reduced special fibre $X$
a union of regular curves with normal crossings.
Let $L_1/F$ and $L_2/F$ be two cyclic extensions of degree $p$. Let $L=L_1\times L_2$.  Assume that $p \neq \text{char}(\kappa)$.
Let $P \in X$ be a closed point and
$U$ an irreducible open subset of
$X$ with $P$ in the closure of $U.$
Let $L = L_1 \times L_2$.
If for any two Galois extensions
 $l_1,l_2$ of $\kappa(P)$ of degree $p,$
 $$\displaystyle T_{l_1\times l_2/\kappa(P)}(\kappa(P))/R =
\{1\}$$ then
 $$ \displaystyle T_{L/F}(F_{P,U})/R = \{1\}. $$

\end{prop}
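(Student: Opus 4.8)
The plan is to reduce to the three local lemmas already proved (Lemmas \ref{1unramified_1ramified_ext_tori_mod_R_trivial}, \ref{both_ramified_ext_tori_mod_R_trivial}, \ref{both_unramified_ext_tori_mod_R_trivial}) by analyzing the behaviour of the extensions $L_1/F$ and $L_2/F$ at the branch $(P,U)$. First I would observe that $F_{P,U}$ is a complete discretely valued field whose residue field is a complete discretely valued field with residue field $\kappa(P)$; more precisely, if $\pi_1$ is a prime defining $\eta = \overline{U}$ at $P$ and $\pi_2$ a complementary parameter, then $F_{P,U}$ is the completion of $F_P$ along $\eta$, so its residue field is (a finite extension of) $\kappa(\eta)_P$, which in turn has residue field $\kappa(P)$. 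Since $p \neq \operatorname{char}(\kappa)$ and $[L_i\colon F] = p$, each $L_{i,P,U} = L_i \otimes_F F_{P,U}$ is either $F_{P,U}$ itself, or a product of copies of a degree $p$ cyclic field extension of $F_{P,U}$; in the split case Lemma \ref{torus_F_times_separable_algebra_is _R_trivial} (together with Proposition \ref{only_nonisomorphic_fields_matter_multinorm_mod_R}) already gives $T_{L/F}(F_{P,U})/R = \{1\}$, so I may assume both $L_{i,P,U}$ are degree $p$ cyclic field extensions of $F_{P,U}$.

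Next I would split into cases according to whether $L_{1,P,U}/F_{P,U}$ and $L_{2,P,U}/F_{P,U}$ are ramified or unramified with respect to the discrete valuation of $F_{P,U}$ (the one coming from $\eta$). If both are unramified, I would apply Lemma \ref{both_unramified_ext_tori_mod_R_trivial}: the residue field of $F_{P,U}$ is itself a complete discretely valued field, say $E$, with residue field $\kappa(P)$, and the induced residue extensions $l_1/E$, $l_2/E$ are degree $p$ cyclic (or trivial). By the hypothesis applied to $\kappa(P)$, together with Lemma \ref{both_unramified_ext_tori_mod_R_trivial} applied over $E$ (whose residue field is $\kappa(P)$), we get $T_{l_1 \times l_2/E}(E)/R = \{1\}$; feeding this back into Lemma \ref{both_unramified_ext_tori_mod_R_trivial} over $F_{P,U}$ gives the claim. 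If exactly one is ramified, Lemma \ref{1unramified_1ramified_ext_tori_mod_R_trivial} applies directly over $F_{P,U}$. If both are ramified, Lemma \ref{both_ramified_ext_tori_mod_R_trivial} applies directly. In every case $T_{L/F}(F_{P,U})/R = \{1\}$.

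The main subtlety — the part I expect to require the most care — is the bookkeeping in the unramified/unramified case: one must verify that the residue field $E$ of $F_{P,U}$ is again complete discretely valued with residue field $\kappa(P)$ (so that Lemma \ref{both_unramified_ext_tori_mod_R_trivial} can be invoked a second time, one level down), and that the residue extensions of the $L_{i,P,U}/F_{P,U}$ are the expected degree $p$ cyclic extensions of $E$, unramified over the valuation of $E$, so that the hypothesis on $\kappa(P)$ is exactly what is needed. A secondary point is to make sure that when $L_i/F$ has its ramification on $\XX$ only along $\pi_1, \pi_2$ (as arranged in the global argument) the local picture at the branch is indeed governed by these two parameters and hence falls into one of the three enumerated cases; this is where the normal crossings hypothesis on $X$ is used. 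Once these verifications are in place, the proof is just a finite case check invoking the three lemmas.
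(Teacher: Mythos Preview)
Your overall strategy is the paper's strategy: reduce to the three local lemmas by a ramified/unramified case analysis over the complete discretely valued field $F_{P,U}$, and then descend once more to its residue field. There is, however, a genuine gap in your treatment of the unramified/unramified case.

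You write that you need to verify that the residue extensions $l_i/E$ (with $E=\kappa(U)_P$ the residue field of $F_{P,U}$) are \emph{unramified} over the valuation of $E$, so that Lemma~\ref{both_unramified_ext_tori_mod_R_trivial} can be applied a second time. This verification fails in general: the proposition places no hypothesis on the ramification locus of $L_i/F$ relative to $\XX$ (only $X$ itself is assumed to have normal crossings), so the residue extensions $l_i/E$ may perfectly well be ramified over $E$. Your ``secondary point'' about $\pi_1,\pi_2$ imports an assumption from the later global argument that is not present in the statement you are proving. The fix is simply to run the \emph{same} three-way case split once more over $E$: if at least one of $l_1/E$, $l_2/E$ is ramified, Lemmas~\ref{1unramified_1ramified_ext_tori_mod_R_trivial} and~\ref{both_ramified_ext_tori_mod_R_trivial} applied over $E$ give $T_{l_1\times l_2/E}(E)/R=\{1\}$ directly; only when both $l_i/E$ are unramified do you invoke the hypothesis on $\kappa(P)$ together with Lemma~\ref{both_unramified_ext_tori_mod_R_trivial} over $E$. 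This is exactly how the paper proceeds. Once you insert this second case split, your argument is complete and coincides with the paper's.
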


\begin{proof}Let $\kappa(U)$ be the function field of $U$.
Since $X$ is a union of regular curves, the closed point $P$ gives a discrete valuation on $\kappa(U)$.
Let $\kappa(U)_P$ denote the completion of $\kappa(U)$ at $P$.
Then the branch field $F_{P,U}$ is a complete discretely valued field with residue field $\kappa(U)_P$, and $\kappa(P)$ is the residue field of $\kappa(U)_P$. Thus applying Corollary \ref{residue_to_cdvf} twice, we obtain the result.

\end{proof}

\begin{cond}
\label{A}
We say that a field $\kappa$ satisfies Condition \ref{A}  with respect to a prime $p$ if for all finite extensions $\kappa'/\kappa$ and any two degree $p$ cyclic extensions $l_1,l_2$ of $\kappa'$, $T_{l_1\times l_2/\kappa'}(\kappa')/R = \{1\}$.\\
\end{cond}

\begin{remark}
\label{cd_leq_one}
If $\kappa$ is a field of cohomological dimension $\leq 1$ (for example, if $\kappa$ is an algebraically closed field 
or a finite field) and $\text{char}(\kappa)\neq p$ 
then $\kappa$ satisfies Condition \ref{A} with respect to $p$ (see \cite[Corollaire 6, p-202]{colliot1977r}). 
\end{remark}

\begin{cor}
\label{Sha_P_trivial_multinorm_two_degree_p_ext}

Let $K$ be a complete discretely valued field
with residue field $\kappa.$
Let $F$ be the function field of a curve over $K$.
Let $L_1,L_2$ be two degree $p$
cyclic extensions of $F$.
Assume that $p \neq \text{char}(\kappa).$
Let $L = L_1 \times L_2$.
If $\kappa$ satisfies Condition \ref{A} with respect to $p$, then for any regular proper model
$\XX$ such that the reduced special fibre $X$ is a union of
regular curves with normal crossings, and any choice of $\PP$,
$\Sh_{\PP}(F,T_{L/F}) = 0$
and $\Sh_{X}(F,T_{L/F})=0$.

\end{cor}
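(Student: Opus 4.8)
The plan is to combine the branch-field computation in Proposition \ref{branch} with the tree-independent machinery of Section \ref{Semi-global fields and Patching}, and then pass from $\Sh_\PP$ to $\Sh_X$ via the known union formula.

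First I would fix a regular proper model $\XX$ with reduced special fibre $X$ a union of regular curves with normal crossings, and a finite set $\PP$ of closed points containing all the singular points of $X$. The key observation is that Condition A for $\kappa$ with respect to $p$ passes to the residue field $\kappa(P)$ at every closed point $P\in X$: indeed $\kappa(P)$ is a finite extension of $\kappa$ (since $X$ is a curve over the residue field, or rather $\kappa(P)$ is finite over the residue field of $K$), so any finite extension of $\kappa(P)$ is a finite extension of $\kappa$, and the defining property of Condition A is exactly a statement about all finite extensions. Hence $\kappa(P)$ satisfies Condition A with respect to $p$, which gives in particular the hypothesis $T_{l_1\times l_2/\kappa(P)}(\kappa(P))/R=\{1\}$ needed in Proposition \ref{branch}. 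Applying that proposition, I conclude $T_{L/F}(F_{P,U})/R=\{1\}$ for every branch $(P,U)\in\mathcal B$.

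Next I would feed this vanishing into the double-coset description of $\Sh_\PP$. By \cite[Theorem 3.1(b)]{colliot2019local} there is an isomorphism
$$\prod_{U\in\mathcal U}T_{L/F}(F_U)/R\;\backslash\;\prod_{(P,U)\in\mathcal B}T_{L/F}(F_{P,U})/R\;/\;\prod_{P\in\PP}T_{L/F}(F_P)/R\;\simeq\;\Sh_\PP(F,T_{L/F}).$$
Since every factor $T_{L/F}(F_{P,U})/R$ in the middle product is trivial, the whole middle product is trivial, hence the double quotient is trivial, so $\Sh_\PP(F,T_{L/F})=0$. This holds for every admissible $\PP$. Finally, by \cite[Corollary 5.9.]{harbater2015local} we have $\Sh_X(F,T_{L/F})=\bigcup_\PP\Sh_\PP(F,T_{L/F})$, where the union is over all finite $\PP$ containing the singular points; since each term vanishes, $\Sh_X(F,T_{L/F})=0$ as well.

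The only genuinely delicate point is the descent of Condition A to $\kappa(P)$, and more precisely making sure that the hypothesis actually supplies what Proposition \ref{branch} asks for: Proposition \ref{branch} requires the $R$-triviality statement for two degree-$p$ \emph{Galois} (i.e.\ cyclic, since degree $p$) extensions of $\kappa(P)$ itself, and Condition A for $\kappa$ gives the statement for all finite extensions $\kappa'/\kappa$ — so one just takes $\kappa'=\kappa(P)$, using that $\kappa(P)/\kappa$ is finite. (Strictly, $\kappa(P)$ is finite over the residue field of the complete discretely valued base, which is $\kappa$; this is where one uses that $X$ is the special fibre of a model over a DVR with residue field $\kappa$.) Everything else is a direct citation of the structural results already assembled in the excerpt, so no further work is needed; in particular the hypothesis $p\neq\operatorname{char}(\kappa)$ is exactly what is needed to invoke Proposition \ref{branch}.
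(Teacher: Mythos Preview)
Your proposal is correct and follows essentially the same approach as the paper: apply Proposition \ref{branch} (using that Condition A on $\kappa$ supplies the needed hypothesis at each $\kappa(P)$) to get $T_{L/F}(F_{P,U})/R=\{1\}$ on every branch, then invoke the double-coset description \cite[Theorem 3.1]{colliot2019local} for $\Sh_\PP$ and finally \cite[Corollary 5.9]{harbater2015local} for $\Sh_X$. Your write-up is in fact more explicit than the paper's own proof about why Condition A descends to $\kappa(P)$, which is a good clarification.
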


\begin{proof}

Let $\XX$ and $X$ as in the statement of the corollary. 
Then by Proposition \ref{branch} and our assumption on $\kappa$
we get that for any branch $(P,\eta)$,
$T_{L/F}(F_{P,\eta})/R =\{1\}$.
Hence, for any choice of $\PP$,
$\Sh _{\PP}(F, T_{L/F}) = 0$
by Theorem \ref{Sha_p_in_terms_of_mod_R}.
Thus $\Sh_{X}(F, T_{L/F}) = 0$
by Theorem \ref{HHK_theorem1}. 
\end{proof} 
As a consequence,
we have the following theorem$\colon$

\begin{theorem}
\label{Sha_dvr_trivial_multinorm_two_degree_p_ext}
Let $K$ be a complete discretely valued field
with residue field $\kappa.$
Let $F$ be the function field of a curve over $K$.
Let $L_1,L_2$ be two degree $p$
cyclic extensions of $F$.
Assume that $p \neq \text{char}(\kappa).$
Let $L= L_1 \times L_2.$
If $\kappa$ satisfies Condition \ref{A} with respect to $p$, then $\Sh(F, T_{L/F}) = 0.$

\end{theorem}

\begin{proof}
 The result follows from Corollary \ref{Sha_P_trivial_multinorm_two_degree_p_ext} and Theorem \ref{union_of_Sha_X_equals_Sha_dvr_multinorm}.
\end{proof}

We provide a basic lemma here which will be used in the next result. 

\begin{lemma}
\label{existence_of_M}
Let $F$ be a field and $\overline{F}$ be an algebraic closure of $F$. Let $F \subset L_1,L_2 \subseteq \overline{F}$ be fields such that both $L_1/F$ and $L_2/F$ are degree $p$ separable extensions. Then there exists a field extension $M/F$ such that $M\subseteq \overline{F}$ with $[M:F]$ coprime to $p$, and $L_1M/M$ and $L_2M/M$ are both degree $p$ Galois extensions.
\end{lemma}

\begin{proof}
Let $N_1\subseteq \overline{F}$ be the Galois closure of $L_1/F$. Then $G_1:=\mathsf{Gal}(N_1/F)\subseteq S_p$ and there exists an order $p$ subgroup $H_1$ of $G_1$. Let $M_1$ be the fixed field of $H_1$. Then $p\nmid [M_1:F]$ and $[L_1M_1:M_1]\leq [L_1:F]=p$, which implies that $L_1M_1=N_1$, which is a degree $p$ Galois extension over $M_1$.

Note that $L_2M_1/M_1$ is also a degree $p$ extension. If this extension is Galois, we are done. If not, then arguing as in the above paragraph, we get that there exists a field  $M$, $M_1 \subseteq M \subseteq \overline{F}$, such that the extension $M/M_1$ is of degree coprime to $p$ and $L_2M/M$ is a Galois extension of degree $p$. Since $M/M_1$ is of degree coprime to $p$, $L_1M/M$ is also a Galois extension of degree $p$. Thus the extension $M/F$ has the required properties.

\end{proof}

\begin{cor}
\label{any_two_deg_p}

Let $K$ be a complete discretely valued field
with residue field $\kappa.$
Let $F$ be the function field of a curve over $K$.
Let $L_1,L_2$ be two degree $p$
separable extensions of $F$.
Assume that $p \neq \text{char}(\kappa).$
Let $L= L_1 \times L_2.$
If $\kappa$ satisfies Condition \ref{A} with respect to $p$, then $\Sh(F, T_{L/F}) = 0.$

\end{cor}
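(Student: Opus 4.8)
The approach is a prime-to-$p$ descent reducing everything to the cyclic case of Theorem~\ref{Sha_dvr_trivial_multinorm_two_degree_p_ext}. The first point to record is that $H^1(F,T_{L/F})$ is $p$-torsion: from the exact sequence $1\to T_{L/F}\to R_{L/F}\mathbb{G}_m\xrightarrow{N}\mathbb{G}_m\to 1$ together with Hilbert~90 and Shapiro's lemma one gets $H^1(F,T_{L/F})\cong F^\times/\bigl(N_{L_1/F}(L_1^\times)\cdot N_{L_2/F}(L_2^\times)\bigr)$, and since $[L_i:F]=p$ each factor $N_{L_i/F}(L_i^\times)$ contains $N_{L_i/F}(F^\times)=F^{\times p}$, so this group has exponent dividing $p$. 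It therefore suffices to find a finite separable extension $M/F$ with $[M:F]$ prime to $p$ and $\Sh(M,T_{L/F}\times_F M)=0$: given $x\in\Sh(F,T_{L/F})$, every discrete valuation of $M$ restricts to a discrete valuation of $F$ over whose completion $x$ becomes trivial, so $\mathrm{res}_{M/F}(x)\in\Sh(M,T_{L/F}\times_F M)=0$; then $[M:F]\cdot x=\mathrm{cor}_{M/F}\,\mathrm{res}_{M/F}(x)=0$, and since $x$ is also $p$-torsion while $[M:F]$ is prime to $p$, we get $x=0$.

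To construct $M$ I would arrange that each $L_i\otimes_F M$ is either a cyclic degree $p$ field extension of $M$ or has a direct factor isomorphic to $M$, building it one extension at a time so that the total degree stays prime to $p$. Since the $p$-adic valuation of $p!$ equals $1$, the Galois closure $E_1$ of $L_1/F$ has a Sylow $p$-subgroup $P_1$ of order exactly $p$; writing $H_1$ for a point stabiliser of $\mathrm{Gal}(E_1/F)\le S_p$, one has $H_1\cap P_1=1$ and $H_1P_1=\mathrm{Gal}(E_1/F)$, so $M_1:=E_1^{P_1}$ has degree prime to $p$ over $F$ and $L_1\otimes_F M_1\cong E_1$ is cyclic of degree $p$ over $M_1$. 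Next examine the \'etale $M_1$-algebra $L_2\otimes_F M_1$. If it is a field, the same construction inside its Galois closure over $M_1$ produces $M\supseteq M_1$, still of degree prime to $p$ over $F$, over which $L_2\otimes_F M$ is cyclic of degree $p$, while $L_1\otimes_F M=E_1\otimes_{M_1}M$ --- a base change of a cyclic prime-degree extension --- is either again cyclic of degree $p$ or split. If instead $L_2\otimes_F M_1$ is a product of field extensions of degrees $d_j<p$, take $M$ to be the compositum over $M_1$ of their Galois closures; its degree divides $\prod d_j!$ and is hence prime to $p$, and $L_2\otimes_F M\cong M^{\times p}$. When both $L_i\otimes_F M$ are cyclic degree $p$ fields, $\Sh(M,T_{L/F}\times_F M)=0$ by Theorem~\ref{Sha_dvr_trivial_multinorm_two_degree_p_ext} applied over $M$, which is again the function field of a curve over a complete discretely valued field whose residue field is a finite extension of $\kappa$ and therefore still satisfies Condition~A with respect to $p$. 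In every other case $L\otimes_F M$ has a direct factor isomorphic to $M$, so $T_{L/F}\times_F M$ is isomorphic to a quasi-trivial torus (a product of Weil restrictions of $\mathbb{G}_m$), and $H^1(M,T_{L/F}\times_F M)=0$. In all cases $\Sh(M,T_{L/F}\times_F M)=0$, and together with the first paragraph this gives $\Sh(F,T_{L/F})=0$.

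The only delicate step, and effectively the main obstacle, is the construction of $M$: one must force both extensions to become cyclic or split while keeping the total degree prime to $p$, which is why the reduction is performed sequentially rather than through a single compositum of the $M_i$, and why it is essential that Condition~A was formulated to pass to finite residue extensions. Everything else is the familiar restriction--corestriction argument, made effective by the observation that $H^1(F,T_{L/F})$ has exponent dividing $p$.
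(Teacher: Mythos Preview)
Your proof is correct and follows essentially the same strategy as the paper: reduce to the cyclic case of Theorem~\ref{Sha_dvr_trivial_multinorm_two_degree_p_ext} via restriction--corestriction along a finite separable extension $M/F$ of degree prime to $p$, using that $H^1(F,T_{L/F})$ has exponent dividing $p$. Your construction of $M$ is in fact more careful than the paper's one-line sketch, which simply asserts the existence of such an $M$ (and moreover claims $M/F$ can be taken Galois, a point you rightly do not rely on); your sequential two-step construction via fixed fields of Sylow $p$-subgroups in the Galois closures makes the coprimality of $[M:F]$ to $p$ transparent and handles cleanly the possibility that $L_2\otimes_F M_1$ splits into smaller-degree factors.
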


\begin{proof} We may assume that both $L_1$ and $L_2$ are contained in a fixed algebraic closure $\overline{F}$ of $F$. By Lemma \ref{existence_of_M}, there exists an extension $M/F$ of degree $n$, $M \subseteq \overline{F}$, such that $L_1M/M$ and $L_2M/M$ are both degree $p$ Galois extensions and $n$ is coprime to $p$. 
Let $\lambda \in F^{\times} \subseteq M^{\times}$. 
If $\lambda $ is a norm from $L\otimes_F F_{\nu}$ to $F_{\nu}$ for all discrete valuations $\nu$ on $F$, then $\lambda $ is a norm from $(L_1M \times L_2M) \otimes_F F_{\nu}$ to $M\otimes_F F_{\nu}$ for all $\nu$. 
By Theorem \ref{Sha_dvr_trivial_multinorm_two_degree_p_ext}, the local-global principle holds for the multinorm torus $T_{L_1M\times L_2M/M}$. Thus $\lambda$ is a norm from $L_1M\times L_2M$ to $M$. Then $\lambda^n$ is a norm from $L_1M\times L_2M$ to $F$. This implies that $\lambda^n$ is a norm from $L_1\times L_2$ to $F$. Now, clearly $\lambda^p $ is a norm from $L_1\times L_2$ to $F$ (for example, $N_{L_1/F}(\lambda) N_{L_2/F}(1)=\lambda^p$). Since $n$ is coprime to $p$ and $\lambda^n, \lambda^p$ both are norms from $L_1\times L_2$ to $F$, we conclude that $\lambda$ is a norm from $L_1\times L_2$ to $F$.
\end{proof}

\begin{remark}

We note that we do not need any assumptions on the
graph associated to regular proper models of the semi-global field $F$ in the statement of the above corollary.

\end{remark}

\begin{cor}
\label{lgp_two_deg_p_cyclic_general}

Let $\kappa_0$ be a field of cohomological dimension 
$\leq 1$. 
Let $\kappa$ be an iterated Laurent series in $r$ variables over $\kappa_0$ where $r \geq 0$.
Let $K$ be a complete discretely valued field
with residue field $\kappa.$
Let $F$ be the function field of a curve over $K$.
Let $L_1,L_2$ be two degree $p$ separable extensions of $F$.
Assume that $p \neq \text{char}(\kappa_0).$
Let $L= L_1 \times L_2.$ Then $\Sh(F, T_{L/F}) = 0.$

\end{cor}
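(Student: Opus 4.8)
The plan is to reduce the statement to Corollary \ref{any_two_deg_p} by verifying that the residue field $\kappa$ satisfies Condition A with respect to $p$. Recall that $\kappa$ is an iterated Laurent series field $\kappa_0((t_1))\cdots((t_r))$ with $\kappa_0$ algebraically closed or finite, and $p\neq\mathrm{char}(\kappa)=\mathrm{char}(\kappa_0)$. Since Condition A asks about \emph{all} finite extensions $\kappa'/\kappa$, and any finite extension of an iterated Laurent series field over $\kappa_0$ is again an iterated Laurent series field over a finite extension $\kappa_0'$ of $\kappa_0$ (which is again algebraically closed or finite), it suffices to prove the following: if $\kappa$ is such a field, then for any two degree $p$ cyclic extensions $l_1,l_2$ of $\kappa$, $T_{l_1\times l_2/\kappa}(\kappa)/R=\{1\}$.

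First I would dispose of the base case $r=0$: when $\kappa=\kappa_0$ is algebraically closed, every degree $p$ extension is split, so the torus is rational and there is nothing to prove; when $\kappa_0$ is finite, this is exactly the assertion (noted in the paper's introduction and standard, cf.\ the classical Hasse-norm considerations) that a finite field satisfies Condition A with respect to $p\neq\mathrm{char}$. Then I would induct on $r$, treating $\kappa=\kappa'((t))$ as a complete discretely valued field with residue field $\kappa'$, which by induction already satisfies Condition A with respect to $p$. Now the structure of the argument mirrors the proofs of Lemmas \ref{1unramified_1ramified_ext_tori_mod_R_trivial}, \ref{both_ramified_ext_tori_mod_R_trivial} and \ref{both_unramified_ext_tori_mod_R_trivial}: given two degree $p$ cyclic extensions $l_1,l_2$ of $\kappa$, each is either unramified or (totally, tamely) ramified over the valuation given by $t$. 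If at least one is ramified, Lemmas \ref{1unramified_1ramified_ext_tori_mod_R_trivial} and \ref{both_ramified_ext_tori_mod_R_trivial} give $T_{l_1\times l_2/\kappa}(\kappa)/R=\{1\}$ directly (these lemmas only require the residue characteristic hypothesis). If both are unramified, Lemma \ref{both_unramified_ext_tori_mod_R_trivial} reduces the claim to $T_{\bar l_1\times\bar l_2/\kappa'}(\kappa')/R=\{1\}$, where $\bar l_i$ are the residue extensions, which are degree $p$ cyclic (or trivial) extensions of $\kappa'$; this holds by the induction hypothesis.

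The main point to be careful about is the interplay between "finite extension of $\kappa$" and "iterated Laurent series field": I must confirm that a finite extension $\kappa'/\kappa$ of $\kappa_0((t_1))\cdots((t_r))$ is again of the form $\kappa_0'((s_1))\cdots((s_r))$ for some finite $\kappa_0'/\kappa_0$, so that the inductive framework applies to $\kappa'$ as well and Condition A (which quantifies over all finite extensions) is genuinely established. This is standard (a complete discretely valued field with residue field of the inductive type has all its finite extensions again complete discretely valued with residue field of the inductive type, using that $p\neq\mathrm{char}$ forces tameness at each stage), but it is the one step where one must invoke the structure theory rather than just the torus lemmas. Once this is in hand, Condition A for $\kappa$ with respect to $p$ follows, and Corollary \ref{any_two_deg_p} yields $\Sh(F,T_{L/F})=0$.
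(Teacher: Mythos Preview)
Your proposal is correct and follows essentially the same approach as the paper: reduce to Corollary \ref{any_two_deg_p} by verifying Condition A for $\kappa$, and prove Condition A by induction on $r$ using Lemmas \ref{1unramified_1ramified_ext_tori_mod_R_trivial}, \ref{both_ramified_ext_tori_mod_R_trivial}, and \ref{both_unramified_ext_tori_mod_R_trivial} exactly as in the proof of Proposition \ref{branch}. One small remark: your structure-theoretic step (that a finite extension of an iterated Laurent series field is again one) is not strictly needed, since in the inductive step a finite extension $\kappa'/\kappa$ is automatically a complete discretely valued field whose residue field is a finite extension of $\kappa_1$, and Condition A for $\kappa_1$ already quantifies over all such extensions; also, the relevant structure fact comes from Cohen's theorem in equicharacteristic rather than from tameness.
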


\begin{proof}
By Corollary \ref{any_two_deg_p}, it is enough to show that $\kappa$ 
satisfies Condition \ref{A} with respect to $p$. 
We prove this by induction on $r$.
If $\kappa =\kappa_0$ then $\kappa$ satisfies Condition \ref{A} with respect to $p$ by Remark \ref{cd_leq_one}.

Suppose the result is known for $\kappa_1$ which is the Laurent series over $\kappa_0$ in $r=k$ variables. Let $\kappa$ be the Laurent series over $\kappa_1$ in one variable. Then $\kappa$ is a complete discretely valued field with residue field $\kappa_1$. Applying Corollary \ref{residue_to_cdvf},  we conclude that $\kappa$
also satisfies the Condition \ref{A} with respect to $p$.
\end{proof}

Theorem \ref{Sha_dvr_trivial_multinorm_two_degree_p_ext} can be generalized for the case of two abelian extensions of degree $n$ where $n$ is a squarefree integer not divisible by $\text{char}(\kappa)$. \\

\begin{theorem}
\label{lgp_two_extenions_of_same_degree_squarefree_degree}

Let $K$ be a complete discretely valued field
with residue field $\kappa.$
Let $F$ be the function field of a curve over $K$.
Let $L_1$ and $L_2$ be two abelian extensions of $F$ of degree $n$,
where $n$ is a squarefree integer not divisible by $\text{char}(\kappa)$. Let $L= L_1 \times L_2.$
If $\kappa$ satisfies Condition \ref{A} with respect to all $p$ dividing $n$, then the local-global principle holds for the multinorm torus $T_{L/F}$ with respect to discrete valuations.

\end{theorem}

\begin{proof}
Let $n = p_1p_2 \cdots p_m$ where $p_i$ are distinct primes.
Let $G_1$ (resp. $G_2$) denote the Galois group of $L_1$(resp. $L_2$).
Let $H_i$ (resp. $H'_i$), $1 \leq i \leq m$, denote the Sylow $p_i$ subgroup of $G_1$(resp. $G_2$).
Let $M_i = L_1^{H_i}$ and $N_i = L_2^{H'_i}$ for $i$, $1\leq i \leq m$.\\ 

We prove the result by induction on $m$, the number of distinct prime factors of $n$.
For $m=1$, the result is true by Theorem \ref{Sha_dvr_trivial_multinorm_two_degree_p_ext}.
Suppose we know the result for $m=r$, $r \geq 1$.
We now argue that the result holds for $m=r+1$ case.\\ 

Let $\lambda \in F^{\times}$ such that $\lambda \in N_{(L_1 \otimes_F F_{\nu}) \times(L_2 \otimes_F F_{\nu})/F_{\nu} }((L_1 \otimes_F F_{\nu}) \times(L_2 \otimes_F F_{\nu}))$ for all $\nu \in \Omega_F$.
Then $\lambda \in N_{(M_i \otimes_F F_{\nu}) \times(N_i \otimes_F F_{\nu})/F_{\nu} }((M_i \otimes_F F_{\nu}) \times(N_i \otimes_F F_{\nu}))$ for all $\nu \in \Omega_F$ and for all $i$, $1 \leq i \leq r+1$. By induction hypothesis, this implies that  $\lambda \in N_{M_i \times N_i/F}(M_i \times N_i)$ for  all $i$, $1 \leq i \leq r+1$. Thus $\lambda ^{p_i} \in N_{L_1 \times L_2 /F}(L_1 \times L_2)$ for all $i$, $1 \leq i \leq r+1$.
In particular, $\lambda^{p_1}, \lambda^{p_{r+1}} \in N_{L_1 \times L_2 /F}(L_1 \times L_2).$
Since $p_1$ and $p_{r+1}$ are distinct primes, we conclude that $\lambda \in N_{L_1 \times L_2 /F}(L_1 \times L_2)$.
\end{proof}

\begin{remark} The above result no longer holds if we consider $n$ which is not squarefree. For example, when $n$ is a square, we have counterexamples (see \cite[Corollary 7.12]{mishra2021local}).\end{remark}

In other direction, we have following result$\colon$

\begin{theorem}
\label{lgp_multinorm_p_general_case}

Let $K$ be a complete discretely valued field
with residue field $\kappa.$
Let $F$ be the function field of a curve over $K$.
Let $L_1$ be a degree $p$ separable extension of $F$.
Let $L_2$ be a Galois extension of $F$ of degree $n$ with Galois group $G$.
Assume that $p^2$ does not divide $n$.
If $p$ divides $n$ then also assume that $G$ has an index $p$ subgroup $H$.
Assume that  $\text{char}(\kappa)$ does not divide $pn$.
Let $L= L_1 \times L_2.$
If $\kappa$ satisfies Condition \ref{A} with respect to $p$, then the local-global principle holds for $T_{L/F}$ with respect to discrete valuations.

\end{theorem}

\begin{proof}

If $p$ does not divide $n$ then the local-global principle holds by Observation \ref{coprime_degrees}.
So we can assume that $n = pr$ for some positive integer $r$ with
$r$ coprime to $p$.\\

Let $L'_{2}$ be the fixed field of $H$ in $L_2$.
Assume that $\alpha \in F^{\times}$ is a norm from
$L \otimes_F F_{\nu}$ to $F_{\nu}$ for all $\nu \in \Omega_F$.
Then $\alpha$ is also a norm from
$(L_1 \times L'_{2}) \otimes_F F_{\nu} $ to $F_{\nu}$
for all $\nu \in \Omega_F$.
By Corollary \ref{any_two_deg_p},
$\alpha$ is a norm from $L_1 \times L'_2$ to $F$.
Then $\alpha^{r}$ is a norm from $L$ to $F$.\\

Now let $E$ be an order $p$ subgroup of $G$.
Let $L^{\prime \prime}_{2}$ be the fixed field of $E$ in $L_2$.
Since $\alpha = N_{F/F}(\alpha) N_{L^{\prime \prime}_{2}/F}(1)$, $\alpha$ is a norm from
$F \times L^{\prime \prime}_{2}$ to $F$.
Hence $\alpha^{p}$ is a norm from $L$ to $F$.
Since $r$ and $p$ are coprime and $\alpha^{r}$ is a norm
from $L$ to $F$, we conclude that $\alpha$ is a norm from $L$ to $F$.

\end{proof}

For $p=2$, the following stronger statement holds$\colon$

\begin{theorem}
\label{lgp_two_quadratic}

Let $K$ be a complete discretely  valued field with residue field $\kappa$.
Let $F$ be function field of a curve over $K$.
Let $L_1$ and $L_2$ be two quadratic extensions of $F$.
Assume that $2 \neq \text{char}(\kappa)$.
Let $\displaystyle L = L_1 \times L_2.$
Then the local-global principle holds for $T_{L/F}$ with respect to discrete valuations.

\end{theorem}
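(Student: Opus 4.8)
The plan is to bypass the $R$-equivalence analysis and instead exploit a special presentation of the multinorm torus attached to two quadratic extensions, reducing everything to the local-global principle for the Brauer group of a semi-global field. Since $\mathrm{char}(\kappa)\neq 2$ forces $\mathrm{char}(F)\neq 2$, write $L_i=F(\sqrt{d_i})$. If $L_1\cong L_2=l$, then by the isomorphism of Proposition~\ref{only_nonisomorphic_fields_matter_multinorm_mod_R} (with $r_1=2$) we have $T_{L/F}\cong R^1_{l/F}\mathbb{G}_m\times R_{l/F}\mathbb{G}_m$ as algebraic groups; as $\Sh(F,-)$ is additive over products and $\Sh(F,R_{l/F}\mathbb{G}_m)=0$ by Hilbert~90, this case reduces to showing $\Sh(F,R^1_{l/F}\mathbb{G}_m)=0$, which follows by the same method as below. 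So assume $L_1\not\cong L_2$; then $M:=L_1L_2$ is a biquadratic field, $\mathrm{Gal}(M/F)\cong(\mathbf{Z}/2)^2$, and $L_3:=F(\sqrt{d_1d_2})$ is its third quadratic subfield. The crucial point is the short exact sequence of $F$-tori
$$1\longrightarrow R^1_{L_3/F}\mathbb{G}_m\longrightarrow R_{M/F}\mathbb{G}_m\overset{\phi}{\longrightarrow}T_{L/F}\longrightarrow 1,\qquad \phi(z)=\bigl(N_{M/L_1}(z),\,N_{M/L_2}(z)^{-1}\bigr).$$
Here $\phi$ lands in $T_{L/F}$ by transitivity of norms, it is surjective for dimension reasons, and one checks directly that $\ker\phi=\{z\in M^{\times}:N_{M/L_1}(z)=N_{M/L_2}(z)=1\}$, which forces $z\in L_3^{\times}$ with $N_{L_3/F}(z)=1$, so $\ker\phi\cong R^1_{L_3/F}\mathbb{G}_m$. (Dually, this is the exact sequence $0\to X^{*}(T_{L/F})\to\mathbf{Z}[\mathrm{Gal}(M/F)]\to\mathbf{Z}_{\chi}\to 0$ of character lattices, with $\chi$ the quadratic character of $L_3$.)

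Taking Galois cohomology over $E=F$ and over $E=F_{v}$ for $v\in\Omega_{F}$, and using $H^{1}(E,R_{M/F}\mathbb{G}_m)=H^{1}(M\otimes_{F}E,\mathbb{G}_m)=0$ (quasi-triviality), the connecting maps give injections $\partial_{E}\colon H^{1}(E,T_{L/F})\hookrightarrow H^{2}(E,R^1_{L_3/F}\mathbb{G}_m)$ compatible with restriction. Hence $\partial_{F}$ carries $\Sh(F,T_{L/F})$ into $\Sh^{2}(F,R^1_{L_3/F}\mathbb{G}_m):=\ker\bigl(H^{2}(F,R^1_{L_3/F}\mathbb{G}_m)\to\prod_{v}H^{2}(F_{v},R^1_{L_3/F}\mathbb{G}_m)\bigr)$, and it is enough to make this group vanish. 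From $1\to R^1_{L_3/F}\mathbb{G}_m\to R_{L_3/F}\mathbb{G}_m\to\mathbb{G}_m\to 1$ and Hilbert~90 one obtains, compatibly in $E$, $H^{2}(E,R^1_{L_3/F}\mathbb{G}_m)=\ker\bigl(\mathrm{Br}(L_3\otimes_{F}E)\xrightarrow{\mathrm{cor}}\mathrm{Br}(E)\bigr)$. Thus $\Sh^{2}(F,R^1_{L_3/F}\mathbb{G}_m)$ embeds into $\ker\bigl(\mathrm{Br}(L_3)\to\prod_{v\in\Omega_{F}}\mathrm{Br}(L_3\otimes_{F}F_{v})\bigr)$, which, since every discrete valuation of the finite extension $L_3/F$ restricts to one of $F$, equals $\Sh(L_3,\mathbb{G}_m)$: the obstruction to the local-global principle for $\mathrm{Br}(L_3)$ with respect to discrete valuations.

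To finish, note that $H^{1}(F,T_{L/F})\cong F^{\times}/\bigl(N_{L_1/F}(L_1^{\times})N_{L_2/F}(L_2^{\times})\bigr)$ is $2$-torsion, since $\lambda^{2}=N_{L_1/F}(\lambda)$; hence $\Sh(F,T_{L/F})$ and its image under $\partial_{F}$ are $2$-primary, so only the $2$-primary part of $\Sh(L_3,\mathbb{G}_m)$ is relevant. Since $L_3$ is again a semi-global field and $\mathrm{char}(\kappa)\neq 2$, the local-global principle for the Brauer group of semi-global fields with respect to discrete valuations yields $\Sh(L_3,\mathbb{G}_m)\{2\}=0$ (and likewise $\Sh(F,\mathbb{G}_m)\{2\}=0$, which via $1\to\mathbb{G}_m\to R_{l/F}\mathbb{G}_m\to R^1_{l/F}\mathbb{G}_m\to 1$ settles the case $L_1\cong L_2$). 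Therefore $\Sh(F,T_{L/F})=0$, which is the asserted local-global principle. The main obstacle I foresee is verifying the exact sequence of the first paragraph (the kernel and surjectivity of $\phi$) and quoting the Brauer-group local-global principle in precisely the form needed; note that no hypothesis on $\kappa$ beyond $\mathrm{char}(\kappa)\neq 2$, and no hypothesis on the reduction graph of $F$, enters anywhere.
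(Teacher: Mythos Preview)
Your argument is correct but takes a genuinely different route from the paper's. The paper simply observes that $T_{L/F}$ is birational to the smooth affine quadric $X_1^2 - aX_2^2 - X_3^2 + bX_4^2 = 0$ (via $N_{L_1/F}(x_1)=N_{L_2/F}(x_2^{-1})$), which has the rational point $(1,0,1,0)$ and is therefore a rational variety; hence $\Sh_X(F,T_{L/F})=0$ for every model by \cite[Corollary~6.5]{harbater2015local}, and Theorem~\ref{union_of_Sha_X_equals_Sha_dvr_multinorm} finishes the proof.

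Your approach instead presents $T_{L/F}$ as a quotient of the quasi-trivial torus $R_{M/F}\mathbb{G}_m$ by $R^1_{L_3/F}\mathbb{G}_m$ and reduces everything to the $2$-primary Brauer local-global principle over the semi-global field $L_3$. Both concerns you raise are fine: the exact sequence is correct (your kernel computation is right, and surjectivity holds since $\phi$ is a torus homomorphism with $\dim\ker\phi + \dim T_{L/F} = 4 = \dim R_{M/F}\mathbb{G}_m$); and since the connecting image lands in $\ker(\mathrm{Br}(L_3)\to\mathrm{Br}(M))$ with $M/L_3$ quadratic, the Brauer classes involved are all quaternion classes, so the required injectivity reduces to the local-global principle for conics over semi-global fields --- again a consequence of \cite{harbater2015local} for the rational group $PGL_2$ together with a bridging argument of the same flavour as Theorem~\ref{union_of_Sha_X_equals_Sha_dvr_multinorm}. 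The paper's proof is shorter and more self-contained; yours is more structural, makes the link to the Brauer group explicit, and would adapt to related tori that are not obviously rational.
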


\begin{proof}

Since norm is multiplicative, the multinorm torus associated to two quadratic extensions $F(\sqrt{a})/F$ and $F(\sqrt{b})/F$  is given by the equation$:$ $X^2_1 -a X^2_2 = X^2_3 -bX^2_4$ or equivalently, $X^2_1 -a X^2_2 - X^2_3 +bX^2_4 = 0$. Since  $X^2_1 -a X^2_2 - X^2_3 +bX^2_4$ is a smooth quadric with rational points, e.g., $(1,0,1,0)$, the multinorm torus is rational.
Hence by \cite[Corollary 6.5]{harbater2015local}, $\Sh_{X}(F,T_{L/F})=0$. Now the result follows from Theorem \ref{union_of_Sha_X_equals_Sha_dvr_multinorm}.

\end{proof}

Before our next result, we recall a basic fact from theory of finite groups.

\begin{lemma}
\label{2m}
Let $m$ be an odd number and let $G$ be a finite group of order $2m$. Then there exists a normal subgroup of $G$ of index $2$.
\end{lemma}
\begin{proof}
Since $|G|=2m$, by Cauchy's theorem, there exists an element in $G$, say $a$, of order $2$. 
Let $\mathsf{Sym}(G)$ denote the group of permutations of the underlying set of $G$. 
Let $\phi: G\rightarrow \mathsf{Sym}(G)$ be the group homomorphism defined by sending $g\in G$ to $\phi_g$, where $\phi_g(x)=gx$ for all $x\in G$. We note that the permutations $\phi_g$ do not fix any element $x$ in $G$. Also, since $a$ is of order $2$ in $G$, $\phi_a$ is of order $2$ in $\mathsf{Sym}(G)$. Thus $\phi_a$ is a disjoint product of $m$ many transpositions. 

Let $\mathsf{sgn}: \mathsf{Sym}(G) \rightarrow \{\pm1\}$ be the signature homomorphism. Consider the group homomorphism $\mathsf{sgn} \circ \phi : G \rightarrow \{\pm 1\}$. Then $\mathsf{sgn} \circ \phi $ is surjective as $(\mathsf{sgn} \circ \phi)(a) =(-1)^m=-1.$ Thus $\mathsf{ker}(\mathsf{sgn} \circ \phi )$ is a normal subgroup of $G$ of index $2$.
\end{proof}

\begin{theorem}

 Let $K$ be a complete discretely  valued field with residue field $\kappa$.
Let $F$ be function field of a curve over $K$. Let $L_1$ be a quadratic extension of $F$.
Let $L_2$ be a Galois extension of $F$ of degree $n$ with Galois group $G$.
Assume that $4$ does not divide $n$.
Assume that $\text{char}(\kappa)$ does not divide $2n$.
Let $\displaystyle L = L_1 \times L_2.$
Then the local-global principle holds for $T_{L/F}$ with respect to discrete valuations.
\end{theorem}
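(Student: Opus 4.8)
The plan is to run the argument of Theorem~\ref{lgp_multinorm_p_general_case} with $p=2$, using in place of its main input the preceding theorem (local-global for the multinorm torus of two quadratic extensions, which needs only $\mathrm{char}(\kappa)\neq 2$). First I would reduce: if $2\nmid n$ then $\gcd([L_1:F],[L_2:F])=1$, so for any $\lambda\in F^\times$, choosing integers with $2a+nb=1$ gives $\lambda=N_{L_1/F}(\lambda^a)\,N_{L_2/F}(\lambda^b)$; thus $F^\times$ is already the group of products of norms and $H^1(F,T_{L/F})\cong F^\times/\bigl(\prod_i N_{L_i/F}(L_i^\times)\bigr)=0$, so the local-global principle holds with nothing to check. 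Hence we may assume $2\mid n$, and since $4\nmid n$ we write $n=2r$ with $r$ odd.

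The only genuinely new ingredient is the production of a quadratic subextension of $L_2/F$. I would observe that a finite group $G$ of order $2r$ with $r$ odd always has an index-$2$ subgroup: by Cauchy pick an involution $\tau\in G$; in the regular representation $G\hookrightarrow S_{2r}$ it acts without fixed points, hence as a product of $r$ transpositions, hence as an odd permutation, so $\mathrm{sgn}\colon G\to\{\pm1\}$ is onto and $H:=\ker(\mathrm{sgn})$ has index $2$. Put $L_2':=L_2^{H}$, a degree-$2$ (hence quadratic, as $\mathrm{char}(\kappa)\neq 2$) extension of $F$ contained in $L_2$, with $[L_2:L_2']=r$.

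Now let $\alpha\in F^\times$ be a product of norms from $L\otimes_F F_\nu$ to $F_\nu$ for every $\nu\in\Omega_F$. Since $N_{L_2\otimes_F F_\nu/F_\nu}$ factors through $N_{L_2'\otimes_F F_\nu/F_\nu}$, $\alpha$ is also a product of norms from $(L_1\times L_2')\otimes_F F_\nu$ to $F_\nu$ for all $\nu$; applying the two-quadratic local-global principle gives $\alpha=N_{L_1/F}(\beta_1)\,N_{L_2'/F}(\beta_2)$ with $\beta_1\in L_1^\times$ and $\beta_2\in L_2'^\times$. As $\beta_2\in L_2'$ and $[L_2:L_2']=r$, we have $N_{L_2/L_2'}(\beta_2)=\beta_2^{\,r}$, so $N_{L_2'/F}(\beta_2)^r=N_{L_2/F}(\beta_2)$ and therefore $\alpha^r=N_{L_1/F}(\beta_1^{\,r})\,N_{L_2/F}(\beta_2)$ is a product of norms from $L$ to $F$. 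Moreover $\alpha^2=N_{L_1/F}(\alpha)$ is a norm from $L_1/F$, hence also a product of norms from $L$. Since $\gcd(2,r)=1$ and the products of norms from $L$ form a subgroup of $F^\times$, $\alpha$ is a product of norms from $L$ to $F$, which is exactly the asserted local-global statement.

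I do not anticipate a serious obstacle: the paper itself advertises this as an ``immediate'' consequence of the two-quadratic case. The only points requiring care are the elementary group theory (an order-$2r$ group with $r$ odd has an index-$2$ subgroup) and the bookkeeping of norm maps through the intermediate field $L_2'$ — including after base change to the completions $F_\nu$, where the \'etale algebras split into products of local fields but the relevant norm factorization persists componentwise.
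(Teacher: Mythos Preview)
Your proposal is correct and follows exactly the route the paper intends: it says the result follows ``immediately, similar to Theorem~\ref{lgp_multinorm_p_general_case}'', i.e.\ by running that argument with $p=2$ and the two-quadratic local-global theorem as the input, which is precisely what you do. Your explicit Cayley/sign argument that a group of order $2r$ with $r$ odd always has an index-$2$ subgroup supplies the one point the paper leaves tacit (Theorem~\ref{lgp_multinorm_p_general_case} \emph{assumes} such a subgroup, whereas the present statement does not), and your direct observation $\alpha^{2}=N_{L_1/F}(\alpha)$ is a mild streamlining of the paper's detour through an order-$p$ subgroup of $G$.
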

 \begin{proof} 
 By Lemma \ref{2m}, $G$ has an index $2$ subgroup. Arguing as in case of proof of Theorem \ref{lgp_multinorm_p_general_case} and using Theorem \ref{lgp_two_quadratic}, one gets the result.
 \end{proof}

\section{Local-Global Principles for Product of norms from more than two extensions}
\label{LGP_2}

In this section, we prove a local global principle for some classes of multinorm tori over semiglobal fields such that the graph associated to their regular proper model is a tree.
Let $K$ be a complete discretely valued field
with residue field $\kappa$ algebraically
closed. Let $F$ be the function field of a curve over $K$.
Let $T_{L/F}$ be the multinorm torus associated to
cyclic extensions $L_i/F$, $1 \leq i \leq m$
with degrees $[L_i \colon F]=p$ where $p$ is a prime number.
We study $T_{L/F}(F_{P,\eta})/R$ for
branches $(P,\eta)$.

\begin{notation}

For a discrete valuation $\nu$ on $F$,
let $F_{\nu,h} \subseteq F_{\nu}$ be the henselization of $F$ at $\nu$. 
\end{notation}

We start with some results concerning
tori defined over semi-global fields.\\

\begin{lemma}
\label{surjectivity_for_Rtrivial}
Let $T$ be a torus defined over
a semi-global field $F$. 
Let $\XX$ be a regular proper model of $F$
such that the reduced special fibre $X$ is a union of regular curves with normal crossings. 
Let $V \subsetneq X$ be a non-empty open subset of $X$ contained in an irreducible component $X_{\eta}$ of $X$. 
Suppose there exists a closed point $P$ of $X_{\eta}$ such that $V\cup \{P\} \subsetneq X_{\eta}$ and does not intersect any other irreducible component of $X$. 
Then the natural map $RT(F_V) \times  RT(F_P) \rightarrow RT(F_{P,\eta})$ is surjective.
\end{lemma}

\begin{proof}
Let $1 \rightarrow S \rightarrow Q \rightarrow T \rightarrow 1$ be a flasque resolution of the torus $T$ over $F$, where $Q$ is a quasitrivial torus defined over $F$ and $S$ is a flasque torus defined over $F$ (see \cite{colliot1977r} and \cite{colliot1987principal}). 
Let $V'=V \cup \{P\}.$
Since $Q$ is quasi-trivial, $H^1(F_{V'},Q)=0$. We have $F_{V'} \subset F_{V}, F_{P} \subset F_{P,\eta}$ and we get that $F_{V},F_{P}, F_{P,\eta}$ form an inverse system of fields over $F_{V'}$, as  in \cite{harbater2015local}.
Hence by \cite[Theorem 2.4]{harbater2015local} and \cite[Proposition 3.9]{harbater2015refinements}, it follows that 
$Q(F_V)\times Q(F_P) \rightarrow Q(F_{P,\eta})$ is surjective. 
Since the image of  the map $Q(L) \rightarrow T(L)$ is $RT(L)$ for every field extension $L/F$ (see \cite[Theorem 2]{colliot1977r} and its proof), the map $RT(F_V) \times RT(F_P) \rightarrow RT(F_{P,\eta})$ is surjective.

\end{proof}

\begin{lemma}
\label{F_U_to_F_eta_mod_R}
Let $F$, $T$, $\XX$, $X$ as in Lemma \ref{surjectivity_for_Rtrivial}. Suppose that$\colon$

\begin{itemize}
\item $a)$ the natural map $T(F_{\eta,h}) \to T(F_\eta)/R$ is
 surjective for all codimension zero points $\eta$ of $X$, and,

\item $b)$ the natural map $T(F_P) \rightarrow T(F_{P,\eta})/R$
is surjective for all codimension zero point $\eta$ of $X$ and for all $P$ such that $P$ is contained in $\eta$.
\end{itemize}
Let $U$ be a non-empty proper open subset of $X_{\eta}$ such that $U$ does not meet another irreducible components of $X$.
Then the map $T(F_U) \rightarrow T(F_{\eta})/R$
is surjective.

\end{lemma}

\begin{proof}

 Let $U$ be a non-empty proper open subset of $X_{\eta}$ such that $U$ does not meet another irreducible components of $X$ and  $\mu \in T(F_{\eta}).$
Then by the assumption $a)$, there exists $\mu_h \in T(F_{\eta,h})$
which maps to $\mu$ in $T(F_\eta)/R$.
Thus, replacing $\mu$ by $\mu_h$, we may assume
that $\mu \in T(F_{\eta,h})$.
Since, by \cite[Lemma 3.2.1.]{harbater2014local},
the field $F_{\eta,h}$ is the filtered direct limit of the fields $F_V$,
where $V$ ranges over the nonempty open proper subsets of $X_\eta$, there exists a non-empty proper open subset $V$ of $X_{\eta}$ such that $V$ does not meet another irreducible components of $X$ and $\mu \in T(F_V)$. By taking intersection with $U$, if needed,
we may assume that $V \subseteq U.$

If $V = U$, there
is nothing to prove.
Suppose that $V\neq U$.
Let $P \in U \setminus V$.
By assumption, the map
$T(F_P) \rightarrow T(F_{P,\eta})/R$
is surjective.
Let $\mu_P$ be an element in
$T(F_P)$ mapping to the class of
$\mu$ in $T(F_{P,\eta})/R$.
Then $\mu = \mu_P \alpha_{P,\eta}$
for some $\alpha_{P,\eta} \in RT(F_{P,\eta})$. By Lemma \ref{surjectivity_for_Rtrivial}, there exists $\alpha_V \in RT(F_V)$ and
$\alpha_P \in RT(F_P)$ with
$\alpha_{P,\eta} = \alpha_P \alpha_V$
in $RT(F_{P,\eta}).$
Then $\mu = \mu_P \alpha_P \alpha_V$.
Hence $\mu \alpha^{-1}_V = \mu_P \alpha_P
\in T(F_V) \cap T(F_P)$.
By \cite[Lemma 2.12.]{harbater2015refinements}
and \cite[Proposition 3.9.]{harbater2015refinements},
$\mu \alpha^{-1}_V \in T(F_{V \cup \{P\}})$.
Also, $\mu \alpha^{-1}_V$ maps to
the equivalence class of $\mu$
in $T(F_{\eta})/R$ since
$\alpha_V \in RT(F_V) \subseteq RT(F_{\eta}).$
Since $U \setminus V$ is a finite set,
doing this process finitely many times, we get the
result.

\end{proof}

\begin{lemma}
\label{branch_to_eta_implies_branch_to_U}
Let $F$, $T$, $\XX$, $X$ as in Lemma \ref{surjectivity_for_Rtrivial}. Suppose that $\colon$ \\
\begin{itemize}
\item the natural map $T(F_{\eta,h}) \to T(F_\eta)/R$
is surjective for all codimension zero points $\eta$ of $X$, and,

\item for every closed point $P$ of $X_\eta$,
the natural maps $T(F_{\eta}) \rightarrow T(F_{P,\eta})/R$ and
$T(F_P) \rightarrow T(F_{P,\eta})/R$ are surjective.
\end{itemize}

Then for any proper open subset $U$ of $X_\eta$,
the natural map $$T(F_{U}) \rightarrow
T(F_{P,\eta})/R$$
is surjective.

\end{lemma}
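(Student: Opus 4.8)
The plan is to combine the two preceding lemmas. Lemma \ref{F_U_to_F_eta_mod_R} already shows that under hypotheses which include the surjectivity of $T(F_{\eta,h}) \to T(F_\eta)/R$ and the surjectivity of $T(F_P) \to T(F_{P,\eta})/R$ at every branch, the map $T(F_U) \to T(F_\eta)/R$ is surjective for every proper open $U \subseteq X_\eta$. The current statement adds to this the extra hypothesis that $T(F_\eta) \to T(F_{P,\eta})/R$ is surjective, and asks for the stronger conclusion that $T(F_U) \to T(F_{P,\eta})/R$ is surjective. So the first step is to invoke Lemma \ref{F_U_to_F_eta_mod_R} (note its hypothesis (b) is exactly our second bullet restricted to branches at $P \in X_\eta$, and hypothesis (a) is our first bullet) to conclude that $T(F_U) \to T(F_\eta)/R$ is surjective; strictly one should remark that Lemma \ref{F_U_to_F_eta_mod_R} is being applied fiber-by-fiber, i.e. only the component $X_\eta$ is relevant, so the hypotheses only need to hold for branches lying on $X_\eta$, which is what we are given.

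Next I would chase the diagram. Fix a proper open $U \subseteq X_\eta$ and a closed point $P \in X_\eta$. Given a class $\bar\gamma \in T(F_{P,\eta})/R$, use the hypothesis that $T(F_\eta) \to T(F_{P,\eta})/R$ is surjective to lift it to some $\mu \in T(F_\eta)$. By the previous paragraph, $T(F_U) \to T(F_\eta)/R$ is surjective, so there is $\mu_U \in T(F_U)$ whose image in $T(F_\eta)/R$ equals that of $\mu$. The point is then that the composite $T(F_U) \to T(F_\eta) \to T(F_{P,\eta})/R$ only depends on the class in $T(F_\eta)/R$ — because $RT(F_\eta)$ maps into $RT(F_{P,\eta})$ under the inclusion $F_\eta \hookrightarrow F_{P,\eta}$ (a general fact about $R$-equivalence and field extensions), so $\mu$ and $\mu_U$ have the same image in $T(F_{P,\eta})/R$, namely $\bar\gamma$. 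Hence $\mu_U$ is the desired preimage and the map $T(F_U) \to T(F_{P,\eta})/R$ is surjective.

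The main subtlety — more a bookkeeping point than a genuine obstacle — is making sure the hypotheses of Lemma \ref{F_U_to_F_eta_mod_R} are genuinely met. That lemma is stated with hypotheses quantified over all codimension-zero points and all branches of $X$, whereas here we only assume things about the single component $X_\eta$ and its branches. The resolution is that the proof of Lemma \ref{F_U_to_F_eta_mod_R} is entirely local to $X_\eta$: it manipulates only $F_V$ for $V \subseteq X_\eta$, $F_\eta$, $F_{\eta,h}$, and $F_{P,\eta}$ for $P \in X_\eta$, so it applies verbatim with the restricted hypotheses. I would state this explicitly (or, alternatively, one could note that the two lemmas could have been merged). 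Apart from that, the only fact used beyond the cited lemma is the functoriality $RT(F_\eta) \subseteq RT(F_{P,\eta})$ under field extension, which is standard and needs only a one-line justification.
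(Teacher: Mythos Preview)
Your proposal is correct and follows essentially the same approach as the paper: invoke Lemma~\ref{F_U_to_F_eta_mod_R} to get surjectivity of $T(F_U)\to T(F_\eta)/R$, then compose with the assumed surjection $T(F_\eta)\to T(F_{P,\eta})/R$. The paper compresses your diagram chase into the single observation that $T(F_U)\to T(F_{P,\eta})/R$ factors through $T(F_\eta)/R$; your remark about the hypotheses of Lemma~\ref{F_U_to_F_eta_mod_R} being needed only along the fixed component $X_\eta$ is a valid clarification that the paper leaves implicit.
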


\begin{proof}

The proof follows immediately from
Lemma \ref{F_U_to_F_eta_mod_R} since
the natural map $T(F_{U}) \rightarrow
T(F_{P,\eta})/R$ factors through
$T(F_{\eta})/R$.

\end{proof}

\begin{prop}
\label{eta_P_implies_ShaX_zero_multinorm}
Let $F$, $T$, $\XX$, $X$ as in Lemma \ref{surjectivity_for_Rtrivial}. Assume that$\colon$\\

$\bullet$ the graph associated to $\XX$
is a tree,\\

$\bullet$ the natural map $T(F_{\eta,h}) \to T(F_\eta)/R$ is surjective for all codimension zero points $\eta$ of $X$, and,\\

$\bullet$ the natural map $T(F_{\eta})
\rightarrow T(F_{P,\eta})/R$ is surjective 
for all possible codimension zero points $\eta$ of $X$ and branches $(P,\eta)$
and, all choices of $\PP$, and\\

$\bullet$ the natural map $T(F_{P})
\rightarrow T(F_{P,\eta})/R$ is surjective
for all possible $P$ and branches $(P,\eta)$
and all choices of $\PP$. \\

Then $\Sh_X(F,T)=0$.

\end{prop}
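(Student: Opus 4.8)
The plan is to combine the bipartite-tree factorization result (Corollary \ref{surjective_from_P_and_U_implies_ShaP_trivial}) with the reduction lemmas just proved (Lemma \ref{F_U_to_F_eta_mod_R} and Lemma \ref{branch_to_eta_implies_branch_to_U}). Recall that by \cite[Corollary 5.9.]{harbater2015local} we have $\Sh_X(F,T) = \bigcup_{\PP} \Sh_{\PP}(F,T)$, where $\PP$ ranges over finite sets of closed points of $X$ containing all the nodal points. So it suffices to show $\Sh_{\PP}(F,T) = 0$ for every such $\PP$.

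First I would fix a choice of $\PP$ and verify the hypotheses of Corollary \ref{surjective_from_P_and_U_implies_ShaP_trivial} for this $\PP$. The graph being a tree is assumed directly. The surjectivity of $T(F_P) \to T(F_{P,U})/R$ for all branches $(P,U)$ with $P \in \PP$ is exactly the fourth bullet hypothesis. The remaining ingredient is the surjectivity of $T(F_U) \to T(F_{P,U})/R$ for every component $U$ of $X \setminus \PP$ and every branch $(P,U)$; each such $U$ is a proper open subset of $X_\eta$ for the codimension-zero point $\eta$ it lies over. This is precisely the conclusion of Lemma \ref{branch_to_eta_implies_branch_to_U}, whose hypotheses are: surjectivity of $T(F_{\eta,h}) \to T(F_\eta)/R$ (the second bullet here), surjectivity of $T(F_\eta) \to T(F_{P,\eta})/R$ for each closed point $P$ of $X_\eta$ (the third bullet here), and surjectivity of $T(F_P) \to T(F_{P,\eta})/R$ for each such $P$ (the fourth bullet here). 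Thus all three hypotheses of Lemma \ref{branch_to_eta_implies_branch_to_U} hold for every $\eta$, giving the needed surjectivity $T(F_U) \to T(F_{P,U})/R$.

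Having checked all three bullets of Corollary \ref{surjective_from_P_and_U_implies_ShaP_trivial}, we conclude $\Sh_{\PP}(F,T) = 0$. Since $\PP$ was arbitrary among finite sets of closed points containing the nodal points, taking the union over all such $\PP$ and invoking \cite[Corollary 5.9.]{harbater2015local} yields $\Sh_X(F,T) = 0$.

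I do not expect a serious obstacle here: the statement is essentially a bookkeeping assembly of Lemma \ref{F_U_to_F_eta_mod_R}, Lemma \ref{branch_to_eta_implies_branch_to_U}, and Corollary \ref{surjective_from_P_and_U_implies_ShaP_trivial}. The only point requiring mild care is that the hypotheses of this proposition are phrased "for all choices of $\PP$," so that when we fix a particular $\PP$ and extract a component $U$ lying over $\eta$, the surjectivity hypotheses at the branches $(P,\eta)$ are available exactly as Lemma \ref{branch_to_eta_implies_branch_to_U} demands, regardless of which $\PP$ produced $U$; this is why the hypotheses are stated uniformly in $\PP$ rather than for one fixed model data.
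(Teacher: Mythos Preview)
Your proposal is correct and follows essentially the same route as the paper's proof: reduce via \cite[Corollary 5.9.]{harbater2015local} to showing $\Sh_{\PP}(F,T)=0$ for every $\PP$, then feed the hypotheses through Lemma \ref{branch_to_eta_implies_branch_to_U} to obtain the $T(F_U)\to T(F_{P,U})/R$ surjectivity needed for Corollary \ref{surjective_from_P_and_U_implies_ShaP_trivial}. Your write-up is in fact more explicit than the paper's two-line proof about how each hypothesis is used, which is fine.
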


\begin{proof}

By \ref{HHK_theorem1}, we just need to show that
for every choice of $\PP$,
$\Sh_{\PP}(F,T)$ is trivial.
This immediately follows from
Lemma \ref{branch_to_eta_implies_branch_to_U} and
Corollary \ref{surjective_from_P_and_U_implies_ShaP_trivial}.

\end{proof}

\begin{lemma}
\label{surjective_for_general_extensions}

Let $F$ be a field and $L_i/F$, $1 \leq i \leq m$, be finite separable extensions. 
Let $[L_i \colon F] = n_i$.
Let $\displaystyle L=\prod^{m}_{i=1} L_i$ and let $T_{L/F}$ be the associated
multinorm torus.
Let
$\displaystyle n\colonequals
 \mathsf{lcm} (n_i \mid 1 \leq i \leq m ).$
Let $M/F$ be a field extension.
Assume that:\\

$\bullet$ for all
$(\mu_1, \dots, \mu_m) \in T_{L/F}(M)$,
there exists $(\mu'_1, \dots, \mu'_m) \in T_{L/F}(F)$
such that $N_{L_i\otimes_F M/M}(\mu_i\mu^{'-1}_i)
\in M^{\times n}$ for all $i$, $1 \leq i \leq m$.\\

$\bullet$ The natural maps
$$ T_{L_i/F}(F) \rightarrow T_{L_i/F}(M)/R $$
are surjective for $1 \leq i \leq m$. \\

Then the natural  map
$T_{L/F}(F) \rightarrow T_{L/F}(M)/R$
is surjective.

\end{lemma}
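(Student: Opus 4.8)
The plan is to work in the abelian group $Q\colonequals T_{L/F}(M)/R$, and to reduce an arbitrary class step by step: first to one represented by a tuple all of whose coordinate norms are $n$-th powers in $M^\times$, then to one represented by a tuple in $\prod_i T_{L_i/F}(M)$, and finally to apply the second hypothesis coordinate-wise. Throughout I would use that the image of $T_{L/F}(F)$ in $Q$ is a subgroup, so it suffices to handle a class after multiplying it by the class of any element of $T_{L/F}(F)$.

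First I would take $(\mu_1,\dots,\mu_m)\in T_{L/F}(M)$ and apply the first bullet to obtain $(\mu_1',\dots,\mu_m')\in T_{L/F}(F)$ with $N_{L_i\otimes_F M/M}(\mu_i\mu_i'^{-1})\in M^{\times n}$; replacing the class by that of $(\mu_1\mu_1'^{-1},\dots,\mu_m\mu_m'^{-1})$, I may assume $a_i\colonequals N_{L_i\otimes_F M/M}(\mu_i)\in M^{\times n}$ for every $i$, with the constraint $\prod_i a_i=1$. The one genuinely clever bookkeeping point comes here: rather than extracting an $n$-th root of each $a_i$ independently (which is only well-defined up to $n$-th roots of unity), I would pick $b_i\in M^\times$ with $b_i^n=a_i$ for $1\le i\le m-1$ and then \emph{define} $b_m\colonequals\prod_{i=1}^{m-1}b_i^{-1}$; using $\prod_i a_i=1$ one gets $b_m^n=\prod_{i=1}^{m-1}a_i^{-1}=a_m$, so now $N_{L_i\otimes_F M/M}(\mu_i)=b_i^n$ for \emph{all} $i$ together with the extra relation $\prod_i b_i=1$. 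That relation is exactly what is needed so that the tuple $z\colonequals(b_1^{n/n_1},\dots,b_m^{n/n_m})$ is covered by Lemma~\ref{R_trivial_elements} applied over the field $M$ with $\alpha_i=b_i$ (its last coordinate is $\prod_{i=1}^{m-1}b_i^{-n/n_m}=b_m^{n/n_m}$), whence $z\in RT_{L/F}(M)$.

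Multiplying by $z^{-1}$ I would replace the class by that of $(\nu_1,\dots,\nu_m)$ with $\nu_i\colonequals\mu_i b_i^{-n/n_i}$; since $N_{L_i\otimes_F M/M}(b_i^{-n/n_i})=b_i^{-n}$, each $\nu_i$ lies in $T_{L_i/F}(M)$, so $(\nu_1,\dots,\nu_m)\in\prod_i T_{L_i/F}(M)\subseteq T_{L/F}(M)$. Now the second bullet gives, for each $i$, an element $\rho_i\in T_{L_i/F}(F)$ with $\nu_i\rho_i^{-1}\in RT_{L_i/F}(M)$; then $(\rho_1,\dots,\rho_m)\in\prod_i T_{L_i/F}(F)\subseteq T_{L/F}(F)$ while $(\nu_1\rho_1^{-1},\dots,\nu_m\rho_m^{-1})\in\prod_i RT_{L_i/F}(M)$, and since $\prod_i T_{L_i/F}$ is a closed subgroup of $T_{L/F}$, functoriality of $R$-equivalence for this inclusion together with its additivity over products yields $(\nu_1\rho_1^{-1},\dots,\nu_m\rho_m^{-1})\in RT_{L/F}(M)$. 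Hence in $Q$ the original class equals that of $(\nu_1,\dots,\nu_m)$, which equals that of $(\rho_1,\dots,\rho_m)$, an element of $T_{L/F}(F)$, proving surjectivity.

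The main things I would be careful to justify, none of them serious obstacles, are: that Lemma~\ref{R_trivial_elements} goes through verbatim with base field $M$ (the rational curve $f(t)$ in its proof is defined over $M$ regardless of whether the algebras $L_i\otimes_F M$ are fields); the choice of $b_m$ as $\prod_{i<m}b_i^{-1}$, which is what eliminates any appearance of roots of unity; and the inclusion $\prod_i RT_{L_i/F}(M)\subseteq RT_{L/F}(M)$ coming from $\prod_i T_{L_i/F}\hookrightarrow T_{L/F}$. Everything else is formal manipulation inside the abelian group $T_{L/F}(M)/R$.
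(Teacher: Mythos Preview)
Your proposal is correct and follows essentially the same approach as the paper's proof: reduce via the first hypothesis to a tuple with $n$-th power norms, kill those norms using the $R$-trivial element furnished by Lemma~\ref{R_trivial_elements}, and then apply the second hypothesis coordinate-wise on the resulting element of $\prod_i T_{L_i/F}(M)$. Your explicit choice $b_m=\prod_{i<m}b_i^{-1}$ is exactly what the paper does implicitly (its $\beta_m=\prod_{i<m}\alpha_i^{-n/n_m}$ amounts to the same thing), and your added remarks about Lemma~\ref{R_trivial_elements} holding over $M$ and the inclusion $\prod_i RT_{L_i/F}(M)\subseteq RT_{L/F}(M)$ make explicit points the paper leaves tacit.
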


\begin{proof}

Let $(\mu_1, \dots, \mu_m) \in T_{L/F}(M).$
Then by assumption, there exists
$(\mu'_1, \dots, \mu'_m) \in T_{L/F}(F)$
with
$N_{L_i \otimes_F M / M}(\mu_i \mu^{'-1}_i)
= \alpha^{n}_{i}$ for some $\alpha_i \in M$
for $1 \leq i \leq m.$

Let us consider $x_i = \mu_i \mu^{'-1}_i.$
Then it is enough to show that the class
of $(x_1, \dots, x_m)$ is in the image of
the map $T_{L/F}(F) \rightarrow T_{L/F}(M)/R.$ \\

Now we consider
$$\displaystyle (\beta_1, \dots, \beta_{m-1}, \beta_m)
=(\alpha^{n/n_1}_1, \alpha^{n/n_2}_2, \dots,
\alpha^{n/n_{m-1}}_{m-1},
\prod_{i=1}^{m-1}\alpha_{i}^{-n/n_{m}})$$
in $T_{L/F}(M).$
Then by Lemma \ref{R_trivial_elements},
$\displaystyle (\beta_1, \dots, \beta_m) \in RT_{L/F}(M).$
Thus, it is enough to show that the class of
$\displaystyle (x_1 \beta^{-1}_{1}, \dots, x_m \beta^{-1}_{m})$
lies in the image of the map
$\displaystyle T_{L/F}(F) \rightarrow T_{L/F}(M)/R.$
For $1 \leq i \leq m,$
$\displaystyle x_i \beta^{-1}_{i} \in T_{L_i/F}(M).$
Now, by assumption, there exists
$\displaystyle \gamma_i \in T_{L_i/F}(F)$ mapping to
the class of $x_i \beta^{-1}_{i} \in T_{L_i/F}(M)/R$
for $1 \leq i \leq m.$
Then $(\gamma_1, \dots, \gamma_m)$ maps to
the class of
$\displaystyle (x_1 \beta^{-1}_{1}, \dots, x_m \beta^{-1}_{m})$
in $T_{L/F}(M)/R.$ Hence we are done.

\end{proof}

\begin{lemma}
\label{Henselian_to_completion_singleext_surjection}

Let $F$ be a Henselian
discretely valued field
with residue field $\kappa$ and
with completion $\hat{F}$.
Assume that $F$ has a
primitive $n^2$-th root of unity $\rho_{n^2}$.
Let $L/F$ be a finite separable
field extension of degree $n$ which is
not divisible by $\text{char}(\kappa)$.
Let $T_{L/F}$ be
the associated norm one torus.
Then the natural map
$$T_{L/F}(F) \to T_{L/F}(\hat{F})/R$$
is surjective.

\end{lemma}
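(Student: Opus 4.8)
The plan is to reduce the surjectivity statement over the Henselian field $F$ to a statement about norms over the residue field (or a tamely ramified extension of it), then lift using Hensel's lemma together with the abundance of roots of unity. Let me set up the situation: write $L = F(\theta)$ for some $\theta$, and let $\hat L = L \otimes_F \hat F$. Since $[L\colon F] = n$ is prime to $\mathrm{char}(\kappa)$, the extension $L/F$ is a product of tamely ramified local extensions; by passing to a factor at a time (a product of norm-one tori of the individual factors is, up to a quasi-trivial factor, the same torus mod $R$ by Lemma~\ref{torus_F_times_separable_algebra_is _R_trivial} and Proposition~\ref{only_nonisomorphic_fields_matter_multinorm_mod_R}) we may assume $\hat L/\hat F$ is a field, and similarly that $L/F$ is a field. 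Fix $\hat\mu \in T_{L/F}(\hat F)$, i.e. $\hat\mu \in \hat L^\times$ with $N_{\hat L/\hat F}(\hat\mu) = 1$; the goal is to produce $\mu \in L^\times$ with $N_{L/F}(\mu) = 1$ that is $R$-equivalent to $\hat\mu$ over $\hat F$.

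\textbf{Step 1: Normalize the valuation of $\hat\mu$.} Let $w$ be the valuation on $\hat L$ extending the valuation $v$ on $\hat F$; write $e$ for the ramification index. Choosing a uniformizer and using Lemma~\ref{R_trivial_elements} (with $m = 2$, one factor $L$ and one factor $F$, or more precisely its evident analogue for a single extension — an element of the form $(\alpha^{n/n_1}, \dots)$ is $R$-trivial), I can twist $\hat\mu$ by an $R$-trivial element coming from $T_{L/F}(F)$ so that $w(\hat\mu) = 0$, i.e. $\hat\mu$ is a unit in the valuation ring of $\hat L$. Here it is important that a uniformizer of $\hat L$ can be taken in $L$ after adjusting by a unit, which holds since $L/F$ is already dense in $\hat L/\hat F$ in the appropriate sense (they have the same residue field and ramification index, both being tamely ramified of the same degree).

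\textbf{Step 2: Approximate the unit.} Now $\hat\mu$ is a unit with $N_{\hat L/\hat F}(\hat\mu) = 1$. Since $L$ is dense in $\hat L$ and $L$, $\hat L$ have the same residue field, I want to find a unit $\mu_0 \in L^\times$ with $\mu_0 \equiv \hat\mu \pmod{\mathfrak{m}_{\hat L}^N}$ for large $N$; then $N_{L/F}(\mu_0)$ is a unit in $\hat F$ congruent to $1$ modulo a high power of $\mathfrak m_{\hat F}$. Because $\hat F$ contains $\rho_{n^2}$ (hence $\rho_n$), Hensel's lemma shows that $N_{L/F}(\mu_0)$, being a $1$-unit, is an $n$-th power in $\hat F$; in fact I can arrange $N_{L/F}(\mu_0) \in \hat F^{\times n}$ outright. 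This is where the roots-of-unity hypothesis is used: one needs the $n$-th power map on $1$-units to be surjective, which is automatic in residue characteristic prime to $n$, but the cleaner bookkeeping (and the reason $\rho_{n^2}$ appears) is to also control $n$-th powers inside $L$ in Step~3 and to invoke the kind of argument in the proof of Theorem~\ref{branch_to_P_multinorm}.

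\textbf{Step 3: Correct the norm and finish.} Having $\mu_0 \in L^\times$ a unit with $\hat\mu \mu_0^{-1}$ close to $1$ and with $N_{L/F}(\mu_0) = c^n$ for some $c \in \hat F^\times$ (a unit), I invoke the fact that $N_{L\otimes_F\hat F/\hat F}(\hat\mu\mu_0^{-1})^{-1}(\hat\mu\mu_0^{-1})^n \in RT_{L/F}(\hat F)$ by Lemma~\ref{nthpower}; combined with Lemma~\ref{R_trivial_elements} applied to $c$, this lets me modify $\mu_0$ by an $R$-trivial element over $\hat F$ so that the modified element has norm exactly $1$ — i.e. it lies in $T_{L/F}(F) \subseteq T_{L/F}(\hat F)$ — while the modification only changes the $R$-class by elements coming from $T_{L/F}(F)$ and from $RT_{L/F}(\hat F)$. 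Chasing through, the resulting element $\mu \in T_{L/F}(F)$ maps to the class of $\hat\mu$ in $T_{L/F}(\hat F)/R$, which is exactly what is required. \textbf{The main obstacle} I expect is Step~3: matching the norm exactly to $1$ over the non-complete field $L$ (not just over $\hat F$) while keeping track that all the corrections are either $R$-trivial or come from $F$-rational points. The delicate point is that $\hat\mu\mu_0^{-1}$ being an $n$-th power \emph{in $\hat L$} need not mean it is one in $L$; the fix is to not demand that, but instead to use Lemma~\ref{nthpower} to absorb the $n$-th-power discrepancy into $RT_{L/F}(\hat F)$ and only require the genuinely $F$-rational corrections to come from $F^\times$, which is what Lemma~\ref{R_trivial_elements} supplies. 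Once the reduction to a single field extension is in place and the uniformizer normalization is done, the rest is a Hensel-plus-approximation argument of the same flavor as the proof of Theorem~\ref{branch_to_P_multinorm}.
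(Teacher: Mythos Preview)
Your overall strategy---approximate in $L$, then correct the norm---matches the paper's, but Step~3 as written has a genuine gap, and two of the lemmas you invoke do not do what you need.

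First, Step~1 is unnecessary: any $\hat\mu\in T_{L/F}(\hat F)$ is automatically a unit in $\hat L$, since $N_{\hat L/\hat F}(\hat\mu)=1$ forces $f\cdot w_{\hat L}(\hat\mu)=v_{\hat F}(1)=0$. The paper simply starts from this observation. Also, Lemma~\ref{R_trivial_elements} is a statement about \emph{multinorm} tori (elements of the shape $(\alpha_1^{n/n_1},\dots,\prod\alpha_i^{-n/n_m})$); it has no content for a single norm-one torus, so ``Lemma~\ref{R_trivial_elements} applied to $c$'' does not supply anything here.

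The real problem is your use of Lemma~\ref{nthpower}. Applying it to $\hat\mu\mu_0^{-1}$ gives $c^{n}(\hat\mu\mu_0^{-1})^{n}=\bigl(\hat\mu(\mu_0c^{-1})^{-1}\bigr)^{n}\in RT_{L/F}(\hat F)$; that is, the $n$-th power of your candidate quotient is $R$-trivial. This does not make the quotient itself $R$-trivial, and nothing in your sketch closes that gap. The paper's argument explains exactly why $\rho_{n^2}$ is needed: approximate so well that $u=va^{n^2}$ with $v\in L$; then $N_{L/F}(v)\in F$ is an $n^2$-th power in $\hat F$, hence (Hensel in the Henselian $F$) $N_{L/F}(v)=b^{n^2}$ with $b\in F$, so $vb^{-n}\in T_{L/F}(F)$ and $u(vb^{-n})^{-1}=(a^{n}b)^{n}$. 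One checks $N_{\hat L/\hat F}(a^{n}b)$ is an $n$-th root of unity, say $\rho_{n^2}^{ni}$. The hypothesis $\rho_{n^2}\in F$ is precisely what provides an $n$-th root $\rho_{n^2}^{i}\in F$ of this, so that $a^{n}b\,\rho_{n^2}^{-i}$ already lies in $T_{L/F}(\hat F)$; now its $n$-th power is $R$-trivial by \cite[Proposition~15]{colliot1977r}, and one concludes that $u$ is $R$-equivalent to $vb^{-n}\rho_{n^2}^{ni}\in T_{L/F}(F)$. Your write-up never isolates this root-of-unity correction, and without it Step~3 does not go through.
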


\begin{proof}
Since $F$ is Henselian,
$L$ is also Henselian and
$L\otimes_F \hat{F}$ is a field and
completion of $L$.
Let $u \in T_{L/F}(\hat{F}).$
Then $u$ is a unit of norm $1$ on $L\otimes_F \hat{F}$, under the identification of $T_{L/F}(\hat{F})$ with the norm $1$ elements.
Since $n$ is not divisible by $\text{char}(\kappa)$, if $v \in L$ is sufficiently close to $u$ in $L\otimes_F \hat{F}$ then $u = v a^{n^2}$ for some $a \in L\otimes_F \hat{F}$.
Since $N_{L/F}(v) [N_{L\otimes_F \hat{F}/\hat{F}}(a)]^{n^2}
= N_{L\otimes_F \hat{F}/\hat{F}}(u) =  1$,
$N_{L/F}(v) \in \hat{F}^{\times n^2}$.
Since $F$ is Henselian, there exists
$b \in F$ such that $N_{L/F}(v) = b^{n^2}$.
Then $N_{L/F}(vb^{-n})=1.$
We have $ u (vb^{-n})^{-1} = (a^nb)^{n}
\in T_{L/F}(\hat{F})$.
Since  $N_{L\otimes_F \hat{F}/\hat{F}}(a^nb)^n = 1$,
we have $N_{L\otimes_F \hat{F}/\hat{F}}(a^nb)  = \rho^{ni}_{n^2}$
for some $i$, $0 \leq i \leq n-1$.
Then $N_{L\otimes_F \hat{F}/\hat{F}}(a^nb\rho^{-i}_{n^2}) = 1$.
Let us consider $w = vb^{-n} \rho^{-ni}_{n^2}$. Then 
$N_{L/F}(w)=1$ and $w \in T_{L/F}(F)$ and
$uw^{-1} = (a^n b \rho^{i}_{n^2})^n.$
Since $(a^n b \rho^{i}_{n^2})^n \in RT_{L/F}(\hat{F})$
\cite[Proposition 15]{colliot1977r},
we conclude that $uw^{-1} \in RT_{L/F}(\hat{F})$
and hence we are done.

\end{proof}

\begin{lemma}
\label{F_to_F_Henselian}
Let $F$ be a Henselian discretely valued field
and $\hat{F}$ be its completion. Assume that the valuation ring of $F$ is excellent. 
Let $\kappa$ be the residue field of $F$.
Let $L_1, L_2, \dots, L_m$ be finite separable
extensions of $F$ with $[L_i \colon F] = n_i$.
Assume that $F$ has all primitive $n^2_i$-\rm{th}
roots of unity for all $i$, $1 \leq i \leq m$.
Assume that $\text{char}(\kappa)$ does not divide $n_i$
for $1 \leq i \leq m$.
Let $\displaystyle L = \prod_{i=1}^{m} L_i$.\\
Then $$T_{L/F}(F) \to T_{L/F}(\hat{F})/R$$
is surjective.
\end{lemma}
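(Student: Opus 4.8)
The plan is to reduce the multinorm statement to the single-extension statement already established in Lemma \ref{Henselian_to_completion_singleext_surjection}, using the abstract reduction provided by Lemma \ref{surjective_for_general_extensions}. Set $n = \text{lcm}(n_1,\dots,n_m)$. First I would verify the second hypothesis of Lemma \ref{surjective_for_general_extensions}: for each $i$, since $F$ is Henselian, has a primitive $n_i^2$-th root of unity (indeed all primitive $n_j^2$-th roots of unity), and $[L_i:F] = n_i$ is prime to $\text{char}(\kappa)$, Lemma \ref{Henselian_to_completion_singleext_surjection} applied with the extension $L_i/F$ and $\hat{F}$ in place of the completion gives that $T_{L_i/F}(F) \to T_{L_i/F}(\hat F)/R$ is surjective. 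So that hypothesis holds for all $i$.

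The main work is checking the first hypothesis of Lemma \ref{surjective_for_general_extensions}: given $(\mu_1,\dots,\mu_m) \in T_{L/F}(\hat F)$, I must produce $(\mu'_1,\dots,\mu'_m) \in T_{L/F}(F)$ with $N_{L_i\otimes_F \hat F/\hat F}(\mu_i\mu_i'^{-1}) \in \hat F^{\times n}$ for every $i$. Here I would mimic the argument inside the proof of Lemma \ref{Henselian_to_completion_singleext_surjection}, applied simultaneously in each coordinate. Since $F$ is Henselian, each $L_i\otimes_F \hat F$ is a product of Henselian fields (a field if $L_i/F$ stays a field), and approximating each $\mu_i$ by an element $v_i \in L_i\otimes_F F$ and using that $\text{char}(\kappa) \nmid n_i$, one writes $\mu_i = v_i a_i^{n^2}$ for suitable $a_i$. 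The norm relation $\prod_i N_{L_i/F}(\mu_i) = 1$ together with $\prod_i N_{L_i/F}(v_i) \in \hat F^{\times n^2}$ (each factor is individually close to a norm of $\mu_i$, hence an $n^2$-th power up to a controlled unit) lets one arrange, after multiplying the $v_i$ by appropriate elements $b_i^{-n_i/n_i}\cdots$ — more precisely elements of $F^\times$ and roots of unity lying in $F$ — that $(v'_1,\dots,v'_m) \in T_{L/F}(F)$ while $N_{L_i\otimes_F\hat F/\hat F}(\mu_i v_i'^{-1})$ is an $n$-th power. The presence of the $n_i^2$-th roots of unity in $F$ (hence the $n^2$-th roots of unity, since $n \mid n_1\cdots n_m$ but one needs to be slightly careful: the hypothesis is stated for each $n_i^2$, and $n = \text{lcm}(n_i)$ so $n^2 \mid \prod n_i^2$, and all those roots of unity are present) is exactly what allows the norm obstructions $N(a_i^{n_i}b_i^{-1})$, which a priori are $n_i$-th or $n^2$-th roots of unity in $\hat F$, to be killed by modifying $v_i$ by a root of unity already defined over $F$.

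Once both hypotheses of Lemma \ref{surjective_for_general_extensions} are in place, the conclusion $T_{L/F}(F) \to T_{L/F}(\hat F)/R$ surjective is immediate. The step I expect to be the main obstacle is the bookkeeping in the first hypothesis: correctly matching up the exponents $n$, $n_i$, and $n_i^2$ so that the norm-one condition is preserved over $F$ (not just over $\hat F$) after the approximation and the root-of-unity correction, and handling the case where some $L_i\otimes_F F$ is a nontrivial product of fields rather than a single field — though in that case the corresponding torus factor is rational and the argument is easier, so the genuinely hard case is when all $L_i/F$ remain field extensions. I would structure the write-up by first disposing of the split factors, then running the simultaneous approximation argument coordinate by coordinate exactly as in Lemma \ref{Henselian_to_completion_singleext_surjection}, and finally invoking Lemma \ref{surjective_for_general_extensions}.
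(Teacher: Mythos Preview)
Your overall framework---reducing to Lemma \ref{surjective_for_general_extensions} and verifying its second hypothesis via Lemma \ref{Henselian_to_completion_singleext_surjection}---is exactly what the paper does. The difference is in how the \emph{first} hypothesis of Lemma \ref{surjective_for_general_extensions} is verified. The paper does not run the root-of-unity argument of Lemma \ref{Henselian_to_completion_singleext_surjection} in every coordinate; instead it treats the coordinates asymmetrically. For $i=1,\dots,m-1$ it simply approximates $\mu_i$ by some $\mu'_i\in L_i$ with $\mu_i=\mu'_i\theta_i^{n}$ (Hensel, using $\text{char}(\kappa)\nmid n$). Then $\lambda:=\bigl(\prod_{i<m}N_{L_i/F}(\mu'_i)\bigr)^{-1}$ lies in $F^\times$ and differs from $N_{L_m\otimes_F\hat F/\hat F}(\mu_m)$ by an $n$-th power in $\hat F$, so $\lambda$ is a norm from $L_m\otimes_F\hat F$; by Artin's approximation theorem \cite[Thm 2.2.1]{artin1969algebraic} it is already a norm from $L_m/F$, giving $\mu'_m$ directly. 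No root-of-unity correction appears in this step at all.

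Your symmetric plan can be pushed through (approximate all $\mu_i$ by $v_i$ up to $n^2$-th powers, observe $\prod N_{L_i/F}(v_i)=b^{n^2}$ with $b\in F$ by Henselianity, then absorb $b$ into one coordinate via $v_m\mapsto v_m b^{-n^2/n_m}$), but note that this fix uses no roots of unity either: your claim that the $n_i^2$-th roots of unity are ``exactly what allows the norm obstructions\ldots to be killed'' in this step is a misdiagnosis. The roots of unity in the hypothesis are consumed only inside Lemma \ref{Henselian_to_completion_singleext_surjection}, i.e.\ in checking the \emph{second} hypothesis. Also, your worry about $L_i\otimes_F\hat F$ splitting is unfounded: since $F$ is Henselian and $L_i/F$ is a finite separable field extension, the valuation extends uniquely to $L_i$, so $L_i\otimes_F\hat F=\hat{L_i}$ is always a field.
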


\begin{proof}
Let $\mu \in T_{L/F}(\hat{F})/R$.
Then $\mu = (\mu_1, \dots ,\mu_m)$ with
$\mu_i \in L_i \otimes_F \hat{F}$
and $\displaystyle \prod_{i=1}^{m}
N_{L_i \otimes_F \hat{F}/\hat{F}}(\mu_i) = 1$.
Let
$\displaystyle n =
 \mathsf{lcm} (n_i \mid 1 \leq i \leq m ).$\\

We first show that for all
$(\mu_1, \dots, \mu_m) \in T_{L/F}(\hat{F})$,
there exists $(\mu'_1, \dots, \mu'_m) \in T_{L/F}(F)$
such that $N_{L_i\otimes_F \hat{F}/\hat{F}}
(\mu_i\mu^{'-1}_i) \in \hat{F}^{\times n}.$
Since $L_i \otimes_F \hat{F}$ is the completion
of $L_i$ for $1 \leq i \leq m-1$,
there exists $\mu_i' \in L_i$ with
$\mu_i = \mu_i' \theta_i^n$ for
some $\theta_i \in (L_i \otimes_F \hat{F})^{\times n}$.
Let $\displaystyle \lambda =
\prod_{i=1}^{m-1} N_{L_i /F}(\mu'_i)  \in F$.
Since $\displaystyle \lambda (\prod_{i=1}^{m-1}
N_{L_i \otimes_F \hat{F}/\hat{F}}(\theta_i)^n) =
\prod_{i=1}^{m-1} N_{L_i \otimes_F \hat{F}/\hat{F}}(\mu_i) =
N_{L_m \otimes_F \hat{F}/\hat{F}}(\mu_m)^{-1}$,
$\lambda$ is a norm from $L_m\otimes_F \hat{F}/\hat{F}$.
Since $F$ is Henselian and $\hat{F}$ is the completion of $F$,
 $\lambda $ is a norm from $L_m/F$ by
 \cite[Thm \Romannum{1}.\Romannum{1}$0$]{artin1969algebraic}.
Let $\mu'_m \in L_m$ with
$N_{L_m /\hat{F}}(\mu'_m) = \lambda^{-1}.$
 Then $\mu' = (\mu_1', \dots,\mu_m') \in T_{L/F}(F)$ and
 $N_{L_i\otimes_F \hat{F}/\hat{F}}(\mu_i\mu_i^{'-1})
 \in \hat{F}^{\times n}$ for $1 \leq i \leq m$.\\

 Now, by Lemma \ref{Henselian_to_completion_singleext_surjection},
 we get that the maps $T_{L_i/F}(F) \to T_{L_i/F}(\hat{F})/R$
 are surjective for all $i$, $1 \leq i \leq m$.
 Hence, by Lemma \ref{surjective_for_general_extensions},
 $T_{L/F}(F) \to T_{L/F}(\hat{F})/R$ is surjective.

 \end{proof}

\begin{remark}
If all the extensions $L_i/F$, $1 \leq i \leq m$
are cyclic of degree $n_i$,
then we do not need to assume that
$F$ has all primitive $n^2_i$-\rm{th}
roots of unity for all $i$, $1 \leq i \leq m$.
Since, by Lemma \ref{R_trivial_elements_cyclic_norm_one_tori},
$T_{L_i/F}(\hat{F})/R =\{1\}$.
\end{remark}

\begin{lemma}
\label{approximating_norm_from_eta_to_branch_multinorm}

Let $K$ be a complete discretely valued field
with residue field $\kappa$.
Let $F$ be the function field of a curve over $K$.
Let $\XX$ be a regular proper model of $F$
such that the reduced special fibre $X$ is a union of
regular curves with normal crossings. 
Let $\mathcal{P} \in X$ be a nonempty
finite set of closed points
containing all the intersection points.
Let $\eta$ be the generic point of one
of the components of $X$ and
let $(P,\eta)$ be a branch.
Let $L_{\eta}/F_{\eta}$ be a field extension of degree $n$ with
$n$ not divisible by $\text{char}(\kappa)$.
Assume that $L_{P,\eta} =
L_{\eta}\otimes_F F_{p,\eta}$ is a field.
Then for any $\lambda \in
N_{L_{P,\eta}/F_{P,\eta}}((L_{P,\eta})^{\times})$
and an integer
$N$ not divisible by $\text{char}(\kappa)$,
there exists a
$\lambda' \in N_{L_{\eta}/F_{\eta}}((L_{\eta})^{\times})$
such that
$\lambda \lambda'^{-1}$ is a
$N^{\rm{th}}$ power in $F_{P,\eta}^{\times}.$

\end{lemma}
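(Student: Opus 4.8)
The plan is to work inside the complete discretely valued field $F_{P,\eta}$, which is the completion of the two-dimensional-flavoured field $F_{P}$ at the valuation coming from $\eta$. Write $\pi$ for a parameter of the discrete valuation $\eta$ on $F_P$ (equivalently, a uniformizer of $F_{P,\eta}$) and let $\overline{F}_{P,\eta}=\kappa(U)_P$ denote the residue field; note $F_\eta$ sits inside $F_{P,\eta}$ and its residue field at $P$ is $\kappa(U)$, whose completion is $\kappa(U)_P$. Since $L_{P,\eta}/F_{P,\eta}$ is a field of degree $n$ with $n$ not divisible by $\mathrm{char}(\kappa)$, it is either unramified or a tamely ramified extension. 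First I would reduce to treating the two cases and extract the structure of a norm $\lambda=N_{L_{P,\eta}/F_{P,\eta}}(\mu)$: writing $\mu=u\,\pi_L^{j}$ with $u$ a unit and $\pi_L$ a uniformizer of $L_{P,\eta}$, one gets $\lambda=N(u)\,\pi^{e j}\cdot(\text{unit})$ in the unramified case (where $e=1$) and $\lambda=N(u)\cdot(\pm\pi)^{j}$-type expression in the ramified case, so in all cases $\lambda$ is, up to the explicit norm $N(u)$ of a unit and up to a power of $\pi$, controlled by residue-field data.

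The key step is to separate the ``unit part'' from the ``$\pi$-part.'' For the $\pi$-part: any integer power $\pi^{k}$ that actually occurs as (part of) a norm from $L_{P,\eta}$ is forced, by the valuation, to be a norm from $L_\eta/F_\eta$ as well — one uses that $\pi$ can be taken to lie in $F_\eta$, that $L_\eta/F_\eta$ has the same ramification behaviour over $\eta$ as $L_{P,\eta}/F_{P,\eta}$ (both are obtained from the same extension $L_\eta$), and Lemma \ref{R_trivial_elements_cyclic_norm_one_tori} together with the norm computations already carried out in Lemma \ref{1unramified_1ramified_ext_tori_mod_R_trivial} and Lemma \ref{both_ramified_ext_tori_mod_R_trivial}. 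For the unit part $N(u)$: here I would invoke a Hensel/approximation argument. The residue field extension attached to $L_{P,\eta}$ is (a completion of) the residue field extension attached to $L_\eta$, so a unit of $L_{P,\eta}$ can be approximated $\pi$-adically, after adjusting by an $N^{\mathrm{th}}$ power, by the image of a unit of $L_\eta$; applying $N$ and using that $N$ is continuous and that $N$ of an $N^{\mathrm{th}}$ power is an $N^{\mathrm{th}}$ power, one deduces $N_{L_{P,\eta}/F_{P,\eta}}(u)$ differs from $N_{L_\eta/F_\eta}(u')$ by an $N^{\mathrm{th}}$ power in $F_{P,\eta}^\times$ for a suitable $u'\in L_\eta^\times$. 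Here I would also use \cite[Corollary 5.5.]{parimala2018local}-type statements (an $N^{\mathrm{th}}$-power descent) if needed to clean up, exactly as in the proof of Theorem \ref{branch_to_P_multinorm}.

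Assembling: set $\lambda'=N_{L_\eta/F_\eta}(u'\,\pi_L^{\,j})\in N_{L_\eta/F_\eta}(L_\eta^\times)$ with $u'$ and the exponent $j$ chosen as above; then $\lambda\lambda'^{-1}$ equals a unit congruent to $1$ times a trivial $\pi$-power, hence an $N^{\mathrm{th}}$ power in $F_{P,\eta}^\times$. The main obstacle I anticipate is the bookkeeping in the tamely ramified case: matching uniformizers of $L_\eta$ and $L_{P,\eta}$ and making sure the ``$\pi$-part'' of the norm genuinely descends to $F_\eta$ rather than only to $F_{P,\eta}$ — this is where one must use that $L_\eta/F_\eta$ already realizes the relevant ramified cyclic (or, after a coprime-degree twist, abelian) sub-extension, so the offending power of $\pi$ is a norm from $L_\eta$ on the nose and not merely after completion. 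The approximation step for the unit part is comparatively soft once one has the right Henselian input, so I expect the ramification bookkeeping to be the crux.
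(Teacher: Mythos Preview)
Your outline follows the same two–step strategy as the paper: separate the ``$\pi$-part'' from the ``unit part'' of $\lambda$, handle the $\pi$-part by descent to $L_\eta$, and handle the unit part by approximation. However, the justifications you give at both steps are off, and one of them is a genuine gap.

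For the $\pi$-part, your appeal to Lemma~\ref{R_trivial_elements_cyclic_norm_one_tori} and Lemmas~\ref{1unramified_1ramified_ext_tori_mod_R_trivial}--\ref{both_ramified_ext_tori_mod_R_trivial} is misplaced: those results concern $R$-equivalence on multinorm tori and say nothing about whether a given power of the parameter is a norm from $L_\eta/F_\eta$. The paper's argument is more direct and is exactly what your last paragraph gestures at without pinning down: one invokes the structure of tame extensions to write $L_\eta = L_\eta^{nr}(\sqrt[e]{u\pi})$ with $u$ a unit in $L_\eta^{nr}$, so that the uniformizer $\sqrt[e]{u\pi}$ of $L_{P,\eta}$ already lies in $L_\eta$. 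Then any element $\mu\in L_{P,\eta}^\times$ is $\theta\,(\sqrt[e]{u\pi})^i$ with $\theta$ a unit, and the factor $N_{L_{P,\eta}/F_{P,\eta}}(\sqrt[e]{u\pi})^i = N_{L_\eta/F_\eta}(\sqrt[e]{u\pi})^i$ is automatically a norm from $L_\eta$. No case split, no $R$-equivalence lemmas, no \cite{parimala2018local}.

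For the unit part, your phrase ``approximated $\pi$-adically'' is misleading: both $L_\eta$ and $L_{P,\eta}$ are already complete for the $\pi$-adic valuation, so there is nothing to approximate there. The actual approximation happens in the \emph{residue fields}: $L(P,\eta)$ is the $P$-adic completion of $L(\eta)$, so one approximates $\bar\theta$ by some $\bar\phi\in L(\eta)$ closely enough that $\bar\theta\bar\phi^{-1}$ is an $N$-th power in the complete field $L(P,\eta)$, then lifts $\bar\phi$ to $\phi\in L_\eta^{nr}$ and applies Hensel. The paper also first uses Hensel to replace $\theta$ by a unit in $L_{P,\eta}^{nr}$ (at the cost of an $N$-th power), which makes the norm computation $N(\theta)=[N_{L_{P,\eta}^{nr}/F_{P,\eta}}(\theta)]^e$ transparent; you should do the same.
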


\begin{proof}

Let $L(\eta)$ and $F(\eta)$ denote the residue field
of $L_{\eta}$ and $F_{\eta}$, respectively.
Similarly, $L(P,\eta)$ and $F(P,\eta)$ denote
the residue field of $L_{P, \eta}$ and $F_{P,\eta}$,
respectively.
Then $F(P,\eta)$ and $L(P,\eta)$ are completions of
$F(\eta)$ and $L(\eta)$ respectively.
Let $L^{nr}_{\eta}$ and $L^{nr}_{P,\eta}$ be the maximal
unramified subextensions of $L_{\eta}/F_{\eta}$ and
$L_{P,\eta}/F_{P,\eta}$ respectively.
Since $[L_{P,\eta} \colon L^{nr}_{P, \eta}]
= [L_{\eta} \colon L^{nr}_{\eta}] = e$ and
$[L_{P,\eta} \colon F_{P,\eta}]
= n = [L_{\eta} \colon F_{\eta}],$
we get that $
[L(\eta) \colon F(\eta)] = [L^{nr}_{\eta} \colon F_{\eta}]
= n/e = [L^{nr}_{P,\eta} \colon F_{P,\eta}] = [L(P,\eta) \colon F(P,\eta)].$\\

Let $\pi \in F_{\eta}$ be a parameter. 
Since $L_{\eta}/L^{nr}_{\eta}$ is totally ramified,
we can write $L_{\eta}$ as
$L_{\eta} = L^{nr}_{\eta}(\sqrt[e]{u \pi})$,
where $u$ is a unit in $L^{nr}_{\eta}$
by \cite[Lemma 2.4]{parimala2018local}.
Then $L_{P,\eta} = L^{nr}_{P,\eta}(\sqrt[e]{u \pi})$.\\

Let $\mu \in L_{P,\eta}^{\times}$
with $\lambda = N_{L_{P,\eta}/F_{P,\eta}}(\mu)$.
Then we can write
$\mu = \theta (\sqrt[e]{u \pi} )^{i},$
where $\theta \in L_{P,\eta}$ is a unit
and $i$ is an integer.
Since $L_{P,\eta}$ and $L^{nr}_{P,\eta}$
have same residue field $L(P,\eta)$,
and since $N$ is not divisible by $\text{char}(\kappa)$,
by Hensel's lemma, there exists
$\theta'$ a unit in $L^{nr}_{P,\eta}$
and $\alpha \in L_{P,\eta}$
with $\theta = \theta'\alpha^{N}$.
Hence without loss of generality
we can assume that
$\theta \in L^{nr}_{P,\eta}.$
We have
$$\begin{array}{rcl}
\displaystyle \lambda =  N_{L_{P, \eta}/F_{P, \eta}}(\mu) & = &
[N_{L^{nr}_{P,\eta}/F_{P,\eta}}(\theta)]^{e}
[N_{L_{P,\eta}/F_{P,\eta}}(\sqrt[e]{u \pi} )]^{i}\\
& = & [N_{L^{nr}_{P,\eta}/F_{P,\eta}}(\theta)]^{e}
[N_{L_{\eta}/F_{\eta}}(\sqrt[e]{u \pi})]^{i}.
\end{array}
$$

Let $\overline{\theta}$ be the image of $\theta$
in the residue field $L(P,\eta)$.
We have $\overline{N_{L^{nr}_{P,\eta}/F_{P,\eta}}(\theta)} =
N_{L(P,\eta)/F(P,\eta)}(\overline{\theta})  \in F(P,\eta).$
Since $L(P,\eta)$ is completion of $L(\eta)$
and by Hensel's lemma,
there exists $\overline{\phi} \in L(\eta)$ such that
$\overline{\theta} \cdot  \overline{\phi}^{-1}$ is a
$N^{\rm{th}}$ power in $L(P,\eta)$.
Let $\phi \in L^{nr}_{\eta}$ such that
$\overline{\phi}$ is the image of $\phi$ in $L(\eta)$.
Then $N_{L(\eta)/F(\eta)}(\overline{\phi}) =
\overline{N_{L^{nr}_{\eta}/F_{\eta}}(\phi)}.$
We get that
$\overline{N_{L^{nr}_{P,\eta}/F_{P,\eta}}(\theta \phi^{-1})}$
is a $N^{\rm{th}}$ power in $F(P,\eta)$.
Hence, by Hensel's lemma, $$N_{L^{nr}_{P,\eta}/F_{P,\eta}}(\theta \phi^{-1})
= N_{L^{nr}_{P,\eta}/F_{P,\eta}}(\theta)
[N_{L^{nr}_{\eta}/F_{\eta}}(\phi)]^{-1}$$ is a
$N^{\rm{th}}$ power in $F_{P,\eta}$.
Now $\lambda' = N_{L^{nr}_{\eta}/F_{\eta}}(\sqrt[e]{u\pi}^{i}\phi)$
has the desired property.

\end{proof}

\begin{lemma}
\label{at_least_one_unramified_case_surjectivity_eta_to_branch_multinorm}
Let $K$ be a complete discretely valued field
with residue field $\kappa$
and $F$ be
the function field of a curve over $K.$
Assume that $\kappa$ is algebraically closed.
Assume that $L_i/F, 1\leq i \leq m,$ are
cyclic extensions with $[L_i:F] = p$,
where $p$ is a prime number.
Assume that $p \neq \text{char}(\kappa)$.
Let $T_{L/F}$ denote the multinorm torus associated to
the extensions $L_i/F$. 
Let $\XX$, $X$ and $\PP$ as in Lemma \ref{approximating_norm_from_eta_to_branch_multinorm}. Let $\eta$ be the generic point of
one of the components of $X \setminus \mathcal{P}$ and
let $(P,\eta)$ be a branch.
Assume that for at least one $i$,
$L_{i,P,\eta} = L_i \otimes_F F_{P,\eta}$ is unramified
over $F_{P,\eta}$.
Then the natural map
$$T_{L/F}(F_{\eta}) \rightarrow T_{L/F}(F_{P,\eta})/R$$
is surjective.
\end{lemma}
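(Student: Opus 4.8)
The plan is to deduce this from Lemma~\ref{surjective_for_general_extensions}, applied with base field $F_\eta$, overfield $F_{P,\eta}$, and the extensions $L_{i,\eta}/F_\eta$ (so that there $n=p$). First I would dispose of a degenerate case: if some $L_{i,P,\eta}$ fails to be a field, then $L\otimes_F F_{P,\eta}$ has a factor isomorphic to $F_{P,\eta}$, so by the argument in the proof of Lemma~\ref{torus_F_times_separable_algebra_is _R_trivial} (which applies verbatim to \'etale algebras) the torus $T_{L/F}\times_F F_{P,\eta}$ is $F_{P,\eta}$-rational; hence $T_{L/F}(F_{P,\eta})/R=\{1\}$ and the asserted map is trivially surjective. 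So from now on all $L_{i,P,\eta}$, and therefore all $L_{i,\eta}$, are fields; reindexing, I may assume $L_{1,P,\eta}/F_{P,\eta}$, and hence $L_{1,\eta}/F_\eta$, is unramified of degree $p$.

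In this situation the second hypothesis of Lemma~\ref{surjective_for_general_extensions} holds automatically: each $L_{i,P,\eta}/F_{P,\eta}$ is cyclic of degree $p$, so $T_{L_i/F}(F_{P,\eta})/R=\{1\}$ by Lemma~\ref{R_trivial_elements_cyclic_norm_one_tori} and the maps $T_{L_i/F}(F_\eta)\to T_{L_i/F}(F_{P,\eta})/R$ are trivially surjective. Thus it remains only to verify the first hypothesis of that lemma: given $(\mu_1,\dots,\mu_m)\in T_{L/F}(F_{P,\eta})$, to produce $(\mu'_1,\dots,\mu'_m)\in T_{L/F}(F_\eta)$ with $N_{L_{i,P,\eta}/F_{P,\eta}}(\mu_i\mu_i'^{-1})\in F_{P,\eta}^{\times p}$ for every $i$.

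Write $\lambda_i:=N_{L_{i,P,\eta}/F_{P,\eta}}(\mu_i)$, so that $\prod_{i=1}^m\lambda_i=1$. For $i=2,\dots,m$, Lemma~\ref{approximating_norm_from_eta_to_branch_multinorm} (applied with the integer $N=p$, prime to $\mathrm{char}(\kappa)$, and with $L_{i,P,\eta}$ a field) produces $\lambda_i'\in N_{L_{i,\eta}/F_\eta}(L_{i,\eta}^\times)$ with $\lambda_i/\lambda_i'\in F_{P,\eta}^{\times p}$. Set $\lambda_1':=\bigl(\prod_{i=2}^m\lambda_i'\bigr)^{-1}\in F_\eta^\times$; then $\prod_{i=1}^m\lambda_i'=1$, and since $\prod_i\lambda_i=1$ one computes $\lambda_1/\lambda_1'=\prod_{i=2}^m(\lambda_i'/\lambda_i)\in F_{P,\eta}^{\times p}$. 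Once we know that $\lambda_1'\in N_{L_{1,\eta}/F_\eta}(L_{1,\eta}^\times)$, we may choose $\mu_i'\in L_{i,\eta}^\times$ with $N_{L_{i,\eta}/F_\eta}(\mu_i')=\lambda_i'$ for each $i$; then $(\mu_i')\in T_{L/F}(F_\eta)$ and $N_{L_{i,P,\eta}/F_{P,\eta}}(\mu_i\mu_i'^{-1})=\lambda_i/\lambda_i'\in F_{P,\eta}^{\times p}$, as required.

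The remaining point --- which I expect to be the crux --- is that $\lambda_1'$ is a norm from $L_{1,\eta}$. Since $L_{1,P,\eta}/F_{P,\eta}$ is unramified (cyclic) of degree $p$, the valuation of any norm from it lies in $p\mathbb{Z}$, so $v(\lambda_1)\equiv 0\pmod p$; as $\lambda_1/\lambda_1'$ is a $p$-th power and $\lambda_1'\in F_\eta$, also $v(\lambda_1')\equiv 0\pmod p$, so $\lambda_1'=w\,\pi^{pk}$ for a uniformizer $\pi$ of $F_\eta$, an integer $k$, and $w\in\mathcal{O}_{F_\eta}^\times$. As $\pi^{pk}=N_{L_{1,\eta}/F_\eta}(\pi^k)$, it suffices to show that every unit of $\mathcal{O}_{F_\eta}$ is a norm from the unramified extension $L_{1,\eta}/F_\eta$. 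This is where the hypothesis that $\kappa$ is algebraically closed is used essentially: $\kappa(\eta)$ is the function field of a curve over $\kappa$, hence $\mathrm{Br}(\kappa(\eta))=0$ by Tsen's theorem, so the norm $\ell_1^\times\to\kappa(\eta)^\times$ on residue fields ($\ell_1$ the residue field of $L_{1,\eta}$) is surjective, its cokernel embedding into $\mathrm{Br}(\ell_1/\kappa(\eta))\subseteq\mathrm{Br}(\kappa(\eta))=0$; lifting a norm preimage of the residue of $w$ and then correcting by the norm on principal units --- which is surjective for an unramified extension of the complete field $F_\eta$ --- exhibits $w$ as a norm. In short, the unramified extension $L_1$ is exactly what lets me absorb the ``closing-up'' factor $\prod_{i\geq 2}\lambda_i'$ into a single norm, while all the remaining bookkeeping is handled by Lemmas~\ref{surjective_for_general_extensions} and~\ref{approximating_norm_from_eta_to_branch_multinorm}.
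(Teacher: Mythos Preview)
Your proposal is correct and follows essentially the same approach as the paper's proof: reduce to the case where every $L_{i,P,\eta}$ is a field, use Lemma~\ref{approximating_norm_from_eta_to_branch_multinorm} to approximate the norms $\lambda_i$ for all indices except the unramified one, close up the product using norm surjectivity from the unramified extension (via Tsen, which the paper phrases through the $C_1$ property of $\kappa(\eta)$ rather than $\mathrm{Br}(\kappa(\eta))=0$), and finish with Lemma~\ref{surjective_for_general_extensions} together with Lemma~\ref{R_trivial_elements_cyclic_norm_one_tori}. The only differences are cosmetic (you put the unramified extension at index $1$ instead of $m$, and you spell out the lift from residue-field norm surjectivity to unit norm surjectivity slightly more explicitly).
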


\begin{proof}

We can assume that  all $L_{i,P,\eta}$
are fields. Otherwise, if $L_{i,P,\eta}$ is
not a field for some $i$, then $L_{i,P,\eta}$
is a product of $p$ copies of $F_{P,\eta}$.
Thus, in this case, we get that $T_{L/F}(F_{P,\eta})/R = \{1\}$
by Lemma \ref{torus_F_times_separable_algebra_is _R_trivial}.
Hence the conclusion of the lemma holds.\\

Let $(\mu_1, \dots,\mu_{m-1}, \mu_m)
\in T_{L/F}(F_{P,\eta}).$
Let $\lambda_i = N_{L_{i,P,\eta}/F_{P,\eta}}(\mu_i)$
for all $i$, $1 \leq i \leq m.$ Then
$\displaystyle \prod_{i=1}^{m} \lambda_i =1.$
Without loss of generality, we can assume that the
extension $L_{m,P,\eta}/F_{P,\eta}$ is unramified.
Let $\pi \in F_{\eta}$ be a parameter. Then
$\pi$ is also a parameter in $F_{P,\eta}$ and
$L_{m,P,\eta}$.
By Lemma \ref{approximating_norm_from_eta_to_branch_multinorm},
for $i$, $1 \leq i \leq m-1$, we can find $\mu'_i \in L_{i,\eta}$ with
such that
$\lambda_i [N_{L_{i,\eta}/F_{\eta}}(\mu'_i)]^{-1}
 = \alpha^{p}_i$ for some
 $\alpha_i \in F_{P,\eta}^{\times}.$
For $1 \leq i \leq m-1$, let $\lambda'_i = N_{L_{i,\eta}/F_{\eta}}(\mu'_i).$
 Since $\displaystyle \prod_{i=1}^{m-1} \lambda_i = (\lambda_m)^{-1}$
 and $\lambda_m = N_{L_{m,P,\eta}/F_{P,\eta}}(\mu_m)$,
$p$ divides $\displaystyle\text{val}_{\pi}(\prod_{i=1}^{m-1} \lambda_i)$
where $\text{val}_{\pi}$ denotes the valuation on
$F_{P,\eta}$ with parameter $\pi$.
Since, for $1 \leq i \leq m-1$, $\lambda_i {\lambda'}^{ -1}_i
 = \alpha^{p}_i$ with
 $\alpha_i \in F_{P,\eta}^{\times},$
 $p$ also divides $\displaystyle\text{val}_{\pi}(\prod_{i=1}^{m-1} {\lambda'}^{-1}_i)$.\\

Let $F(\eta)$ and $L(m,\eta)$ be
the residue field of
$F_{\eta}$ and $L_{m,\eta}$ respectively.
Since $\kappa$ is algebraically closed,
$F(\eta)$ is a $C_1$ field by \cite[Thm 6.2.8, p-143]{philippe2006central} and thus has cohomological dimension $\leq 1$ by \cite[Thm 6.2.3, p-143]{philippe2006central}.
Note that $L(m,\eta)/ F(\eta)$ is a degree $p$ separable extension, thus the norm map $N_{L(m,\eta)/ F(\eta)}$
is surjective by \cite[Thm 6.1.8, p-138]{philippe2006central}.
Since $p$  divides
$\displaystyle\text{val}_{\pi}(\prod_{i=1}^{m-1} {\lambda'}^{-1}_i)$,
we can find a
$\mu'_m \in L^{\times}_{m,\eta}$ with
$N_{L_{m,\eta}/F_{\eta}}(\mu'_m) = \prod_{i=1}^{m-1} {\lambda'}^{-1}$.
Then $(\mu'_1,\dots, \mu'_m) \in
 T_{L/F}(F_{\eta})$ and
 $N_{L_{i,P,\eta}/F_{P,\eta}}(\mu_i{\mu'}^{-1}_i)
 \in F^{\times p}_{P,\eta}$ for $1 \leq i \leq m$.
 Also, since the extensions
 $L_{i,P,\eta}/F_{P,\eta}$ are cyclic,
 $T_{L/F}(F_{P,\eta})/R =\{1\}$
 by Lemma \ref{R_trivial_elements_cyclic_norm_one_tori}.
 Hence, the result follows by Lemma \ref{surjective_for_general_extensions}.

 \end{proof}

\begin{lemma}
\label{surjectivity_eta_to_branch_multinorm}

Let $K$ be a complete discretely valued field
with residue field $\kappa$
and $F$ be
the function field of a curve over $K.$
Assume that $\kappa$ is algebraically closed.
Assume that $L_i/F, 1\leq i \leq m,$ are
quadratic extensions.
Assume that $\text{char}(\kappa) \neq 2$.
Let $T_{L/F}$ denote the multinorm torus associated to
the extensions $L_i/F$. 
Let $\XX$, $X$ and $\PP$ as in Lemma \ref{approximating_norm_from_eta_to_branch_multinorm}. Let $\eta$ be the generic point of
one of the components of $X \setminus \mathcal{P}$ and
let $(P,\eta)$ be a branch.\\

Then the natural map
$$T_{L/F}(F_{\eta}) \rightarrow T_{L/F}(F_{P,\eta})/R$$
is surjective.

\end{lemma}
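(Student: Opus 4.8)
The plan is to reduce the problem to the cases already handled for pairs of extensions, exploiting that every $L_i$ here is quadratic, so each $L_{i,P,\eta}$ is either split (a product of two copies of $F_{P,\eta}$) or a quadratic field extension which is either unramified or ramified. First I would dispose of the trivial situations: if some $L_{i,P,\eta}$ is not a field, then by Lemma \ref{torus_F_times_separable_algebra_is _R_trivial} we have $T_{L/F}(F_{P,\eta})/R=\{1\}$ and the map is trivially surjective; so we may assume every $L_{i,P,\eta}$ is a quadratic field extension of $F_{P,\eta}$. Next, if at least one of these is unramified over $F_{P,\eta}$, then the statement follows immediately from Lemma \ref{at_least_one_unramified_case_surjectivity_eta_to_branch_multinorm} (with $p=2$). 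Hence the heart of the matter is the case in which \emph{every} $L_{i,P,\eta}/F_{P,\eta}$ is a ramified quadratic extension.

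For the remaining ramified case, I would use the structure of ramified quadratic extensions of a complete discretely valued field. Fix a parameter $\pi\in F_\eta$ (also a parameter in $F_{P,\eta}$). Since $\mathrm{char}(\kappa)\neq 2$ and the residue field $F(\eta)$ is $C_1$ (as $\kappa$ is algebraically closed, by \cite[Thm 6.2.8, p-143]{philippe2006central}), the square classes of units in $F_{P,\eta}$ all come from $F_\eta$ up to squares; a ramified quadratic extension of $F_{P,\eta}$ has the form $F_{P,\eta}(\sqrt{u\pi})$ with $u$ a unit, and after adjusting $u$ by a square we may take $u\in F_\eta^\times$, so $L_{i,P,\eta}=L_{i,\eta}\otimes_{F_\eta}F_{P,\eta}$ with $L_{i,\eta}=F_\eta(\sqrt{u_i\pi})$. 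Given $(\mu_1,\dots,\mu_m)\in T_{L/F}(F_{P,\eta})$, write $\lambda_i=N_{L_{i,P,\eta}/F_{P,\eta}}(\mu_i)$ with $\prod_i\lambda_i=1$. As in the proof of Lemma \ref{at_least_one_unramified_case_surjectivity_eta_to_branch_multinorm}, I would use Lemma \ref{approximating_norm_from_eta_to_branch_multinorm} to approximate each $\lambda_i$, for $1\le i\le m-1$, by a norm $\lambda_i'=N_{L_{i,\eta}/F_\eta}(\mu_i')$ with $\lambda_i(\lambda_i')^{-1}\in F_{P,\eta}^{\times 2}$; the valuation-parity bookkeeping forces $2\mid \mathrm{val}_\pi(\prod_{i=1}^{m-1}\lambda_i')$, matching the parity coming from $\lambda_m$ being a norm from the ramified extension $L_{m,P,\eta}$. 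Then, since $N_{L_{m,\eta}/F_\eta}$ is surjective on units (again because $F(\eta)$ is $C_1$ and $L_{m,\eta}/F_\eta$ is ramified, so the residue norm is an isomorphism) and because $2\mid\mathrm{val}_\pi$, I can solve $N_{L_{m,\eta}/F_\eta}(\mu_m')=\prod_{i=1}^{m-1}(\lambda_i')^{-1}$, obtaining $(\mu_1',\dots,\mu_m')\in T_{L/F}(F_\eta)$ with $N_{L_{i,P,\eta}/F_{P,\eta}}(\mu_i(\mu_i')^{-1})\in F_{P,\eta}^{\times 2}$ for all $i$. Finally, since each $L_{i,P,\eta}/F_{P,\eta}$ is cyclic of degree $2$, Lemma \ref{R_trivial_elements_cyclic_norm_one_tori} gives $T_{L/F}(F_{P,\eta})/R=\{1\}$, so Lemma \ref{surjective_for_general_extensions} (with $n=2$, $M=F_{P,\eta}$, $F$ replaced by $F_\eta$) finishes the argument.

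The main obstacle I expect is not any single deep step but correctly handling the case division and the valuation/square-class bookkeeping uniformly: one must ensure that in the all-ramified case the parity constraint $2\mid\mathrm{val}_\pi(\prod_{i=1}^{m-1}\lambda_i')$ really does follow from $\prod_i\lambda_i=1$ together with $\lambda_m\in N_{L_{m,P,\eta}/F_{P,\eta}}$ — i.e. that the ramified norm contributes the expected parity — and that the reduction $u_i\in F_\eta^\times$ is legitimate, which rests on the surjectivity of $F_\eta^\times/F_\eta^{\times 2}\to F_{P,\eta}^\times/F_{P,\eta}^{\times 2}$ coming from Hensel's lemma and the $C_1$ property of $F(\eta)$. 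Once these points are in place, the surjectivity in Lemma \ref{surjective_for_general_extensions} does all the remaining work, exactly as in the at-least-one-unramified case, so the proof is essentially a bookkeeping refinement of Lemma \ref{at_least_one_unramified_case_surjectivity_eta_to_branch_multinorm} together with the observation that ramified quadratic extensions are themselves base-changed from $\eta$.
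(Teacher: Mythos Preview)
Your treatment of the split case and of the case where at least one $L_{i,P,\eta}$ is unramified is correct and matches the paper exactly. The difficulty is entirely in the all-ramified case, and there your argument has a genuine gap.

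Both of the facts you flag as ``the main obstacle'' actually fail. First, the parity claim is false: for a \emph{ramified} quadratic extension $L_{m,P,\eta}/F_{P,\eta}$ one has $N(\sqrt{u_m\pi})=-u_m\pi$, so norms can have \emph{any} valuation, and there is no reason for $2\mid \mathrm{val}_\pi\bigl(\prod_{i<m}\lambda_i'\bigr)$. Second, and more seriously, the claim that $N_{L_{m,\eta}/F_\eta}$ is surjective on units is false. For a totally tamely ramified quadratic extension, the reduction of the norm on units is the \emph{squaring} map on the residue field (not the identity): if $u=a+b\sqrt{u_m\pi}$ with $a$ a unit, then $\overline{N(u)}=\bar a^{\,2}$. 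Since the residue field $F(\eta)$ is a function field of a curve over $\kappa$ (e.g.\ $\kappa(t)$), it has many nontrivial square classes, so the unit norms are exactly the unit squares, not all units. You are conflating ``the norm of the residue extension is the identity'' (true, since the residue extension is trivial) with ``the reduction of $N_{L_{m,\eta}/F_\eta}$ is the identity'' (false). Consequently there is no reason $\prod_{i<m}(\lambda_i')^{-1}$ lies in $N_{L_{m,\eta}/F_\eta}(L_{m,\eta}^\times)$, and condition~(i) of Lemma~\ref{surjective_for_general_extensions} is not established.

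The paper sidesteps all of this with a much shorter argument: since $F_{P,\eta}$ has residue field $\kappa(\eta)_P$, and $\kappa(\eta)_P$ is itself complete discretely valued with algebraically closed residue field, one computes $\kappa(\eta)_P^\times/(\kappa(\eta)_P^\times)^2\simeq\mathbb{Z}/2$, so there are at most two non-isomorphic ramified quadratic extensions of $F_{P,\eta}$. Hence, over $F_{P,\eta}$, the $L_{i,P,\eta}$ fall into at most two isomorphism classes, and Proposition~\ref{only_nonisomorphic_fields_matter_multinorm_mod_R} reduces the computation of $T_{L/F}(F_{P,\eta})/R$ to the case $m\le 2$. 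Then Lemma~\ref{both_ramified_ext_tori_mod_R_trivial} (or Lemma~\ref{R_trivial_elements_cyclic_norm_one_tori} if $m=1$) gives $T_{L/F}(F_{P,\eta})/R=\{1\}$ outright, and the surjectivity of the map from $T_{L/F}(F_\eta)$ is trivial. No lift to $F_\eta$ is needed at all.
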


\begin{proof}

We can assume that  all $L_{i,P,\eta}$
are fields. Otherwise, if $L_{i,P,\eta}$ is
not a field for some $i$, then $L_{i,P,\eta}$
is a product of two copies of $F_{P,\eta}$.
Thus, in this case, we get that $T_{L/F}(F_{P,\eta})/R = \{1\}$
by Lemma \ref{torus_F_times_separable_algebra_is _R_trivial}.
Hence the conclusion of the lemma holds.\\

\textbf{Case A} Let us assume that
at least one of the extensions
$L_{i,P,\eta}/F_{P,\eta}$ is
unramified. Then we have the result
by Lemma \ref{at_least_one_unramified_case_surjectivity_eta_to_branch_multinorm}.\\

\textbf{Case B} Now we assume that
all the extensions $L_{i,P,\eta}/F_{P,\eta}$
are ramified. 
The field $F_{P, \eta}$ is a complete discretely valued field with residue field $\kappa(\eta)_P$. Let $\pi$ be a parameter in $F_{P,\eta}$. Then by \cite[Lemma 2.4]{parimala2018local}, any ramified quadratic extension of $F_{P,\eta}$ is of the form $F_{P,\eta}(\sqrt{u\pi})$, where $u\in F_{P,\eta}$ is a unit. Note that $\kappa(\eta)_P$ is a complete discretely valued field with residue field an algebraically closed field. Let $\overline{u}$ be the residue of $u$ in $\kappa(\eta)_P$. Then $\overline{u}$ is a square in $\kappa(\eta)_P$ if and only if $\overline{u}$ is a unit in $\kappa(\eta)_P$. Thus, depending on whether $\overline{u}$ is a unit in $\kappa(\eta)_P$ or not, we conclude that there are exactly two non-isomorphic ramified quadratic extensions of $F_{P, \eta}$. 
In this case, by
Proposition \ref{only_nonisomorphic_fields_matter_multinorm_mod_R},
it is enough to consider the case when $m=2$
and $L_{1,P,\eta}/F_{P,\eta}$ and
$L_{2,P,\eta}/F_{P,\eta}$ are both ramified
quadratic extensions.
In this case, by Lemma \ref{both_ramified_ext_tori_mod_R_trivial},
we have $T_{L/F}(F_{P,\eta})/R =\{1\}.$

\end{proof}

\begin{lemma}
\label{branch_to_P_multinorm_general}

Let $K$ be a complete discretely valued field
with residue field $\kappa$
and $F$ be
the function field of a curve over $K.$ 
Assume that $L_i,$ $1\leq i \leq m,$ are
finite Galois extensions of degree $n_i$, where $n_i$ is a
natural number not divisible by $\text{char}(\kappa)$. 
Let $\XX$ be a regular proper model of $F$
such that the union of $ram_{\XX}(L_i/F)$ and
the reduced special fibre $X$ is a union of
regular curves with normal crossings.
Let $T_{L/F}$ denote the multinorm torus associated to
the extensions $L_i/F$.
Assume that the natural maps
$$ T_{L_i/F}(F_{P}) \rightarrow T_{L_i/F}(F_{P,U})/R $$
are surjective for $1 \leq i \leq m$.
Let $(P,U)$ be a branch.\\

Then the natural map
$$T_{L/F}(F_{P}) \rightarrow T_{L/F}(F_{P,U})/R$$
is surjective.

\end{lemma}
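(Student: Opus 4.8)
The plan is to reduce the multinorm statement to the single-extension case via Lemma~\ref{surjective_for_general_extensions}, exactly as in the proof of Lemma~\ref{at_least_one_unramified_case_surjectivity_eta_to_branch_multinorm}, but now approximating from $F_P$ to $F_{P,U}$ instead of from $F_\eta$ to $F_{P,\eta}$. So I would first set $n = \text{lcm}(n_i)$ and verify the two hypotheses of Lemma~\ref{surjective_for_general_extensions} for the field extension $F_{P,U}/F_P$. The second hypothesis is exactly our assumption: the maps $T_{L_i/F}(F_P) \to T_{L_i/F}(F_{P,U})/R$ are surjective. The first hypothesis requires that for any $(\mu_1,\dots,\mu_m) \in T_{L/F}(F_{P,U})$ there is $(\mu_1',\dots,\mu_m') \in T_{L/F}(F_P)$ with $N_{L_{i,P,U}/F_{P,U}}(\mu_i \mu_i'^{-1}) \in F_{P,U}^{\times n}$.

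To produce the $\mu_i'$, I would proceed as in Lemma~\ref{F_to_F_Henselian}: for $1 \le i \le m-1$, use the surjectivity assumption to pick $\mu_i' \in L_{i,P}$ whose class in $T_{L_i/F}(F_{P,U})/R$ agrees with $\mu_i$; then $\mu_i \mu_i'^{-1} \in RT_{L_i/F}(F_{P,U})$, and by Lemma~\ref{nthpower} (or directly from the description of $R$-trivial elements) one can arrange, after multiplying by a further element of $L_{i,P}$, that $N_{L_{i,P,U}/F_{P,U}}(\mu_i \mu_i'^{-1})$ is an $n$th power. Actually the cleaner route: write $\lambda_i = N_{L_{i,P,U}/F_{P,U}}(\mu_i)$, use the surjectivity of $T_{L_i/F}(F_P) \to T_{L_i/F}(F_{P,U})/R$ to find $\mu_i' \in L_{i,P}$ with $\lambda_i [N_{L_{i,P}/F_P}(\mu_i')]^{-1} \in F_{P,U}^{\times n}$ for $i \le m-1$ (this is the norm-approximation content, analogous to Lemma~\ref{approximating_norm_from_eta_to_branch_multinorm}), then define $\mu_m'$ via the $m$th coordinate by taking a norm preimage. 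The point where one must be careful is showing $\prod_{i=1}^{m-1} N_{L_{i,P}/F_P}(\mu_i')^{-1}$ is actually a norm from $L_{m,P}/F_P$: since $\prod_i \lambda_i = 1$ in $F_{P,U}$ and each $\lambda_i$ differs from $N_{L_{i,P}/F_P}(\mu_i')$ by an $n$th power, the product $\prod_{i=1}^{m-1} N_{L_{i,P}/F_P}(\mu_i')$ is, up to $n$th powers in $F_{P,U}^\times$, equal to $N_{L_{m,P,U}/F_{P,U}}(\mu_m)^{-1}$, hence a norm from $L_{m,P,U}/F_{P,U}$; then one uses that $L_{m,P}/F_P$ being a subextension of a nice enough setup (the union of $ram_\XX(L_i/F)$ and $X$ has normal crossings, so $L_{m,P}/F_P$ is controlled — in fact a product of fields each of which is either unramified or nicely ramified over a $2$-dimensional complete regular local ring) lets one descend the norm from $F_{P,U}$ to $F_P$, e.g.\ by Theorem~\ref{branch_to_P_multinorm} applied to $L_m$ alone, or by the Henselian-to-completion norm descent à la \cite[Thm 2.2.1]{artin1969algebraic}.

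Once both hypotheses of Lemma~\ref{surjective_for_general_extensions} are checked, that lemma gives directly that $T_{L/F}(F_P) \to T_{L/F}(F_{P,U})/R$ is surjective, which is the assertion. The main obstacle I anticipate is the norm-descent step just described --- establishing that a product of norms from the $L_{i,P,U}/F_{P,U}$ that comes from $F_P$ can be realized by an element of $L_{m,P}$ --- which is precisely where one needs the normal-crossings hypothesis on $\XX$ and a single-extension result like Theorem~\ref{branch_to_P_multinorm}; everything else is the same bookkeeping with $n$th powers and $R$-equivalence that appears in Lemmas~\ref{F_to_F_Henselian} and~\ref{at_least_one_unramified_case_surjectivity_eta_to_branch_multinorm}.
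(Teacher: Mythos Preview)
Your overall strategy matches the paper's: reduce to Lemma~\ref{surjective_for_general_extensions}, with its second hypothesis being exactly the assumed surjectivity of $T_{L_i/F}(F_P)\to T_{L_i/F}(F_{P,U})/R$. The difference, and the gap in your proposal, lies in how the first hypothesis is verified.

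You propose to ``use the surjectivity of $T_{L_i/F}(F_P)\to T_{L_i/F}(F_{P,U})/R$ to find $\mu_i'\in L_{i,P}$ with $\lambda_i[N_{L_{i,P}/F_P}(\mu_i')]^{-1}\in F_{P,U}^{\times n}$.'' This does not work: the surjectivity hypothesis concerns \emph{norm-one} elements of $L_{i,P,U}^\times$, whereas the $\mu_i$ have norm $\lambda_i\neq 1$ in general, so there is no way to extract a norm-approximation statement for $\lambda_i$ from it. The assumption on the individual $T_{L_i/F}$ is used only as the second hypothesis of Lemma~\ref{surjective_for_general_extensions}; it plays no role in the approximation step.

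The paper instead uses the normal-crossings structure directly, via results from \cite{mishra2021local}. By \cite[Lemma~4.7]{mishra2021local} one writes each $\lambda_i=\lambda_i'\,\alpha_i^{\,n}$ with $\lambda_i'=u_i\pi_1^{s_i}\pi_2^{t_i}\in F_P$ and $\alpha_i\in F_{P,U}$. Since $\lambda_i$ is a norm from $L_{i,P,U}$ and $\alpha_i^n$ is an $n_i$th power, $\lambda_i'$ is also a norm from $L_{i,P,U}$; then \cite[Theorem~4.9]{mishra2021local} (the single-extension case underlying Theorem~\ref{branch_to_P_multinorm}) furnishes $\mu_i'\in L_{i,P}$ with $N_{L_{i,P}/F_P}(\mu_i')=\lambda_i'$ for $1\le i\le m-1$, and the same argument gives $\mu_m'$ with norm $(\prod_{i<m}\lambda_i')^{-1}$. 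So the tool you flagged at the end is indeed the key, but it must be applied coordinate by coordinate from the start, not only as a descent step for the $m$th slot.
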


\begin{proof}

Let $\mu_i \in L_{i,P,U}$ for $1 \leq i \leq m$
with $N_{L_{i,P,U}/F_{P,U}}(\mu_i) = \lambda_i$
such that $\displaystyle \prod_{i=1}^{m} \lambda_i = 1.$
Let $\displaystyle n \colonequals
 \mathsf{lcm} (n_i \mid 1 \leq i \leq m)$.
By \cite[Lemma 4.7]{mishra2021local}, we can write $\lambda_i =
\lambda'_i \cdot (\alpha_i)^{n}$ for
some $\alpha_i \in F_{P,U},$
where $\lambda'_i = u_i \pi^{s_i}_1\pi^{t_i}_2$
for some $u_i \in \hat{R}^{\times}_{P}$ and integers $s_i,t_i$.
By \cite[Theorem 4.9]{mishra2021local}, we can choose $\mu'_i \in L_{i,P}$
with $N_{L_{i,P}/F_{P}}(\mu'_i) = \lambda'_i$
for $1 \leq i \leq m-1.$ Similarly, we choose
$\mu'_m \in L_{m,P}$ with
$\displaystyle N_{L_{m,P}/F_P}(\mu'_m) =
(\prod_{i=1}^{m-1}\lambda'_{i})^{-1}$.
Then $(\mu'_1, \mu'_2, \dots , \mu'_m)
\in T_{L/F}(F_{P})$. Now the result follows from
Lemma \ref{surjective_for_general_extensions}.

\end{proof}

\begin{cor}
\label{surjectivity_P_to_branch_multinorm}

Let $K$ be a complete discretely valued field
with residue field $\kappa$
and $F$ be
the function field of a curve over $K$.
Assume that $\kappa$ is algebraically closed.
Assume that $L_i/F, 1\leq i \leq m,$ are
finite Galois extensions of degree $n_i$, where $n_i$ is a
natural number not divisible by $\text{char}(\kappa)$.
Let $T_{L/F}$ denote the multinorm torus associated to
the extensions $L_i/F$.
Let $\XX$ be a regular proper model of $F$
such that the union of $ram_{\XX}(L_i/F)$ and
the reduced special fibre $X$ is a union of
regular curves with normal crossings.
Let $(P,U)$ be a branch.\\

Then the natural map
$$T_{L/F}(F_{P}) \rightarrow T_{L/F}(F_{P,U})/R$$
is surjective.

\end{cor}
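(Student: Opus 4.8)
The plan is to obtain the corollary as an immediate consequence of Lemma \ref{branch_to_P_multinorm_general}. That lemma already performs the entire multinorm bookkeeping: given an element of $T_{L/F}(F_{P,U})$, one corrects each component so that its norm has the standard shape $u_i\pi_1^{s_i}\pi_2^{t_i}$ up to an $n$-th power (by \cite[Lemma 4.7]{mishra2021local}), descends these standard norms to $F_P$ (by \cite[Theorem 4.9]{mishra2021local}), and reassembles via Lemma \ref{surjective_for_general_extensions}. The only hypothesis of Lemma \ref{branch_to_P_multinorm_general} left to verify is that for each $i$ the natural map $T_{L_i/F}(F_P)\to T_{L_i/F}(F_{P,U})/R$ is surjective, i.e.\ the single-extension case. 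So the whole task reduces to proving: for a single finite Galois extension $L_i/F$ of degree $n_i$ prime to $\text{char}(\kappa)$, with $\kappa$ algebraically closed, the map $T_{L_i/F}(F_P)\to T_{L_i/F}(F_{P,U})/R$ is onto.

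To handle the single-extension case I would fix $i$ and put $M:=L_i\otimes_F F_{P,U}$. If $M$ is not a field it is a product of copies of one Galois extension of $F_{P,U}$, and Proposition \ref{only_nonisomorphic_fields_matter_multinorm_mod_R} reduces us to the case where $M$ is a field (in the totally split case the torus base-changed to $F_{P,U}$ is rational, so the target is $\{1\}$ and there is nothing to prove). Now I would exploit the local structure: $F_{P,U}$ is a complete discretely valued field whose residue field $\kappa(U)_P$ is again complete discretely valued with algebraically closed residue field. Consequently every finite extension of $F_{P,U}$ of degree prime to $\text{char}(\kappa)$ is abelian, and $M$ is the compositum of its maximal unramified subextension $M^{\mathrm{nr}}/F_{P,U}$, which is cyclic (coming from a cyclic extension of $\kappa(U)_P$), with a tamely and totally ramified cyclic Kummer extension. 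In particular, if $M/F_{P,U}$ is itself cyclic then $T_{L_i/F}(F_{P,U})/R=\{1\}$ by Lemma \ref{R_trivial_elements_cyclic_norm_one_tori} and we are done at once.

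For the remaining abelian, possibly non-cyclic, $M/F_{P,U}$ I would run the argument used in Lemma \ref{at_least_one_unramified_case_surjectivity_eta_to_branch_multinorm}, now in the ``$P$ to branch'' direction: starting from $\mu\in M^{\times}$ with $N_{M/F_{P,U}}(\mu)=1$, decompose $\mu$ along the uniformizer of the tamely ramified part, absorb the ramified contribution into an $R$-trivial element provided by Lemma \ref{R_trivial_elements}, reduce $\mu$ modulo $n_i$-th powers to a unit of $M^{\mathrm{nr}}$ by Hensel's lemma, match its residue by a norm from the cyclic residue extension of $M^{\mathrm{nr}}$ over $\kappa(U)_P$ using that $\kappa(U)_P$ is a $C_1$ field (so its norm maps are surjective on units, exactly as in the cited lemma), and finally descend the surviving standard element to $F_P$ via \cite[Lemma 4.7]{mishra2021local} and \cite[Theorem 4.9]{mishra2021local}. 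Alternatively, this purely local single-extension surjectivity over a model with algebraically closed residue field is already contained in the norm one torus analysis of \cite{mishra2021local} and may simply be quoted. Either way, the passage from the single-extension case to the multinorm torus is Lemma \ref{branch_to_P_multinorm_general} and requires no new input; I expect establishing (or locating) the single-extension surjectivity to be the only real obstacle.
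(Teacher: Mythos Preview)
Your overall strategy matches the paper's: reduce to Lemma \ref{branch_to_P_multinorm_general}, so that the only thing to check is surjectivity of $T_{L_i/F}(F_P)\to T_{L_i/F}(F_{P,U})/R$ for each single extension $L_i/F$. For that single-extension step, however, the paper is far more direct than your primary route. It simply invokes \cite[Lemma 6.1]{mishra2021local}, which (under the hypothesis that $\kappa$ is algebraically closed) computes the target group explicitly: $T_{L_i/F}(F_{P,U})/R=\langle \rho_{n_i}\rangle$, the cyclic group generated by a primitive $n_i$-th root of unity. Since $\rho_{n_i}\in F\subseteq F_P$ and $N_{L_i/F}(\rho_{n_i})=\rho_{n_i}^{n_i}=1$, one has $\rho_{n_i}\in T_{L_i/F}(F)$, so surjectivity is immediate---even from $T_{L_i/F}(F)$, not just from $T_{L_i/F}(F_P)$. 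Your ``alternative'' of quoting the norm one torus analysis of \cite{mishra2021local} is therefore exactly what the paper does, and is the right choice; the hands-on decomposition into unramified and totally ramified pieces, the $C_1$ argument, and the Hensel bookkeeping you sketch are all subsumed in that cited lemma and need not be reproduced here.
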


\begin{proof}

Since $\kappa$ is algebraically closed, 
by \cite[Lemma 6.1]{mishra2021local} we have$\colon$
$T_{L_{i}/F}(F_{P,U})/R = \langle \rho_{n_i} \rangle $
for all $i; 1 \leq i \leq m$, where $\rho_{n_i}$
is a primitive $n_i^{\rm{th}}$ root of unity
in $F.$
Thus the natural maps
$$ T_{L_i/F}(F) \rightarrow T_{L_i/F}(F_{P,U})/R $$
are surjective for $1 \leq i \leq m$ since
$\rho_{n_i} \in  T_{L_i/F}(F)$. Thus, the
result follows from Lemma \ref{branch_to_P_multinorm_general}.

\end{proof}

\begin{theorem}
\label{Sha_dvr_vanishes_for_multinorm}
Let $K$ be a complete discretely valued field
with residue field $\kappa$ algebraically closed.
Let $F$ be the function field of a curve over $K$. 
Let $\XX$ be a regular proper model of $F$
such that the reduced special fibre $X$ is a union of
regular curves with normal crossings.
Let $L_i/F, 1\leq i \leq m,$ be
quadratic extensions.
Assume that $\text{char}(\kappa) \neq 2$.
Let $\displaystyle L = \prod_{i=1}^{m} L_i$.
If the graph associated to $\XX$ is a tree
then the local-global principle holds for the multinorm torus $T_{L/F}$
with respect to discrete valuations.
\end{theorem}

\begin{proof} 
By \cite[p-193]{lipman1975introduction},
there exists a  sequence of blow-ups $\XX_0 \to \XX$
centered at closed points of $\XX$
 such that the union of $ram_{\XX}(L/F)$
and the reduced special fibre $X_0$ of $\XX_0$
is a union of regular curves with normal crossings.
Now by Theorem \ref{union_of_Sha_X_equals_Sha_dvr_multinorm},
it is enough to show that for any blowup $\mathscr{Y}$
of $\XX_0$, $\Sh_{\mathscr{Y},Y}(F, T_{L/F}) = 0$, where $Y$ is the closed fiber of $\mathscr{Y}$. This follows from Proposition \ref{eta_P_implies_ShaX_zero_multinorm}, whose surjectivity hypotheses respectively follows from Lemma \ref{F_to_F_Henselian} (noting that the valuation ring of $F_{\eta, h}$ is excellent since $K$ is complete discretely valued field; see \cite[Scholie 7.8.3(ii)]{grothendieck1965elements}), Lemma \ref{surjectivity_eta_to_branch_multinorm}, and Corollary \ref{surjectivity_P_to_branch_multinorm}.
\end{proof}

The above statement can be generalized as follows$\colon$\\

\begin{theorem}
\label{Sha_dvr_vanishes_for_multinorm_quadratic_general}

Let $K$ be a complete discretely  valued field with residue field $\kappa$.
Assume that $\kappa$ is algebraically closed. 
Let $F$ be function field of a curve over $K$. 
Let $\XX$ be a regular proper model of $F$
such the reduced special fibre $X$ is a union of
regular curves with normal crossings.
Let $L_0$ be a quadratic extension of $F$.
For $i= 1, 2, \dots, m$, let $L_i$ be a Galois extension of
$F$ of degree $n_i$ with Galois group $G_i$.
Assume that for $i$, $1\leq i \leq m$, $4$ does not divide $n_i$.
Assume that $2$ and $n_i$ are not divisible by \text{char}$(\kappa)$.
Let $\displaystyle L = \prod^{m}_{i=0} L_i.$
If the graph associated to $\XX$ is a tree then the local-global principle holds for $T_{L/F}.$

\end{theorem}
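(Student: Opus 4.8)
The plan is to split according to the parity of the $n_i$; the key realisation is that the statement has real content only when every $n_i$ is even. I use the isomorphism $H^{1}(F,T_{L/F})\simeq F^{\times}/\prod_{i=0}^{m}N_{L_i/F}(L_i^{\times})$ induced by the exact sequence $1\to T_{L/F}\to\prod_{i=0}^{m}R_{L_i/F}\mathbb{G}_m\to\mathbb{G}_m\to 1$, and I abbreviate $\mathrm{Nm}(L)\colonequals\prod_{i=0}^{m}N_{L_i/F}(L_i^{\times})$, a subgroup of $F^{\times}$; the local-global principle for $T_{L/F}$ asserts precisely that any $\alpha\in F^{\times}$ which is a product of norms from $L\otimes_{F}F_{\nu}$ for every $\nu\in\Omega_{F}$ already lies in $\mathrm{Nm}(L)$. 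Suppose first that some $n_{j}$, $1\le j\le m$, is odd. Then there is nothing to prove: for any $\beta\in F^{\times}$ one has $\beta^{2}=N_{L_{0}/F}(\beta)\in N_{L_{0}/F}(L_{0}^{\times})$ and $\beta^{n_{j}}=N_{L_{j}/F}(\beta)\in N_{L_{j}/F}(L_{j}^{\times})$, and since $\gcd(2,n_{j})=1$ and $\mathrm{Nm}(L)$ is a group this forces $\beta\in\mathrm{Nm}(L)$; hence $\mathrm{Nm}(L)=F^{\times}$ and even $H^{1}(F,T_{L/F})=0$.

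So assume from now on that every $n_{i}$ ($1\le i\le m$) is even. By hypothesis each $G_{i}$ has a subgroup $H_{i}$ of index two, and since $4\nmid n_{i}$ the integer $r_{i}\colonequals n_{i}/2$ is odd; put $M_{i}\colonequals L_{i}^{H_{i}}$, a quadratic extension of $F$ with $F\subseteq M_{i}\subseteq L_{i}$ and $[L_{i}:M_{i}]=r_{i}$, and set $L'\colonequals L_{0}\times\prod_{i=1}^{m}M_{i}$, a product of quadratic (hence separable, as $\text{char}(\kappa)\neq 2$) extensions of $F$. Let $\alpha\in F^{\times}$ be a product of norms from $L\otimes_{F}F_{\nu}$ for every $\nu$. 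By transitivity of the norm through the towers $F\subseteq M_{i}\otimes_{F}F_{\nu}\subseteq L_{i}\otimes_{F}F_{\nu}$, every product of norms from $L\otimes_{F}F_{\nu}$ is a product of norms from $L'\otimes_{F}F_{\nu}$ --- here it is crucial that \emph{all} of $L_{1},\dots,L_{m}$ have even degree, so that no factor of $L$ is left without a quadratic subextension to absorb it --- and therefore $\alpha$ is a product of norms from $L'\otimes_{F}F_{\nu}$ for every $\nu$. Applying Theorem~\ref{Sha_dvr_vanishes_for_multinorm} to the product $L'$ of quadratic extensions (the hypotheses being met: $\kappa$ algebraically closed, $\text{char}(\kappa)\neq 2$, and the graph of $F$ a tree; repeated factors among $L_{0},M_{1},\dots,M_{m}$ cause no trouble, see Proposition~\ref{only_nonisomorphic_fields_matter_multinorm_mod_R}) we get $\alpha\in\mathrm{Nm}(L')$, say $\alpha=N_{L_{0}/F}(c_{0})\prod_{i=1}^{m}N_{M_{i}/F}(c_{i})$ with $c_{0}\in L_{0}^{\times}$, $c_{i}\in M_{i}^{\times}$.

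It remains to play off a square against an odd number. Let $N\colonequals\prod_{i=1}^{m}r_{i}$, an odd integer. Then $\alpha^{N}\in\mathrm{Nm}(L)$: indeed $N_{L_{0}/F}(c_{0})^{N}=N_{L_{0}/F}(c_{0}^{N})\in N_{L_{0}/F}(L_{0}^{\times})$, while for $1\le i\le m$, since $c_{i}^{N/r_{i}}\in M_{i}$ and $[L_{i}:M_{i}]=r_{i}$ one has $N_{L_{i}/M_{i}}(c_{i}^{N/r_{i}})=(c_{i}^{N/r_{i}})^{r_{i}}=c_{i}^{N}$, so $N_{M_{i}/F}(c_{i})^{N}=N_{M_{i}/F}(c_{i}^{N})=N_{M_{i}/F}\bigl(N_{L_{i}/M_{i}}(c_{i}^{N/r_{i}})\bigr)=N_{L_{i}/F}(c_{i}^{N/r_{i}})\in N_{L_{i}/F}(L_{i}^{\times})$. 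On the other hand $\alpha^{2}=N_{L_{0}/F}(\alpha)\in N_{L_{0}/F}(L_{0}^{\times})\subseteq\mathrm{Nm}(L)$, because $[L_{0}:F]=2$ and $\alpha\in F^{\times}$. Since $\gcd(2,N)=1$ and $\mathrm{Nm}(L)$ is a subgroup, writing $2a+Nb=1$ with $a,b$ integers yields $\alpha=(\alpha^{2})^{a}(\alpha^{N})^{b}\in\mathrm{Nm}(L)$. This completes the proof.

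The whole argument is a bookkeeping reduction; all the substance is contained in Theorem~\ref{Sha_dvr_vanishes_for_multinorm}. The one genuinely structural point is the dichotomy at the outset --- noticing that an odd-degree factor, together with the quadratic $L_{0}$, makes $F^{\times}/\mathrm{Nm}(L)$ collapse by a coprimality argument, which is exactly what licenses the assumption (in the remaining case) that every $G_{i}$ carries the index-two subgroup used to produce $M_{i}$. Beyond that, the only thing to be careful about is that the reduction from $L$ to $L'$ genuinely loses nothing at the level of products of local norms, which is just transitivity of the norm in the towers $F\subseteq M_{i}\subseteq L_{i}$.
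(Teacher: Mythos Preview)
Your proof is correct and follows the same strategy as the paper's: reduce to the all-even case, pass to the product of quadratic subextensions via the index-two subgroups, invoke Theorem~\ref{Sha_dvr_vanishes_for_multinorm}, and finish with a coprimality argument between an odd power and $\alpha^{2}$. One small difference worth noting: for the step $\alpha^{2}\in\mathrm{Nm}(L)$ the paper introduces auxiliary order-two subgroups $I_{i}\le G_{i}$ and their fixed fields $L''_{i}$, whereas you observe directly that $\alpha^{2}=N_{L_{0}/F}(\alpha)$ using the quadratic factor $L_{0}$ already present---your route is shorter and avoids the extra construction.
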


\begin{proof} If $2$ does not divide $n_i$ for some $i$ then local-global principle holds by Observation \ref{coprime_degrees}.
So we can assume that $n_i = 2 r_i$ for
some odd positive integers $r_i$ for $i$, $1 \leq i \leq m$. Then by Lemma \ref{2m}, there exists an index two normal subgroup $H_i$ of $G_i$ for all $i$. \\

Let $L'_{i}$ be the fixed fields of $H_i$.
Assume that $\alpha \in F^{\times}$ is a norm from
$L \otimes_F F_{\nu}$ to $F_{\nu}$ for all $\nu \in \Omega_F$.
Then $\alpha$ is also a norm from
$(L_0 \times L'_{1} \times L'_2  \times \cdots \times L'_m) \otimes_F F_{\nu} $ to $F_{\nu}$
for all $\nu \in \Omega_F$.
By Theorem \ref{Sha_dvr_vanishes_for_multinorm},
$\alpha$ is a norm from
$L_0 \times L'_1 \times L'_2 \times \cdots L'_m$ to $F$.
Let $r = \mathsf{lcm}(r_1, r_2, \dots, r_m)$ and $s_i=r/r_i$ for $i;$ $1\leq i\leq m$.
We have $\alpha = N_{L_0/F}(a_0) \displaystyle \prod^{m}_{i=1} N_{L'_i/F}(a_i)$
for some $a_0\in L^{\times}$ and $a_i \in L^{'\times}_i;$ $1\leq i\leq m$.
Then $\alpha^r= [N_{L_0/F}(a_0) \displaystyle \prod^{m}_{i=1} N_{L'_i/F}(a_i)]^r= N_{L_0/F}(a^r_0) \displaystyle \prod^{m}_{i=1}N_{L_i/F}(a^{s_i}_i).$
Thus $\alpha^{r}$ is a norm from $L$ to $F$.\\

Now let $E_i$ be an order two subgroup of $G_i$ for $i$, $1 \leq i \leq m$.
Let $L^{\prime \prime}_{i}$ be the fixed fields of $E_i$.
Assume that $\alpha \in F^{\times}$ is a norm from
$L \otimes_F F_{\nu}$ to $F_{\nu}$ for all $\nu \in \Omega_F$. Since $[L^{\prime \prime}_{i}:F]=r_i$ are odd, by Observation \ref{coprime_degrees} we get that $\alpha$ is a norm from
$F \times L^{\prime \prime}_{1} \times L^{\prime \prime}_2  \times \cdots \times L^{\prime \prime}_m$
to $F$.
Hence $\alpha^2$ is a norm from $L$ to $F$. Since $r$ and $2$ are coprime and
$\alpha^{r}, \alpha^2$ are products of norms
from $L$ to $F$, we conclude that $\alpha$ is a norm from $L$ to $F$.
\end{proof} 

\textbf{Acknowledgements}:
The author thanks Prof. Suresh Venapally for various helpful discussions during this work and Prof. Parimala Raman for constant encouragement and her interest in this work. The author is very thankful to the anonymous referees for their comments and suggestions, due to which the exposition has improved significantly. The author thanks Prof. Preeti Raman and IIT Bombay for their support.

\bibliography{Multinorms_25May2023}

\bibliographystyle{amsalpha}

\end{document}